\title[Affine representability results in $\aone$--homotopy theory II]{Affine representability results in $\aone$--homotopy theory II: principal bundles and homogeneous spaces}
\author{Aravind Asok}
\address{Department of Mathematics, University of Southern California, Los Angeles, CA 90089-2532, United States}
\email{asok@usc.edu}
\author{Marc Hoyois}
\address{Department of Mathematics, Massachusetts Institute of Technology, Cambridge, MA 02139-4307, United States}
\email{hoyois@mit.edu}
\author{Matthias Wendt}
\address{Fakult\"at f\"ur Mathematik, Universit\"at Duisburg-Essen, Thea-Leymann-Strasse 9, 45127 Essen, Germany}
\email{matthias.wendt@uni-due.de}
\newcommand{\tensor}{\otimes}
\newcommand{\colim}{\operatorname{colim}}
\newcommand{\holim}{\operatorname{holim}}
\newcommand{\Spec}{\operatorname{Spec}}
\newcommand{\isomto}{{\stackrel{\sim}{\;\longrightarrow\;}}}
\newcommand{\isomt}{{\stackrel{{\scriptscriptstyle{\sim}}}{\;\rightarrow\;}}}
\newcommand{\sma}{{\scriptstyle{\wedge}}}
\renewcommand{\O}{{\mathcal O}}
\renewcommand{\hom}{\operatorname{Hom}}
\renewcommand{\Z}{{\mathbb Z}}
\newcommand{\A}{{\mathbb A}}
\newcommand{\I}{{\mathrm I}}
\newcommand{\aone}{{\mathbb A}^1}
\newcommand{\pone}{{\mathbb P}^1}
\newcommand{\gm}[1]{{{\mathbf G}_{m}^{#1}}}
\newcommand{\ho}[1]{\mathscr{H}({#1})}
\newcommand{\fppf}{\operatorname{fppf}}
\newcommand{\et}{\mathrm{\acute et}}
\newcommand{\Nis}{\operatorname{Nis}}
\newcommand{\Zar}{\operatorname{Zar}} 
\newcommand{\Sm}{\mathrm{Sm}}
\newcommand{\F}{{\mathscr F}}
\newcommand{\Sing}{\operatorname{Sing}}
\newcommand{\Singaone}{\operatorname{Sing}^{\aone}\!\!}
\newcommand{\aff}{\mathit{aff}}
\theoremstyle{plain}
\newtheorem{thm}{Theorem}[subsection]
\newtheorem{lem}[thm]{Lemma}
\newtheorem{cor}[thm]{Corollary}
\newtheorem{prop}[thm]{Proposition}
\newtheorem*{claim*}{Claim}  
\newtheorem*{thm*}{Theorem}
\newtheorem*{problem*}{Problem}
\newtheorem{thmintro}{Theorem}
\theoremstyle{definition}
\newtheorem{defn}[thm]{Definition}
\newtheorem{rem}[thm]{Remark}
\newtheorem{remintro}[thmintro]{Remark}
\newtheorem{ex}[thm]{Example}
\numberwithin{equation}{section}
\begin{document}

\begin{abstract}
We establish a relative version of the abstract ``affine representability'' theorem in ${\mathbb A}^1$--homotopy theory from Part I of this paper.  We then prove some ${\mathbb A}^1$--invariance statements for generically trivial torsors under isotropic reductive groups over infinite fields analogous to the Bass--Quillen conjecture for vector bundles.  Putting these ingredients together, we deduce representability theorems for generically trivial torsors under isotropic reductive groups and for associated homogeneous spaces in ${\mathbb A}^1$--homotopy theory.
\end{abstract}

\maketitle

\section{Introduction}
Suppose $k$ is a fixed commutative unital base ring, and write $\ho{k}$ for the Morel--Voevodsky $\aone$--homotopy category over $k$ \cite{MV}.  The category $\ho{k}$ is constructed as a certain localization of the category of simplicial presheaves on $\Sm_k$, the category of smooth $k$--schemes.  Write $\Sm_k^{\aff}$ for the subcategory of $\Sm_k$ consisting of affine schemes.  If $\mathscr{X}$ is a simplicial presheaf on $\Sm_k$, by an ``affine representability'' result for $\mathscr{X}$, we will mean, roughly, a description of the presheaf on $\Sm_k^{\aff}$ defined by $U \mapsto [U,\mathscr{X}]_{\aone} := \hom_{\ho{k}}(U,X)$.

Here is a flavor of the description we provide: if $\mathscr{X}$ is a simplicial presheaf on $\Sm_k$, then for any $U \in \Sm_k^{\aff}$ one can consider the simplicial set $\Singaone \mathscr{X}(U)$ \cite[p. 87]{MV}.  The $0$--simplices of this simplicial set are morphisms $U \to \mathscr{X}$ and the $1$--simplices are ``naive'' or ``elementary'' $\aone$--homotopies $U \times \aone \to \mathscr{X}$.  The assignment $U \mapsto \pi_0(\Singaone \mathscr{X}(U))$ defines a presheaf $\pi_0(\Singaone \mathscr{X})$ of ``naive'' $\aone$--homotopy classes of maps $U \to \mathscr{X}$.  In \cite{PartI}, we gave conditions that allowed us to identify $\pi_0(\Singaone \mathscr{X})(U) \cong [U,\mathscr{X}]_{\aone}$, i.e., under which ``naive'' $\aone$--homotopy classes coincide with ``true'' $\aone$--homotopy classes.

In \cite[Theorem 1]{PartI}, building on results of M Schlichting \cite[Theorems 6.15 and 6.22]{Schlichting}, we simplified and generalized F Morel's affine representability result for vector bundles; we encourage the reader to consult the introduction of \cite{PartI} for a more detailed discussion of these points.  Our goal in this paper is to further extend the scope of these affine representability results in $\aone$--homotopy theory.  For example, the following result provides a generalization of the representability result from vector bundles to torsors under suitable reductive group schemes (the description in terms of naive homotopy classes is hidden here).

\begin{thmintro}[See Theorem~\ref{thm:isotropictorsors}]
\label{thmintro:isotropictorsors}
Suppose $k$ is an infinite field, and $G$ is an isotropic reductive $k$--group (see \textup{Definition~\ref{defn:isotropic}}).  For every smooth affine $k$--scheme $X$, there is a bijection
\[
H^1_{\Nis}(X,G) \cong [X,BG]_{\aone}
\]
that is functorial in $X$.
\end{thmintro}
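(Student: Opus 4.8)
\emph{Proof proposal.}
The plan is to deduce the theorem from the abstract affine representability theorem of \cite{PartI} (in the relative form established in this paper), applied to the Nisnevich classifying space $B_{\Nis}G$ of $G$ --- the Nisnevich sheafification of the simplicial bar construction, equivalently the stack $U \mapsto \mathrm{Tors}_G(U)$ of $G$-torsors. Two elementary facts get us started: $\pi_0(B_{\Nis}G(U)) = H^1_{\Nis}(U,G)$ for every $U \in \Sm_k$, and a standard manipulation identifies $[X,BG]_{\aone}$ with $\pi_0((\Laone B_{\Nis}G)(X))$ for $X$ smooth affine (using that $X$ is cofibrant and that the $\aone$-localization computes the mapping space out of $X$ in $\ho{k}$). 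Thus it suffices to prove that, for $X$ smooth affine, the canonical map $\pi_0(B_{\Nis}G(X)) \to \pi_0((\Laone B_{\Nis}G)(X))$ is a bijection; producing precisely such identifications --- between $\pi_0$ of a presheaf of spaces and $\pi_0$ of its $\aone$-localization, over affine inputs --- is exactly what the abstract theorem is built to do, and functoriality in $X$ is automatic from the construction.

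The one substantive hypothesis the abstract theorem needs in order to apply to $B_{\Nis}G$ is the \emph{affine $\aone$-invariance of $H^1_{\Nis}(-,G)$}: that for every smooth affine $X$ the projection $p \colon X\times\aone\to X$ induces a bijection $H^1_{\Nis}(X,G)\isomto H^1_{\Nis}(X\times\aone,G)$. (The remaining hypotheses --- affine Nisnevich excision and a finitariness condition --- hold on the nose, since $G$-torsors form a Nisnevich stack and $G$ is of finite presentation over $k$.) Granting this, the theorem yields that $\Singaone B_{\Nis}G$ is already $\aone$-local on $\Sm_k^{\aff}$, so that $[X,BG]_{\aone} = \pi_0((\Singaone B_{\Nis}G)(X))$ for $X$ smooth affine; and the same affine $\aone$-invariance forces the two restriction maps $B_{\Nis}G(X\times\aone)\to B_{\Nis}G(X)$ along the rational points $0,1$ of $\aone$, which are both sections of $p$, to induce the same map on $\pi_0$ --- both are inverse to the bijection $p^*$ of the previous sentence --- so that $\pi_0((\Singaone B_{\Nis}G)(X)) = H^1_{\Nis}(X,G)$ with no further identifications. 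Combining these two equalities proves the theorem, and everything thus reduces to the $\aone$-invariance claim.

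To prove affine $\aone$-invariance of $H^1_{\Nis}(-,G)$ I would argue in two steps. \emph{Step 1: reduction to generically trivial torsors.} For $X$ smooth affine over $k$, every class in $H^1_{\Nis}(X,G)$ is generically trivial, because its restriction to a generic point $\Spec k(X_i)$ of a connected component lies in $H^1_{\Nis}(\Spec k(X_i),G)$, which is trivial since every Nisnevich cover of the spectrum of a field splits; the same remark applies to $X\times\aone$. It therefore suffices to establish $\aone$-invariance for the subpresheaf of generically trivial torsors. \emph{Step 2: the Bass--Quillen-type input.} Here one invokes the $\aone$-invariance results of this paper for generically trivial torsors under isotropic reductive groups over infinite fields --- the ``torsor analogue of Lindel's theorem'' --- which assert precisely that a generically trivial $G$-torsor on $X\times\aone$ is extended from $X$; an evident induction then also covers $X\times\aone^n$, as needed for the $\aone$-localization. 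I expect Step 2 to be the main obstacle: the passage through the abstract machinery of \cite{PartI} and the reduction of Step 1 are formal, whereas the homotopy invariance of generically trivial torsors is a genuinely hard theorem resting on the structure theory of isotropic reductive group schemes --- parabolic subgroups, elementary subgroups, and their associated homotopy-invariance properties --- and it is precisely there that the hypotheses ``$k$ infinite'' and ``$G$ isotropic'' are indispensable.
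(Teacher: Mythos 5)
Your proposal is correct and follows essentially the same route as the paper: the general affine representability theorem applied to $B_{\Nis}G$ (this is Theorem~\ref{thm:Gtorsors}, resting on affine Nisnevich excision for the torsor stack and the identification $\pi_0(B_{\Nis}G)\cong H^1_{\Nis}(-,G)$) reduces everything to affine $\aone$--invariance of $H^1_{\Nis}(-,G)$, which is exactly the paper's Theorem~\ref{thm:hoinvisotropic} for isotropic reductive groups over infinite fields. Your identification of that homotopy invariance statement as the genuinely hard input, and of where the hypotheses ``$k$ infinite'' and ``$G$ isotropic'' enter, matches the paper's own decomposition of the proof.
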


\begin{remintro}
Theorem~\ref{thmintro:isotropictorsors} is essentially the strongest possible representability statement for which one could hope.  First, one cannot expect the functor ``isomorphism classes of Nisnevich locally trivial $G$--torsors'' to be representable on $\ho{k}$ in general.  Indeed, if we do not restrict attention to the category $\Sm_k^{\aff}$, then this functor need not even be $\aone$--invariant (see, e.g., Ramanathan \cite{Ramanathan} for a study of failure of homotopy invariance in case $X = \pone$ or the introduction to \cite{PartI} for other ways in which $\aone$--invariance can fail).  Second, at least if $k$ is infinite and perfect, then the hypothesis that $G$ is isotropic cannot be weakened.  Indeed, if $G$ is not an isotropic reductive $k$--group in the sense mentioned above, then even affine representability for $G$--torsors fails in general; see Remark~\ref{rem:hoinvfailure} and Balwe--Sawant \cite[Theorem 1]{BalweSawantReductive} for more details.  We do not know if Theorem~\ref{thmintro:isotropictorsors} holds if $k$ is finite.
\end{remintro}

\begin{remintro}
\label{remintro:specialgroups}
It has been known for some time that an analogue of Morel's theorem should hold for torsors under groups like $SL_n$ and $Sp_{2n}$ (for $SL_n$ this is mentioned, e.g., in Asok--Fasel \cite[Theorem 4.2]{AsokFaselSpheres}).  Schlichting observed \cite[Remark 6.23]{Schlichting} that his techniques also apply to torsors under groups like $SL_n$ or $Sp_{2n}$.  Combined with the results of \cite{PartI}, one therefore expects affine representability results to hold for torsors under such groups in the same generality as for vector bundles.  For completeness, we include such results here as Theorems~\ref{thm:orientedrepresentability} and \ref{thm:symplecticrepresentability}.
\end{remintro}

We also establish affine representability results for various homogeneous spaces under reductive groups.

\begin{thmintro}[See Theorem~\ref{thm:parabolics}]
\label{thmintro:parabolics}
Suppose $k$ is an infinite field, and $G$ is an isotropic reductive $k$--group.  If $P \subset G$ is a parabolic $k$--subgroup possessing an isotropic Levi $k$--subgroup, then for any smooth affine $k$--scheme $X$, there is a bijection
\[
\pi_0(\Singaone G/P)(X) \isomto [X,G/P]_{\aone}
\]
that is functorial in $X$.
\end{thmintro}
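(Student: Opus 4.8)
The plan is to reduce Theorem~\ref{thmintro:parabolics} to Theorem~\ref{thmintro:isotropictorsors} together with the relative affine representability theorem from Part I, by analyzing the fibration sequence attached to the $P$--torsor $G \to G/P$. First I would recall that $G \to G/P$ is a Zariski-locally trivial $P$--torsor (parabolic subgroups are "special" in the sense that their torsors are Zariski-locally trivial, this being a classical fact), so that on $\ho{k}$ one obtains a fiber sequence $G/P \to BP \to BG$, or equivalently $P \to G \to G/P$. The key point is that $P$ admits a Levi decomposition $P = L \ltimes R_u(P)$ with $R_u(P)$ split unipotent, hence $\aone$--contractible; thus the inclusion $L \hookrightarrow P$ is an $\aone$--weak equivalence and $BL \to BP$ is likewise an $\aone$--weak equivalence. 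Since $L$ is assumed isotropic, Theorem~\ref{thmintro:isotropictorsors} applies to both $G$ and $L$: we get $H^1_{\Nis}(X, L) \cong [X, BL]_{\aone} \cong [X, BP]_{\aone}$ and $H^1_{\Nis}(X,G) \cong [X,BG]_{\aone}$, functorially in smooth affine $X$.

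Next I would set up the comparison at the level of naive homotopy classes. One has the evident map of presheaves of simplicial sets $\Singaone(G/P) \to \Singaone(BP)/\Singaone(BG)$, or more directly, one identifies $\pi_0(\Singaone G/P)(X)$ with the set of $G$--torsors over $X$ equipped with a reduction of structure group to $P$, modulo naive homotopy. Concretely, a map $X \to G/P$ corresponds to a trivialized-along-the-generic-fiber... more precisely, since $G/P$ is the quotient, a section of $(G/P)_X \to X$ is the same as a $P$--structure on the trivial $G$--torsor, and naive $\aone$--homotopies correspond to naive homotopies of such data. The strategy is then to invoke the relative affine representability machine of Part I applied to the pair $(BP \to BG)$: the fiber is $G/P$, and the theorem will identify $[X, G/P]_{\aone}$ with the homotopy fiber of $[X,BP]_{\aone} \to [X,BG]_{\aone}$ computed via naive homotopies, provided one verifies the requisite affine $\aone$--invariance and Nisnevich-descent hypotheses for the relevant functors — and these follow from the $\aone$--invariance statements for torsors under isotropic reductive (and their Levi) subgroups proved earlier in the paper, exactly as in the proof of Theorem~\ref{thmintro:isotropictorsors}.

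The main obstacle I anticipate is verifying the hypotheses of the relative representability theorem for the morphism $BP \to BG$ rather than for $B$ of a single group: one needs that the relevant simplicial presheaf (the homotopy fiber, modeled by $\Singaone G/P$ or an appropriate resolution) satisfies the affine Nisnevich-local and affine $\aone$--invariance conditions, and this requires knowing that $\aone$--invariance holds not just for $G$-- and $L$--torsors but compatibly, i.e., for the map "forget the $P$--reduction." Concretely, one must show that for $U$ smooth affine, every $P$--reduction of a trivial $G_{U\times\aone}$--torsor that is constant over $U \times \{0\}$ can be connected by a naive homotopy to a constant one — this is where the isotropy of $L$ and the contractibility of $R_u(P)$ enter decisively, via the $\aone$--invariance results of Section (the Bass--Quillen-type statements advertised in the abstract). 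Once this affine $\aone$--invariance for $P$--reductions is in hand, the identification $\pi_0(\Singaone G/P)(X) \isomto [X,G/P]_{\aone}$ follows formally from the relative affine representability theorem of Part I; functoriality in $X$ is immediate since every construction is natural in $X$.
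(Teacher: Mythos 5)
Your overall architecture (fiber sequence $G/P \to BP \to BG$ plus the relative representability theorem) is close in spirit to the paper, but there is a genuine gap at exactly the point you flag as ``the main obstacle,'' and it is not resolved by what you say. The relative theorem (Theorem~\ref{thm:fiberaffinerepresentability}, which is in the present paper, not Part I) applied to $G/P \to B_{\Nis}P \to B_{\Nis}G$ requires as a hypothesis that $\pi_0(B_{\Nis}P) = H^1_{\Nis}(-,P)$ be $\aone$--invariant on smooth affine schemes. This is a statement about honest Nisnevich-locally trivial $P$--torsors over $X\times\aone$ being extended, i.e., a statement \emph{prior} to $\aone$--localization. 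Your argument for it --- that $R_u(P)$ is $\aone$--contractible, so $L\hookrightarrow P$ and $BL\to BP$ are $\aone$--weak equivalences --- only yields identifications of $\aone$--homotopy classes such as $[X,BL]_{\aone}\cong[X,BP]_{\aone}$; it says nothing about $H^1_{\Nis}(X\times\aone,P)$ versus $H^1_{\Nis}(X,P)$ unless you already have a representability statement for $P$--torsors, which is what you are trying to prove. The homotopy-invariance results of Section~\ref{sec:groupstorsors} (Theorem~\ref{thm:hoinvisotropic}) apply only to reductive groups, so they do not cover the non-reductive group $P$, and your appeal to them for ``$P$--reductions'' is not justified. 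The gap is repairable --- one can prove directly that $H^1_{\Nis}(X,P)\cong H^1_{\Nis}(X,L)$ for $X$ smooth affine, using the filtration of the unipotent radical by vector groups (Lemma~\ref{lem:parabolics} and Conrad's results) and vanishing of $H^1$ of quasi-coherent sheaves on affines, and then invoke Theorem~\ref{thm:hoinvisotropic} for the isotropic Levi $L$ --- but you do not carry out this step, and without it the invocation of the relative theorem does not go through.

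For comparison, the paper sidesteps $P$--torsors entirely: it first applies the homogeneous-space representability theorem (Theorem~\ref{thm:homogeneousrep}) to the pair $L\subset G$, both of which are isotropic reductive so Theorem~\ref{thm:hoinvisotropic} supplies the needed $\aone$--invariance, concluding that $G/L$ is $\aone$--naive (Zariski-local triviality of $G\to G/L$ comes from Lemma~\ref{lem:parabolics}(ii) plus the infinitude of $k$). It then uses Lemma~\ref{lem:parabolics}(iii), which exhibits $G/L\to G/P$ as a composition of torsors under vector bundles, hence Zariski-locally an affine-space bundle, and transfers $\aone$--naivety along this map via Lemma~\ref{lem:affineinvarianceofsing}. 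That dévissage is precisely what replaces the missing $\aone$--invariance statement for $P$--torsors in your outline. (A minor further caution: your parenthetical claim that parabolic subgroups are ``special'' is not a classical fact as stated; what is true, and what the paper uses, is Zariski-local triviality of the specific torsors $G\to G/L$ and $G\to G/P$ over an infinite field, via translates of the big cell.)
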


\begin{remintro}
As suggested prior to the statement, we actually establish representability results with targets that are more general homogeneous spaces.  In this direction, observe that it is often possible to ``explicitly'' identify sets of naive homotopy classes and thus, via Theorem \ref{thmintro:parabolics} true $\aone$--homotopy classes. Barge and Lannes \cite[Chapter 4]{BargeLannes} provide such identifications in the case where the target is related to symmetric bilinear forms.  Cazanave \cite{Cazanave} provides such identifications in the case where the target is ${\mathbb P}^n$.  In addition, Fasel \cite[Theorem 2.1]{FaselUnimodular} gives such an identification in the case where the target is a Stiefel variety (various homogeneous spaces of $GL_n$).
\end{remintro}

Building on the ideas of Schlichting and Morel, the proofs of the results above are established using the framework developed in \cite{PartI}: affine representability follows from affine Nisnevich excision and affine homotopy invariance.  The restrictions on $k$ that appear in our results are imposed to guarantee that affine homotopy invariance holds for Nisnevich locally trivial torsors under $G$.

While affine homotopy invariance for vector bundles is precisely the Bass--Quillen conjecture (about which much is known), precise statements regarding affine homotopy invariance for torsors under other groups are harder to find in the literature (in part because such results are typically false for \'etale locally trivial torsors), but see Wendt \cite[Section 3]{WendtRationallyTrivial}.  The entirety of Section~\ref{sec:groupstorsors} is devoted to studying affine homotopy invariance for torsors under reductive group schemes over a rather general base.

Theorem~\ref{thmintro:isotropictorsors} is a straightforward consequence of our general representability result (see Theorem~\ref{thm:Gtorsors}) combined with affine homotopy invariance (see Theorem~\ref{thm:hoinvisotropic} for a precise statement of what we mean by this term).    Theorem~\ref{thmintro:parabolics} follows from Theorem~\ref{thm:homogeneousrep} and affine homotopy invariance for isotropic reductive $k$--groups by a reduction from $P$ to a Levi factor of $P$ (which by assumption is also an isotropic reductive $k$--group).  Again, for certain groups, significantly more general statements can be made; see Theorem \ref{thm:homogeneousspacesunderspecialgroups}.

Our techniques also allow us to establish significant generalizations (with simpler proofs) of some results of F Morel regarding when classifying spaces for split groups are $\aone$--local \cite[Theorems 1.3, 1.5 and A.2]{MFriedMil}.  While Morel deduces these results from strong $\aone$--invariance of non-stable $K_1$--functors, which he establishes by appeal to classical results regarding elementary matrices, we are, in sharp contrast, able to deduce such strong $\aone$--invariance statements as a direct consequence of our general representability result (see Theorem~\ref{thm:wha1invariance} for more details).  As another sample application of these results, we adapt some classical ideas of G\,W Whitehead \cite{Whitehead} to establish nilpotence results for non-stable $K_1$--functors (see Theorem~\ref{thm:nilpotentbyabelian}), along the lines of the results of Bak \cite{BakNilpotent} and Bak--Hazrat--Vavilov \cite{BakHazratVavilov}.  In particular, we are able to resolve \cite[Problem 6]{BakHazratVavilov} in a number of new situations (see Remark~\ref{rem:bhvproblem6} for more details).

The representability results for homogeneous spaces are relevant when applying the methods of obstruction theory to analyze algebraic classification problems. For example, if the base $k$ is a perfect field, the $\aone$--fibration sequence
\[
\mathbb{A}^n\setminus\{0\} \longrightarrow BGL_{n-1} \longrightarrow BGL_n
\]
was used by F Morel \cite[Chapter 8]{MField} to develop an obstruction-theoretic approach to answering the question of when a vector bundle over a smooth affine variety splits off a trivial rank $1$ summand; this approach was further developed by the first author and Fasel in \cite{AsokFaselThreefolds, AsokFaselA3minus0} to which we refer the interested reader for a more detailed discussion.  The results of this paper (specifically Theorem~\ref{thm:A1fibration}) open the possibility of studying such questions over more general base rings, e.g., $\Z$.

Our representability results also broaden the scope of geometric and algebraic applications of $\aone$--homotopy theory.  We mention a few such directions here (though we do not develop the applications).  First, Theorem \ref{thmintro:isotropictorsors} allows one to give explicit classifications of principal $G$--bundles on certain quadric hypersurfaces, see Asok--Fasel \cite{AsokFaselSpheres} and Asok--Doran--Fasel \cite{AsokDoranFasel}.  Theorems~\ref{thm:oddquadric} and \ref{thm:evenquadric} establish affine representability results for ``split'' quadric hypersurfaces.  The former result has relevance to questions regarding unimodular rows (see \cite{AsokFaselSpheres}).  Building on the ideas of Fasel \cite{FaselComplete}, affine representability results for even dimensional quadrics are a key tool in Asok--Fasel \cite{AsokFaselcohomotopy} to interpret Euler class groups \`a la Bhatwadekar--Sridharan in terms of $\aone$--homotopy theory.  In another direction, since the homogeneous space $G_2/SL_3$ is a $6$--dimensional ``split'' smooth affine quadric, we use our results in \cite{AHWOctonion} to study questions regarding reductions of structure group for ``generically trivial'' octonion algebras.  In algebraic terms this can be rephrased as follows: when is an octonion algebra a Zorn (``vector-matrix'') algebra (see, e.g., Springer and Veldkamp \cite[p. 19]{SpringerVeldkamp})?

\subsubsection*{Dependency of sections/prerequisites}
Section~\ref{sec:representability} is devoted to extending our results from \cite{PartI}; the proofs rely on ideas from {\em loc. cit}, which we will use rather freely together with some basic properties of torsors and homogeneous spaces collected in Sections~\ref{ss:torsorsrepresentable} and \ref{ss:applicationhomogeneousspaces}.  Section~\ref{sec:groupstorsors} is devoted to establishing affine homotopy invariance results for torsors under reductive groups.  The results of this section rely on the basic properties of torsors and homogeneous spaces recalled in Section~\ref{sec:representability} as well as the theory of (reductive) group schemes over a base; regarding the latter: we review some of the main definitions and basic properties, but we mainly provide pointers to the literature.  At the very end of Section~\ref{ss:homotopyinvariance} we also rely on the representability results from Section~\ref{sec:representability}.  Section~\ref{sec:applications} contains applications of our main results and thus relies on all of the preceding sections.  We refer the reader to the beginning of each section for a more detailed description of its contents.

\subsubsection*{Acknowledgements}
The authors would like to thank Brian Conrad for extremely helpful correspondence regarding \cite{Conrad} and \cite{ConradGabberPrasad}.  In particular, the proof of Lemma~\ref{lem:parabolics} in its current form was a product of these discussions.  The authors would also like to thank Chetan Balwe and Anand Sawant for helpful discussions of \cite{BalweSawant}, Philippe Gille and Anastasia Stavrova for helpful comments and corrections on a previous version of this paper, and Marco Schlichting and Jean Fasel for mentioning at some point the Zariski local triviality of the torsor appearing in the proof of Lemma \ref{lem:orthogonalquotient}.  Finally, this paper owes an intellectual debt to Marco Schlichting: even if it is obscured in references to \cite{PartI}, the ideas of \cite[\S 6]{Schlichting} served to focus our attention (for example, he established a representability result for special linear groups or symplectic groups \cite[Remark 6.23]{Schlichting}).

Aravind Asok was partially supported by National Science Foundation Award DMS-1254892. Marc Hoyois was partially supported by National Science Foundation Award DMS-1508096. Matthias Wendt was partially supported by EPSRC grant EP/M001113/1.

\subsubsection*{Preliminaries/Notation}
All rings considered in this paper will be assumed unital.  We use the symbol $S$ for a quasi-compact, quasi-separated base scheme, $\Sm_S$ for the category of finitely presented smooth $S$--schemes, and $\Sm_S^\aff\subset\Sm_S$ for the full subcategory of affine schemes (in the absolute sense). We also reuse some terminology and notation introduced in \cite{PartI}, e.g., the notion of affine Nisnevich excision \cite[Example 2.1.2 and Definition 3.2.1]{PartI}, the $t$--localization functor $R_t$ \cite[\S 3.1]{PartI}, the singular construction $\Sing^\I$ \cite[\S 4.1]{PartI}, etc.

\section{Some general representability results}
\label{sec:representability}
The goal of this section is to extend the affine representability results of \cite{PartI}.  In particular, Theorem~\ref{thm:fiberaffinerepresentability} provides a relative version of \cite[Theorem 5.1.3]{PartI}.  We then specialize this result to two cases of particular interest in Theorems~\ref{thm:Gtorsors} and \ref{thm:homogeneousrep}.

\subsection{Naive $\A^1$--homotopy classes}

Let $\mathscr F$ be a simplicial presheaf on $\Sm_S$. Given $X\in\Sm_S$, there is a canonical map
\begin{equation}\label{eqn:naiveVSgenuine}
\pi_0(\Singaone\mathscr F)(X) \to [X,\mathscr F]_{\A^1},
\end{equation}
where the right-hand side is the set of maps in the $\A^1$--homotopy category $\ho{S}$. The left-hand side is the set of \emph{naive} $\A^1$--homotopy classes of maps from $X$ to $\mathscr F$: it is the quotient of the set of maps $X\to\mathscr F$ by the equivalence relation generated by $\A^1$--homotopies.
For presheaves $\mathscr F$ of ``geometric origin'', such as representable presheaves, it is rare that~\eqref{eqn:naiveVSgenuine} is a bijection for all $X\in\Sm_S$ (this happens for example when $\mathscr F$ is represented by an $\aone$--rigid smooth scheme in the sense of Morel--Voevodsky \cite[\S 3 Example 2.4]{MV}, e.g., a smooth curve of genus $g > 0$ or an abelian variety). However, one of the main themes of this paper is that there are many examples of presheaves $\mathscr F$ such that~\eqref{eqn:naiveVSgenuine} is a bijection for every \emph{affine} $X$. We formalize this idea in the following definition.

\begin{defn}
\label{defn:aonenaive}
Let $\mathscr F$ be a simplicial presheaf on $\Sm_S$ and let $\tilde{\mathscr F}$ be a Nisnevich-local $\A^1$--invariant fibrant replacement of $\mathscr F$. Then there is a canonical map $\Singaone\mathscr F\to\tilde{\mathscr F}$, well-defined up to simplicial homotopy. We will say that $\mathscr F$ is \emph{$\A^1$--naive} if the map $\Singaone\mathscr F(X) \to \tilde{\mathscr F}(X)$ is a weak equivalence for every $X\in\Sm_S^\aff$.
\end{defn}

\begin{rem}
\label{rem:homotopygroups}
If $\mathscr F$ is $\A^1$--naive, then in particular~\eqref{eqn:naiveVSgenuine} is a bijection for every $X\in\Sm_S^\aff$. More generally, if $\mathscr F$ is $\A^1$--naive and pointed, then
\[
\pi_n(\Singaone\mathscr F)(X) \cong [S^n\wedge X_+,\mathscr F]_{\A^1,*}
\]
for every $X\in\Sm_S^\aff$ and $n\geq 0$.
\end{rem}

\begin{prop}\label{prop:A1naive}
If $\mathscr F$ is a simplicial presheaf on $\Sm_S$, then $\mathscr F$ is $\A^1$--naive if and only if $\Singaone\mathscr F$ satisfies affine Nisnevich excision (see \textup{\cite[\S2.1]{PartI}}). In that case, $R_{\Zar}\Singaone\mathscr F$ is Nisnevich-local and $\A^1$--invariant.
\end{prop}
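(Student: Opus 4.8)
The plan is to deduce both implications --- and the supplementary assertion --- from the affine-local excision formalism of \cite{PartI} together with two standard facts about the singular construction. For an arbitrary simplicial presheaf $\mathscr G$ on $\Sm_S$ I would invoke: \textbf{(i)} the canonical map $\mathscr G\to\Singaone\mathscr G$ is an $\A^1$--weak equivalence, and $\Singaone\mathscr G$ is $\A^1$--invariant \cite[\S 4.1]{PartI}; and \textbf{(ii)} if $\mathscr G$ satisfies affine Nisnevich excision, then $R_{\Zar}\mathscr G$ is Nisnevich-local, the map $\mathscr G(X)\to R_{\Zar}\mathscr G(X)$ is a weak equivalence for every $X\in\Sm_S^\aff$, and $R_{\Zar}\mathscr G$ inherits $\A^1$--invariance from $\mathscr G$ \cite[\S 2.1, \S 3.1]{PartI}. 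Fact \textbf{(ii)} is the affine avatar of the Morel--Voevodsky principle that Zariski descent plus Nisnevich excision forces Nisnevich descent; its last clause is a Mayer--Vietoris argument over affine covers, using that $\Sm_S^\aff$ generates the Nisnevich topos.

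For the forward implication, assume $\Singaone\mathscr F$ satisfies affine Nisnevich excision. Then \textbf{(i)} and \textbf{(ii)} applied to $\mathscr G=\Singaone\mathscr F$ show at once that $R_{\Zar}\Singaone\mathscr F$ is Nisnevich-local and $\A^1$--invariant (the supplementary claim), and that $\Singaone\mathscr F(X)\to R_{\Zar}\Singaone\mathscr F(X)$ is a weak equivalence for $X\in\Sm_S^\aff$. I would then note that the composite $\mathscr F\to\Singaone\mathscr F\to R_{\Zar}\Singaone\mathscr F$ is an $\A^1$--weak equivalence --- the first arrow by \textbf{(i)}, the second being a Zariski-local, hence $\A^1$--local, weak equivalence --- so that $R_{\Zar}\Singaone\mathscr F$, being $\A^1$--invariant and Nisnevich-local (and objectwise-fibrantly replaced if necessary, which alters none of these properties nor the values up to weak equivalence), is a Nisnevich-local $\A^1$--invariant fibrant replacement $\tilde{\mathscr F}$ of $\mathscr F$ in the sense of Definition~\ref{defn:aonenaive}, with $\Singaone\mathscr F\to\tilde{\mathscr F}$ the localization map. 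The weak equivalence on affines recorded above then says precisely that $\mathscr F$ is $\A^1$--naive; independence of this notion from the choice of $\tilde{\mathscr F}$ is automatic, since any two such replacements are linked by weak equivalences compatible up to homotopy with the maps from $\Singaone\mathscr F$.

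For the converse, assume $\mathscr F$ is $\A^1$--naive, so $\Singaone\mathscr F(X)\to\tilde{\mathscr F}(X)$ is a weak equivalence for every $X\in\Sm_S^\aff$. Since $\tilde{\mathscr F}$ is Nisnevich-local it carries every elementary Nisnevich square --- in particular every one all of whose vertices are affine --- to a homotopy cartesian square, so $\tilde{\mathscr F}$ satisfies affine Nisnevich excision. Affine Nisnevich excision constrains only the values of a presheaf on $\Sm_S^\aff$ and is formulated purely via homotopy cartesian squares, hence is invariant under objectwise weak equivalences of presheaves on $\Sm_S^\aff$; transporting it along $\Singaone\mathscr F|_{\Sm_S^\aff}\simeq\tilde{\mathscr F}|_{\Sm_S^\aff}$ yields affine Nisnevich excision for $\Singaone\mathscr F$.

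I expect the one genuinely nontrivial input to be \textbf{(ii)}: that \emph{affine} Nisnevich excision alone --- rather than Nisnevich descent over all of $\Sm_S$ --- already forces $R_{\Zar}\Singaone\mathscr F$ to be Nisnevich-local, to compute $\Singaone\mathscr F$ on affines, and to retain $\A^1$--invariance. This is exactly what the affine-excision machinery of \cite{PartI} is built to provide; once it is in hand, everything above is formal. The remaining subtlety --- and the reason the statement features $R_{\Zar}\Singaone\mathscr F$ rather than $\Singaone\mathscr F$ itself --- is simply that $\Singaone\mathscr F$, although $\A^1$--invariant, is typically neither Nisnevich-local nor Zariski-local on the nose.
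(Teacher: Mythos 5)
Your argument is correct and follows essentially the same route as the paper: one direction transports affine Nisnevich excision from the Nisnevich-local replacement $\tilde{\mathscr F}$ to $\Singaone\mathscr F$ along the objectwise equivalence on affines, and the other uses the affine-excision machinery of \cite{PartI} (that $R_{\Zar}$ of an affine-excisive, $\A^1$--invariant presheaf is Nisnevich-local, $\A^1$--invariant, and computes the same values on affines, i.e.\ Theorem 3.3.4 and Lemma 5.1.2 there) to identify $R_{\Zar}\Singaone\mathscr F$ with $\tilde{\mathscr F}$. The only difference is cosmetic: the paper cites these inputs precisely rather than sketching them, but your identification of the one nontrivial ingredient is exactly right.
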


\begin{proof}
	Let $\tilde{\mathscr F}$ be a Nisnevich-local $\A^1$--invariant replacement of $\mathscr F$.
	Suppose that $\mathscr F$ is $\A^1$--naive. Then the restriction of $\Singaone\mathscr F$ to $\Sm_S^\aff$ is (objectwise) weakly equivalent to $\tilde{\mathscr F}$, and hence it is Nisnevich-local. But this implies that $\Singaone\mathscr F$ satisfies affine Nisnevich excision, by \cite[Theorem 3.2.5]{PartI}.
	
	Conversely, suppose that $\Singaone\mathscr F$ satisfies affine Nisnevich excision. By \cite[Theorem 3.3.4]{PartI}, the canonical map
	\[
	\Singaone\mathscr F(X) \to R_{\Zar}\Singaone\mathscr F(X)
	\]
	is a weak equivalence for every $X\in\Sm_S^\aff$, and $R_{\Zar}\Singaone\mathscr F$ is Nisnevich-local. By \cite[Lemma 5.1.2]{PartI}, $R_{\Zar}\Singaone\mathscr F$ is also $\A^1$--invariant. Hence, $R_{\Zar}\Singaone\mathscr F\simeq\tilde{\mathscr F}$ and $\mathscr F$ is $\A^1$--naive.
\end{proof}

\subsection{The singular construction and homotopy fiber sequences}
The notion of representable interval object was formulated in \cite[Definition 4.1.1]{PartI}. By a homotopy fiber sequence of pointed simplicial presheaves, we mean a homotopy Cartesian square in which either the top-right or bottom-left corner is a point.

\begin{prop}
\label{prop:fiberSequence}
Let $\mathbf{C}$ be a small category and $\I$ a representable interval object in $\mathbf C$. Let
\[
\mathscr F \longrightarrow \mathscr G \longrightarrow \mathscr H
\]
be a homotopy fiber sequence of pointed simplicial presheaves on $\mathbf C$. If $\pi_0(\mathscr H)$ is $\I$--invariant, then
\[
\Sing^\I\mathscr F \longrightarrow \Sing^\I\mathscr G \longrightarrow \Sing^\I\mathscr H
\]
is a homotopy fiber sequence.
\end{prop}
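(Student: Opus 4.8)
The plan is to reduce the statement to a fact about diagonals of bisimplicial sets and then to invoke the Bousfield--Friedlander criterion for when the diagonal of a levelwise fiber sequence of bisimplicial sets is again a fiber sequence; the hypothesis that $\pi_0(\mathscr H)$ be $\I$--invariant will be used precisely to verify the $\pi_0$--condition in that criterion, while the rest of the argument is formal.

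First I would reduce to the case of a strict fibration. The functor $\Sing^{\I}$ preserves objectwise weak equivalences: for $X\in\mathbf{C}$ the simplicial set $\Sing^{\I}\mathscr F(X)$ is the diagonal of the bisimplicial set $(n,m)\mapsto\mathscr F(X\times\I^{n})_{m}$, and passage to the diagonal sends a levelwise weak equivalence of bisimplicial sets to a weak equivalence. Since homotopy fiber sequences of pointed simplicial presheaves for the objectwise structure are invariant under objectwise weak equivalence of diagrams, I may replace the given sequence by an objectwise-equivalent one in which $\mathscr H$ and $\mathscr G$ are objectwise fibrant, $\mathscr G\to\mathscr H$ is an objectwise Kan fibration, and $\mathscr F=\mathscr G\times_{\mathscr H}\{\ast\}$ is the strict fiber over the basepoint.

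Next I would use that $\Sing^{\I}$ preserves finite limits: for fixed $X$ the functor $\mathscr F\mapsto\bigl(n\mapsto\mathscr F(X\times\I^{n})\bigr)$ to simplicial objects in simplicial sets is a composite of evaluation functors with precomposition along the cosimplicial object $\I^{\bullet}$ defining $\Sing^{\I}$, hence preserves limits, and the diagonal functor --- being restriction along $\Delta\hookrightarrow\Delta\times\Delta$ --- preserves all limits. Thus $\Sing^{\I}\mathscr F$ is the strict fiber of $\Sing^{\I}\mathscr G\to\Sing^{\I}\mathscr H$, and it remains to show that for every $X$ the resulting strictly Cartesian square of simplicial sets, with the basepoint in one corner, is homotopy Cartesian. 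To do this I would fix $X$, let $V$, $W$, $Z$ denote the bisimplicial sets $n\mapsto\mathscr F(X\times\I^{n})$, $n\mapsto\mathscr G(X\times\I^{n})$, $n\mapsto\mathscr H(X\times\I^{n})$ --- so that, in each degree $n$, the map $W\to Z$ is a Kan fibration of Kan complexes with fiber $V$, and $\Sing^{\I}\mathscr F(X)=\diag(V)$, and likewise for $\mathscr G$ and $\mathscr H$ --- and apply the Bousfield--Friedlander theorem on homotopy Cartesian squares of bisimplicial sets. Its conclusion, that $\diag$ sends the square $(V,\ast,W,Z)$ to a homotopy Cartesian square, applies as soon as the simplicial set $n\mapsto\pi_0(Z_n)=\pi_0(\mathscr H)(X\times\I^{n})$ is constant and $Z$ satisfies the $\pi_\ast$--Kan condition. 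The constancy is exactly $\I$--invariance of $\pi_0(\mathscr H)$ --- an $\I$--invariant presheaf takes every structure map of $\I^{\bullet}$ to a bijection, so $n\mapsto\pi_0(\mathscr H)(X\times\I^{n})$ is the constant simplicial set on $\pi_0(\mathscr H)(X)$ --- and since in positive degrees $n\mapsto\pi_q(Z_n)$ is a simplicial group (for compatible basepoints, e.g.\ iterated degeneracies of a chosen vertex), this constancy also supplies the $\pi_\ast$--Kan condition. Hence $\Sing^{\I}\mathscr F\to\Sing^{\I}\mathscr G\to\Sing^{\I}\mathscr H$ is a homotopy fiber sequence.

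The main obstacle is the careful invocation of the Bousfield--Friedlander comparison, in particular the verification of the $\pi_\ast$--Kan condition for $Z$: one must control the a priori nonconstant systems of groups $\pi_q(Z_n,-)$ over the vertices of $Z_n$, and it is the constancy of $n\mapsto\pi_0(Z_n)$ --- that is, $\I$--invariance of $\pi_0(\mathscr H)$ --- that makes this harmless. Should one wish to sidestep the local-system bookkeeping entirely, an alternative is to split $Z$, $W$ and $V$ over the (now constant) set of components $\pi_0(\mathscr H)(X)$ and argue one summand at a time, where the base is then componentwise connected.
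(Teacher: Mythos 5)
Your proposal is correct and takes essentially the same route as the paper: the paper's proof forms exactly the same degreewise homotopy Cartesian square of bisimplicial sets $\mathscr F(X\times\I^\bullet)\to\mathscr G(X\times\I^\bullet)\to\mathscr H(X\times\I^\bullet)$, uses $\I$--invariance of $\pi_0(\mathscr H)$ to see that $\pi_0\mathscr H(X\times\I^\bullet)$ is a constant simplicial set, and then invokes \cite[Lemma 4.2.1]{PartI}, which is precisely the Bousfield--Friedlander-type diagonal statement you re-derive. Your extra steps (strictifying to a levelwise Kan fibration, and handling the $\pi_\ast$--Kan condition by splitting over the constant set of components rather than via the informal ``simplicial group of $\pi_q$'s'' remark) just unwind that citation and are the standard way to verify its hypotheses.
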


\begin{proof}
	For $X\in \mathbf{C}$, consider the square of bisimplicial sets
	\[
	\xymatrix{
	\mathscr F(X\times \I^\bullet) \ar[r] \ar[d] & \mathscr G(X \times \I^{\bullet}) \ar[d] \\
	\ast \ar[r] & \mathscr H(X\times \I^\bullet)
	}
	\]
	which is degreewise homotopy Cartesian. Since $\pi_0(\mathscr H)$ is $\I$--invariant, the simplicial set $\pi_0\mathscr H(X\times \I^\bullet)$ is constant. By \cite[Lemma 4.2.1]{PartI}, the diagonal of this square is homotopy Cartesian, i.e.,
\[
\Sing^\I\mathscr F(X) \longrightarrow \Sing^\I\mathscr G(X) \longrightarrow \Sing^\I\mathscr H(X)
\]
is a homotopy fiber sequence.
\end{proof}

\begin{cor}
\label{cor:SingOmega}
Let $\mathbf{C}$ be a small category and $\I$ a representable interval object in $\mathbf C$. If $\F$ is a pointed simplicial presheaf on $\mathbf{C}$ such that $\pi_0(\F)$ is $\I$--invariant, then the canonical map
	\[
	\Sing^\I\mathbf{R}\Omega \F \longrightarrow \mathbf{R}\Omega \Sing^\I \F
	\]
	is a weak equivalence.
\end{cor}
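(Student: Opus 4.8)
The plan is to realize the (derived) loop space via a homotopy fiber sequence and feed it into Proposition~\ref{prop:fiberSequence}. For any pointed simplicial presheaf $\F$ on $\mathbf C$ there is a homotopy fiber sequence
\[
\mathbf{R}\Omega\F \longrightarrow \ast \longrightarrow \F,
\]
i.e.\ a homotopy Cartesian square with the point $\ast$ in the top-right corner, essentially by definition of $\mathbf{R}\Omega$ (choose an objectwise fibrant replacement $\F\to\F'$ and take the strict fiber of a path fibration over $\F'$). Since $\pi_0(\F)$ is $\I$--invariant by hypothesis, Proposition~\ref{prop:fiberSequence} applies with $\mathscr H=\F$, $\mathscr G=\ast$, $\mathscr F=\mathbf{R}\Omega\F$, and yields that
\[
\Sing^\I\mathbf{R}\Omega\F \longrightarrow \Sing^\I\ast \longrightarrow \Sing^\I\F
\]
is again a homotopy fiber sequence.

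Next I would note that $\Sing^\I\ast$ is again a point: for $X\in\mathbf C$ the bisimplicial set $\ast(X\times\I^\bullet)$ is the constant point, whose diagonal is a point. Hence the displayed homotopy fiber sequence exhibits $\Sing^\I\mathbf{R}\Omega\F$ as a homotopy fiber of $\ast\to\Sing^\I\F$, that is, it produces a weak equivalence $\Sing^\I\mathbf{R}\Omega\F\simeq\mathbf{R}\Omega\Sing^\I\F$.

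The only remaining point is the bookkeeping needed to see that this weak equivalence is (simplicially homotopic to) the \emph{canonical} comparison map, and I expect this to be the most delicate — though entirely routine — part. The functor $\Sing^\I$ is the composite of precomposition with $X\mapsto X\times\I^\bullet$ and the diagonal of bisimplicial objects; both factors preserve all limits, so $\Sing^\I$ preserves finite products and the terminal object, and it carries objectwise weak equivalences to objectwise weak equivalences because $\diag$ sends degreewise weak equivalences of bisimplicial sets to weak equivalences. Consequently, applying $\Sing^\I$ to the pullback square defining $\mathbf{R}\Omega\F=\Omega\F'$ produces a square over $\Sing^\I\F$ whose total space is $\Sing^\I\ast=\ast$ and whose fiber is $\Sing^\I\Omega\F'=\Omega\,\Sing^\I\F'$; comparing this with the square computing $\mathbf{R}\Omega\Sing^\I\F$ and using that $\Sing^\I\F\to\Sing^\I\F'$ is an objectwise weak equivalence, the induced map on homotopy fibers is, by construction, the canonical map $\Sing^\I\mathbf{R}\Omega\F\to\mathbf{R}\Omega\Sing^\I\F$. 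It is a weak equivalence by the previous paragraph. No genuine obstacle is anticipated: the substantive content is already packaged in Proposition~\ref{prop:fiberSequence}, and the $\I$--invariance of $\pi_0(\F)$ is exactly the hypothesis it requires.
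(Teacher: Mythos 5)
Your argument is exactly the paper's: apply Proposition~\ref{prop:fiberSequence} to the homotopy fiber sequence $\mathbf{R}\Omega\F \to \ast \to \F$, using the $\I$--invariance of $\pi_0(\F)$, and observe that $\Sing^\I$ of a point is a point. The additional bookkeeping you include to identify the resulting equivalence with the canonical comparison map is fine but is left implicit in the paper's one-line proof.
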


\begin{proof}
This follows from Proposition~\ref{prop:fiberSequence} applied to the homotopy fiber sequence $\mathbf R\Omega(\F) \to * \to \F$.
\end{proof}

\begin{lem}
\label{lem:excisionandfibersequences}
Suppose $\mathbf{C}$ is a small category with an initial object and let $P$ be a cd-structure on $\mathbf{C}$.  If $\mathbf{J}$ is a small diagram and $F: \mathbf{J} \to \mathrm{sPre}(\mathbf{C})$ is a functor such that $F(j)$ satisfies $P$--excision for every $j \in J$, then $\holim_{\mathbf{J}} F$ satisfies $P$--excision as well.
\end{lem}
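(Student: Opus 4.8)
The plan is to reduce the statement to the fact that $P$--excision is a condition expressible in terms of homotopy limits, which are computed objectwise for presheaves and commute with one another. Recall that a cd-structure $P$ on $\mathbf{C}$ consists of a collection of commutative squares in $\mathbf{C}$; a simplicial presheaf $\mathscr{G}$ satisfies $P$--excision if, for every distinguished square
\[
\xymatrix{
W \ar[r] \ar[d] & V \ar[d] \\
U \ar[r] & T
}
\]
in $P$, the resulting square of simplicial sets
\[
\xymatrix{
\mathscr{G}(T) \ar[r] \ar[d] & \mathscr{G}(V) \ar[d] \\
\mathscr{G}(U) \ar[r] & \mathscr{G}(W)
}
\]
is homotopy Cartesian, together with the requirement that $\mathscr{G}$ carries the initial object of $\mathbf{C}$ to a point. (The latter is the ``$\emptyset$--part'' of the cd-structure; it is harmless here since a homotopy limit of points is a point.) The key observation is that ``this square of spaces is homotopy Cartesian'' says precisely that the canonical map $\mathscr{G}(T) \to \mathscr{G}(U) \times^h_{\mathscr{G}(W)} \mathscr{G}(V)$ is a weak equivalence, i.e., it is itself the assertion that a certain map is an equivalence, where both source and target are built from the values of $\mathscr{G}$ by finite homotopy limits.

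First I would fix a distinguished square in $P$ with corners $W, V, U, T$ as above, and set $\mathscr{G} := \holim_{\mathbf{J}} F$. Since homotopy limits of simplicial presheaves are computed objectwise, for each object $C \in \mathbf{C}$ one has $\mathscr{G}(C) \simeq \holim_{j \in \mathbf{J}} F(j)(C)$. Next I would use the fact that homotopy limits commute with homotopy limits: the homotopy pullback $\mathscr{G}(U) \times^h_{\mathscr{G}(W)} \mathscr{G}(V)$ is a homotopy limit over the cospan category, so it may be rewritten as
\[
\holim_{j \in \mathbf{J}} \Bigl( F(j)(U) \times^h_{F(j)(W)} F(j)(V) \Bigr).
\]
By hypothesis each $F(j)$ satisfies $P$--excision, so for every $j$ the map $F(j)(T) \to F(j)(U) \times^h_{F(j)(W)} F(j)(V)$ is a weak equivalence. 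Since $\holim_{\mathbf{J}}$ preserves objectwise weak equivalences, taking $\holim_{\mathbf{J}}$ of this natural transformation yields a weak equivalence $\mathscr{G}(T) \to \mathscr{G}(U) \times^h_{\mathscr{G}(W)} \mathscr{G}(V)$, which is exactly the statement that the square of values of $\mathscr{G}$ on the distinguished square is homotopy Cartesian. Finally, applying $F(-)$ to the initial object of $\mathbf{C}$ gives a diagram of points, whose homotopy limit is a point, so $\mathscr{G}$ also satisfies the $\emptyset$--part of $P$--excision; hence $\mathscr{G}$ satisfies $P$--excision.

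The one point requiring a little care — and the closest thing to an obstacle — is the interchange of homotopy limits: one must know that $\holim_{\mathbf{J}}$ commutes with finite homotopy limits (here, homotopy pullbacks), which is the standard Fubini-type statement for homotopy limits over a product (or over $\mathbf{J} \times \Lambda$ for the cospan shape $\Lambda$), and that these homotopy limits may be modeled so that the comparison is manifestly natural in $\mathbf{J}$. In the simplicial model-category setting this is routine: one may rectify everything strictly using a Reedy-fibrant or injective-fibrant model, compute all homotopy limits as strict limits of fibrant diagrams, and then the interchange is a genuine isomorphism of limits; alternatively one invokes the general fact that $\holim$ over any fixed small category is a right Quillen (hence homotopy-limit-preserving) functor. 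I would include a one-sentence pointer to this standard fact rather than belabor it. No issue arises from the cd-structure being nontrivial or from $\mathbf{C}$ having an initial object beyond what is noted above.
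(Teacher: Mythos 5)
Your proof is correct and is exactly the argument the paper has in mind: the paper's proof is the one-line remark that this follows from commutation of homotopy limits, and your write-up simply spells out that reduction (objectwise computation of $\holim$, Fubini for homotopy limits applied to the homotopy pullback defining excision, preservation of objectwise weak equivalences, and the trivial check at the initial object).
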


\begin{proof}
This is a straightforward consequence of commutation of homotopy limits.
\end{proof}

\begin{thm}
\label{thm:fiberaffinerepresentability}
Suppose
\[
\mathscr F \longrightarrow \mathscr G \longrightarrow \mathscr H
\]
is a homotopy fiber sequence of pointed simplicial presheaves on $\Sm_S$.  If the following conditions hold:
\begin{enumerate}
	\item[(i)] $\mathscr G$ and $\mathscr H$ satisfy affine Nisnevich excision, and 
	\item[(ii)] $\pi_0(\mathscr G)$ and $\pi_0(\mathscr H)$ are $\aone$--invariant on affine schemes, 
\end{enumerate}
then $\mathscr F$ is $\A^1$--naive.
\end{thm}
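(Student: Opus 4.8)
The plan is to reduce, via Proposition~\ref{prop:A1naive}, to showing that $\Singaone\mathscr F$ satisfies affine Nisnevich excision, and to assemble this from the corresponding statements for $\mathscr G$ and $\mathscr H$ together with Proposition~\ref{prop:fiberSequence} and Lemma~\ref{lem:excisionandfibersequences}.

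The main step is to treat the special case in which $\mathscr H$ is a point; I would establish that a pointed simplicial presheaf $\mathscr P$ on $\Sm_S$ which satisfies affine Nisnevich excision and whose $\pi_0$ is $\aone$--invariant on affine schemes is $\aone$--naive, as follows. Recall that $\Singaone\mathscr P(X)$ is the diagonal of the bisimplicial set $[n]\mapsto\mathscr P(X\times\Delta^n_{\aone})$, where $\Delta^n_{\aone}$ is the algebraic $n$--simplex over $S$ (smooth, affine, and of finite presentation as an $S$--scheme), and the $\Delta^n_{\aone}$ assemble into a cosimplicial object of $\Sm_S^\aff$. Because $\Delta^n_{\aone}$ is affine and smooth of finite presentation over $S$, base change along $(-)\times_S\Delta^n_{\aone}$ preserves the class of affine Nisnevich distinguished squares (using that a scheme affine over an affine scheme is affine to see that the corners stay in $\Sm_S^\aff$); hence for each $n$ the presheaf $X\mapsto\mathscr P(X\times\Delta^n_{\aone})$ satisfies affine Nisnevich excision. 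Thus for any affine Nisnevich distinguished square $Q$, the assignment $[n]\mapsto\mathscr P(Q\times\Delta^n_{\aone})$ is a simplicial object of homotopy Cartesian squares of simplicial sets whose diagonal is $\Singaone\mathscr P(Q)$. Since $\pi_0(\mathscr P)$ is $\aone$--invariant on affine schemes, for each corner $T$ of $Q$ the simplicial set $[n]\mapsto\pi_0\mathscr P(T\times\Delta^n_{\aone})$ is constant (the usual consequence of $\aone$--invariance of a presheaf of sets, all face and degeneracy maps being induced by the morphisms of $\Sm_S^\aff$ defining $\Delta^\bullet_{\aone}$). By \cite[Lemma 4.2.1]{PartI} — exactly as it is used in the proof of Proposition~\ref{prop:fiberSequence} — a degreewise homotopy Cartesian square of bisimplicial sets with this $\pi_0$--constancy has homotopy Cartesian diagonal, so $\Singaone\mathscr P(Q)$ is homotopy Cartesian. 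Since $\Singaone\mathscr P(\emptyset)\simeq\ast$ as well, $\Singaone\mathscr P$ satisfies affine Nisnevich excision, and so $\mathscr P$ is $\aone$--naive by Proposition~\ref{prop:A1naive}.

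By hypotheses (i) and (ii), the argument of the previous paragraph shows that $\Singaone\mathscr G$ and $\Singaone\mathscr H$ satisfy affine Nisnevich excision. Next, running the argument of Proposition~\ref{prop:fiberSequence} at an affine scheme $X$ — where $[n]\mapsto\pi_0\mathscr H(X\times\Delta^n_{\aone})$ is constant by hypothesis (ii), since each $X\times\Delta^n_{\aone}$ is affine — shows that
\[
\Singaone\mathscr F(X)\longrightarrow\Singaone\mathscr G(X)\longrightarrow\Singaone\mathscr H(X)
\]
is a homotopy fiber sequence for every $X\in\Sm_S^\aff$. In other words, the natural comparison map exhibits the restriction of $\Singaone\mathscr F$ to $\Sm_S^\aff$ as objectwise weakly equivalent to the homotopy fiber of $\Singaone\mathscr G\to\Singaone\mathscr H$. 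By Lemma~\ref{lem:excisionandfibersequences}, applied with $\mathbf J$ the cospan category and $F=(\Singaone\mathscr G\to\Singaone\mathscr H\leftarrow\ast)$, that homotopy fiber satisfies affine Nisnevich excision; since affine Nisnevich excision only involves values on affine schemes and is invariant under objectwise weak equivalence of such values, $\Singaone\mathscr F$ satisfies affine Nisnevich excision. By Proposition~\ref{prop:A1naive}, $\mathscr F$ is $\aone$--naive.

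The crux — and essentially the only place that requires care — is the passage, in the main step, from the degreewise homotopy Cartesian squares $\mathscr P(Q\times\Delta^n_{\aone})$ to a homotopy Cartesian diagonal: this is a Bousfield--Friedlander phenomenon governed by \cite[Lemma 4.2.1]{PartI}, and one must match its $\pi_0$--hypothesis correctly. The subtle point is that only $\aone$--invariance of $\pi_0$ on \emph{affine} schemes is available; this forces the entire argument to be carried out at affine schemes, which is legitimate precisely because both ``affine Nisnevich excision'' and ``$\aone$--naive'' are conditions that only see affine schemes, and it is what converts that hypothesis into the constancy of the simplicial sets $[n]\mapsto\pi_0\mathscr P(T\times\Delta^n_{\aone})$. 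The remaining inputs — stability of the distinguished squares under $(-)\times_S\Delta^n_{\aone}$ and the commutation of homotopy limits underlying Lemma~\ref{lem:excisionandfibersequences} — are routine.
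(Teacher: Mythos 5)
Your proof is correct and takes essentially the same route as the paper: the paper likewise applies Proposition~\ref{prop:fiberSequence} on affines to get the fiber sequence of singular constructions, invokes affine Nisnevich excision for $\Singaone\mathscr G$ and $\Singaone\mathscr H$, closes under homotopy limits via Lemma~\ref{lem:excisionandfibersequences}, and concludes with Proposition~\ref{prop:A1naive}. The only difference is that your ``main step'' re-derives, via \cite[Lemma 4.2.1]{PartI}, the statement the paper simply cites as \cite[Corollary 4.2.4]{PartI}, namely that $\Singaone$ preserves affine Nisnevich excision when $\pi_0$ is $\aone$--invariant on affines.
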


\begin{proof}
By Proposition~\ref{prop:fiberSequence}, for every $U\in\Sm_S^\aff$, the sequence
\begin{equation}
\label{eqn:fibersequence}
\Singaone\mathscr F(U) \longrightarrow \Singaone\mathscr G(U) \longrightarrow \Singaone\mathscr H(U)
\end{equation}
is a homotopy fiber sequence. By \cite[Corollary 4.2.4]{PartI}, both $\Singaone\mathscr G$ and $\Singaone\mathscr H$ satisfy affine Nisnevich excision. Hence by Lemma~\ref{lem:excisionandfibersequences}, $\Singaone\mathscr F$ also satisfies affine Nisnevich excision. In other words, by Proposition~\ref{prop:A1naive}, $\mathscr F$ is $\A^1$--naive.
\end{proof}

The following result is not used in the sequel, but it fits the theme of this section. It is a variant of a result of Morel \cite[Theorem 6.53]{MField} that holds over arbitrary base schemes.

\begin{thm}\label{thm:A1fibration}
Let $\mathscr F\to\mathscr G\to \mathscr H$ be a homotopy fiber sequence of pointed simplicial presheaves on $\Sm_S$. Assume that:
\begin{enumerate}
	\item[(i)] $\mathscr H$ satisfies affine Nisnevich excision;
	\item[(ii)] $\pi_0(\mathscr H)$ is $\aone$--invariant on affine schemes.
\end{enumerate}
Then $\mathscr F\to\mathscr G\to \mathscr H$ is an $\A^1$--fiber sequence, i.e., it remains a homotopy fiber sequence after taking Nisnevich-local $\A^1$--invariant replacements.
\end{thm}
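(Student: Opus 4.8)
Following the strategy of \cite{PartI}, the plan is to realize the Nisnevich-local $\A^1$--invariant replacement by an explicit functor built from $\Singaone$ and sheafification, and then to check that this functor preserves the given homotopy fiber sequence, the crucial point being that hypotheses (i) and (ii) make Proposition~\ref{prop:fiberSequence} applicable over the site $\Sm_S^\aff$. \emph{Step 1 (an explicit model for $\A^1$--localization).} First I would record that for any simplicial presheaf $\mathscr A$ on $\Sm_S$ the presheaf $R_{\Zar}\Singaone R_{\Nis}\mathscr A$ is a Nisnevich-local $\A^1$--invariant replacement of $\mathscr A$, and that its restriction to $\Sm_S^\aff$ is objectwise weakly equivalent to that of $\Singaone R_{\Nis}\mathscr A$. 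Indeed $R_{\Nis}\mathscr A$ is Nisnevich-local, hence satisfies affine Nisnevich excision, hence $\Singaone R_{\Nis}\mathscr A$ satisfies affine Nisnevich excision by \cite[Corollary 4.2.4]{PartI}; so $R_{\Nis}\mathscr A$ is $\A^1$--naive and, by Proposition~\ref{prop:A1naive} together with \cite[Theorem 3.3.4]{PartI}, the presheaf $R_{\Zar}\Singaone R_{\Nis}\mathscr A$ is Nisnevich-local, $\A^1$--invariant, and agrees with $\Singaone R_{\Nis}\mathscr A$ on affine schemes. As the maps $\mathscr A\to R_{\Nis}\mathscr A$, $R_{\Nis}\mathscr A\to \Singaone R_{\Nis}\mathscr A$, and $\Singaone R_{\Nis}\mathscr A\to R_{\Zar}\Singaone R_{\Nis}\mathscr A$ are each $\A^1$--weak equivalences, the composite $\mathscr A\to R_{\Zar}\Singaone R_{\Nis}\mathscr A$ exhibits the claimed replacement.

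\emph{Step 2 (applying Proposition~\ref{prop:fiberSequence} over $\Sm_S^\aff$).} Since Nisnevich sheafification is left exact, $R_{\Nis}\mathscr F\to R_{\Nis}\mathscr G\to R_{\Nis}\mathscr H$ is again a homotopy fiber sequence, now of Nisnevich-local presheaves on $\Sm_S$, and restricting to $\Sm_S^\aff$ preserves this (homotopy fiber sequences are detected objectwise). Now $\A^1$ is a representable interval object in $\Sm_S^\aff$, and $\pi_0$ of $R_{\Nis}\mathscr H$ is $\A^1$--invariant on $\Sm_S^\aff$: by (i) the map $\mathscr H\to R_{\Nis}\mathscr H$ is a weak equivalence on affine schemes (by \cite[Theorems 3.2.5 and 3.3.4]{PartI}, since $R_{\Zar}\mathscr H$ is then Nisnevich-local), so this $\pi_0$ coincides with $\pi_0(\mathscr H)|_{\Sm_S^\aff}$, which is $\A^1$--invariant by (ii). Hence Proposition~\ref{prop:fiberSequence}, applied with $\mathbf C=\Sm_S^\aff$ and $\I=\A^1$, produces a homotopy fiber sequence
\[
\Singaone R_{\Nis}\mathscr F|_{\Sm_S^\aff}\longrightarrow \Singaone R_{\Nis}\mathscr G|_{\Sm_S^\aff}\longrightarrow \Singaone R_{\Nis}\mathscr H|_{\Sm_S^\aff}
\]
of simplicial presheaves on $\Sm_S^\aff$.

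\emph{Step 3 (passing back to $\Sm_S$).} By Step 1 the presheaves $R_{\Zar}\Singaone R_{\Nis}\mathscr F$, $R_{\Zar}\Singaone R_{\Nis}\mathscr G$, $R_{\Zar}\Singaone R_{\Nis}\mathscr H$ are Nisnevich-local on $\Sm_S$ and restrict on $\Sm_S^\aff$ to the three terms just displayed. Let $\mathscr P$ be the homotopy fiber of $R_{\Zar}\Singaone R_{\Nis}\mathscr G\to R_{\Zar}\Singaone R_{\Nis}\mathscr H$; it is Nisnevich-local, since homotopy limits preserve Nisnevich-locality, and the canonical map $R_{\Zar}\Singaone R_{\Nis}\mathscr F\to \mathscr P$ is a weak equivalence on $\Sm_S^\aff$ by Step 2. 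Since $\Sm_S^\aff$ is a site of definition for the Nisnevich topos on $\Sm_S$ — every finitely presented smooth $S$--scheme is Zariski-, hence Nisnevich-, locally a member of $\Sm_S^\aff$ — a map of Nisnevich-local simplicial presheaves that is a weak equivalence on $\Sm_S^\aff$ is a weak equivalence; therefore $R_{\Zar}\Singaone R_{\Nis}\mathscr F\to R_{\Zar}\Singaone R_{\Nis}\mathscr G\to R_{\Zar}\Singaone R_{\Nis}\mathscr H$ is a homotopy fiber sequence on $\Sm_S$. By Step 1 this is the sequence $\mathscr F\to\mathscr G\to\mathscr H$ after Nisnevich-local $\A^1$--invariant replacement, which is what we wanted.

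The formal properties invoked in Steps 1 and 3 — left-exactness of sheafification, the comparison between the affine and the full Nisnevich site, and the manipulations of the localization functors — are routine within the framework of \cite{PartI}. The one place where the hypotheses are genuinely used, and hence the expected main obstacle, is the verification in Step 2 that $\pi_0(R_{\Nis}\mathscr H)$ is $\A^1$--invariant on affine schemes: this consumes both (i) and (ii) and is precisely what allows Proposition~\ref{prop:fiberSequence} to be applied after Nisnevich localization. Note that nothing is assumed about $\mathscr G$, and that, in contrast with Theorem~\ref{thm:fiberaffinerepresentability}, one does not conclude that $\mathscr F$ is $\A^1$--naive.
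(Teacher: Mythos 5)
There is a genuine gap, and it is in Step 1. The claim that for an \emph{arbitrary} simplicial presheaf $\mathscr A$ the object $R_{\Zar}\Singaone R_{\Nis}\mathscr A$ is a Nisnevich-local $\A^1$--invariant replacement of $\mathscr A$ is false: it would mean that $\A^1$--localization is always computed by a single application of the singular construction after Nisnevich localization. The step that breaks is the appeal to \cite[Corollary 4.2.4]{PartI}: that result does not assert that $\Singaone$ preserves affine Nisnevich excision unconditionally; it requires in addition that $\pi_0$ of the input be $\A^1$--invariant on affine schemes. (This is precisely why hypothesis (ii) of Theorem~\ref{thm:fiberaffinerepresentability} is imposed on \emph{both} $\mathscr G$ and $\mathscr H$, and why \cite[Theorem 5.1.3]{PartI} carries the same pair of hypotheses.) A counterexample to your Step 1 inside the scope of this paper: take $\mathscr A=G$ an anisotropic reductive group over an infinite perfect field. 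Then $G$ already satisfies Nisnevich excision, so your argument would make $G$ $\A^1$--naive and $R_{\Zar}\Singaone G$ Nisnevich-local and $\A^1$--invariant; but $R_{\Nis}\Singaone G$ is not $\A^1$--invariant for anisotropic $G$ by Balwe--Sawant (see Remark~\ref{rem:hoinvfailure}). Since you invoke Step 1 for $\mathscr F$ and $\mathscr G$, about which nothing is assumed, Steps 1 and 3 together prove only that
\[
R_{\Zar}\Singaone R_{\Nis}\mathscr F \longrightarrow R_{\Zar}\Singaone R_{\Nis}\mathscr G \longrightarrow R_{\Zar}\Singaone R_{\Nis}\mathscr H
\]
is a homotopy fiber sequence; the first two terms are not identified with $\A^1$--localizations, so the theorem does not follow.

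The missing idea is the one the paper uses to close exactly this gap: the terms of the displayed sequence are $\A^1$--weakly equivalent to $\mathscr F,\mathscr G,\mathscr H$ (no locality or invariance needed for the fiber and total space), only the \emph{base} is known to be Nisnevich-local and $\A^1$--invariant (by \cite[Theorem 5.1.3]{PartI}, which uses hypotheses (i) and (ii)); one then invokes right properness of the Morel--Voevodsky model structure \cite[\S2 Theorem 2.7]{MV}, which implies that any homotopy fiber sequence whose base is Nisnevich-local and $\A^1$--invariant is an $\A^1$--fiber sequence. Your Steps 2--3 are, modulo this, close in spirit to the paper's argument (which applies Proposition~\ref{prop:fiberSequence} directly to the given sequence over $\Sm_S^\aff$ and then uses the equivalence $R_{\Zar}\simeq \mathbf Ri_*R_{\Zar}i^*$ from \cite[Lemma 3.3.2]{PartI} instead of your sheafification-and-gluing); but without replacing Step 1 by the right-properness argument the proof does not go through.
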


\begin{proof}
	As in Theorem~\ref{thm:fiberaffinerepresentability}, the sequence~\eqref{eqn:fibersequence} is a homotopy fiber sequence for every $U\in\Sm_S^\aff$.
	Let $i^*$ be the restriction functor from $\Sm_S$ to $\Sm_S^\aff$ and $\mathbf Ri_*$ its derived right adjoint. By \cite[Lemma 3.3.2]{PartI}, there is a natural equivalence of functors $R_{\Zar}\simeq \mathbf Ri_*R_{\Zar}i^*$.	Since $\mathbf Ri_*$ and $R_{\Zar}$ preserve homotopy fiber sequences, we deduce that
	\[
	R_{\Zar}\Singaone\mathscr F \longrightarrow R_{\Zar}\Singaone\mathscr G \longrightarrow R_{\Zar}\Singaone\mathscr H
	\]
	is a homotopy fiber sequence. By \cite[Theorem 5.1.3]{PartI}, $R_{\Zar}\Singaone\mathscr H$ is Nisnevich-local and $\A^1$--invariant.
	But it follows from the right properness of the Morel--Voevodsky model structure \cite[\S2 Theorem 2.7]{MV} that every homotopy fiber sequence whose base is Nisnevich-local and $\A^1$--invariant is an $\A^1$--fiber sequence.
\end{proof}

\subsection{Application to torsors}
\label{ss:torsorsrepresentable}
In this subsection we specialize the general representability result of \cite[\S5.1]{PartI} to simplicial presheaves classifying $G$--torsors for some group $G$. We start by recalling some general facts about torsors.

\begin{defn}\label{def:torsor}
	Let $\mathbf C$ be a small category equipped with a Grothendieck topology $t$, let $G$ be a $t$--sheaf of groups on $\mathbf C$, and let $X\in\mathbf C$. A \emph{$G$--torsor over $X$} is a triple $(\mathscr P,\pi,a)$ where $\mathscr P$ is a $t$--sheaf on $\mathbf C$, $a\colon\mathscr P\times G\to\mathscr P$ is a right action of $G$ on $\mathscr P$, and $\pi\colon \mathscr P\to X$ is a morphism that is $G$--equivariant for the trivial $G$--action on $X$, such that:
	\begin{enumerate}
		\item[(i)] the morphism $\mathscr P\times G \to \mathscr P\times_X\mathscr P$ of components $\pi_1$ and $a$ is an isomorphism;
		\item[(ii)] $\pi$ is $t$--locally split, i.e., the collection of morphisms $U\to X$ in $\mathbf C$ such that $\mathscr P\times_XU\to U$ has a section is a $t$--covering sieve of $X$.
	\end{enumerate}
\end{defn}

The collection of $G$--torsors over various $X\in\mathbf C$ can be assembled into a category $\mathbf{Tors}_t(G)$ fibered in groupoids over $\mathbf C$. We write $B\mathbf{Tors}_t(G)$ for the simplicial presheaf whose value on $U\in\mathbf C$ is the nerve of the groupoid of sections of $\mathbf{Tors}_t(G)$ over $\mathbf{C}/U$ (this groupoid is canonically equivalent to the groupoid of $G$--torsors over $U$, but is strictly functorial in $U$, cf. Hollander \cite[\S3.3]{Hollander}).
It is well-known that $\mathbf{Tors}_t(G)$ is a stack for the topology $t$. As shown in \cite[Theorem 3.9]{Hollander}, this is equivalent to the statement that $B\mathbf{Tors}_t(G)$ satisfies $t$--descent.

We denote by $BG$ the pointed simplicial presheaf with $n$--simplices $G^n$ and with the usual face and degeneracy maps, and we let
\[
B_tG:=R_t BG
\]
be its $t$--local replacement (see \cite[\S 3]{PartI}). There is a morphism $BG\to B\mathbf{Tors}_t(G)$ sending the unique vertex of $BG(U)$ to the trivial $G$--torsor over $U$. Since $B\mathbf{Tors}_t(G)$ is $t$--local, we obtain a morphism of simplicial presheaves
\begin{equation}
\label{eqn:BG}
B_tG \longrightarrow B\mathbf{Tors}_t(G).
\end{equation}

\begin{lem}
\label{lem:BG}
	Let $\mathbf C$ be a small category, $t$ a Grothendieck topology on $\mathbf C$, and $G$ a $t$--sheaf of groups on $\mathbf C$.
	Then:
	\begin{enumerate}
		\item[(i)] The map \eqref{eqn:BG} is a weak equivalence of simplicial presheaves.
		\item[(ii)] There is a natural isomorphism
		\[
		\pi_0(B_tG)(-) \cong H^1_t(-,G).
		\]
		\item[(iii)] There is a canonical weak equivalence $\mathbf R\Omega B_tG\simeq G$.
	\end{enumerate}
\end{lem}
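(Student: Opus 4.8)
The plan is to prove the three assertions of Lemma~\ref{lem:BG} in order, using the standard simplicial models for $BG$ and the interplay between $t$-local replacement and the stack $B\mathbf{Tors}_t(G)$.

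\textbf{Part (i).} First I would observe that the statement is local for the topology $t$: both $B_tG$ and $B\mathbf{Tors}_t(G)$ are $t$-local by construction, and the map \eqref{eqn:BG} is obtained from $BG\to B\mathbf{Tors}_t(G)$ by applying $R_t$ and using $t$-locality of the target. By Hollander's result \cite{Hollander} (or rather its proof), it suffices to check that $BG\to B\mathbf{Tors}_t(G)$ is a \emph{local} weak equivalence, i.e.\ induces an equivalence on all $t$-stalks, or equivalently becomes an equivalence after $t$-sheafification on homotopy presheaves. The key computation is: on $\pi_0$, the presheaf $\pi_0(BG)$ is trivial (a point), while $\pi_0(B\mathbf{Tors}_t(G))(U)$ is the set of isomorphism classes of $G$-torsors over $U$; these agree after $t$-sheafification since every $G$-torsor is $t$-locally trivial (Definition~\ref{def:torsor}(ii)). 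On $\pi_1$ based at the trivial torsor, $\pi_1(BG)(U)=G(U)$ and $\pi_1(B\mathbf{Tors}_t(G))(U)=\mathrm{Aut}(\text{trivial torsor})=G(U)$ compatibly, and higher homotopy groups vanish on both sides (the nerve of a groupoid is a $1$-type). Hence the map is a local weak equivalence, so applying $R_t$ yields the objectwise weak equivalence \eqref{eqn:BG}. I expect the main subtlety here to be bookkeeping the identification of automorphism groups and checking the sheafification statement carefully; everything else is formal.

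\textbf{Part (ii).} This is essentially a corollary of (i): by definition $H^1_t(-,G)$ is the pointed set of isomorphism classes of $t$-locally trivial $G$-torsors, which is exactly $\pi_0(B\mathbf{Tors}_t(G))$ as a presheaf, and $R_t$ does not change $\pi_0$ up to $t$-sheafification — but since $B\mathbf{Tors}_t(G)$ is already $t$-local, $\pi_0(B_tG)\cong\pi_0(B\mathbf{Tors}_t(G))$ via the weak equivalence of (i). So $\pi_0(B_tG)(-)\cong H^1_t(-,G)$ naturally in the object. (If one prefers the simplicial presheaf definition of $H^1_t$ as $\pi_0$ of the $t$-local replacement of $BG$, then (ii) is immediate from the definition of $B_tG$ and nothing more is needed.)

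\textbf{Part (iii).} Here the point is that $BG$ is a model for the simplicial classifying space of the (discrete-in-each-degree, constant simplicial) group $G$, so there is a natural weak equivalence $\Omega BG\simeq G$ of simplicial presheaves before localization — concretely, $G\to PG\to BG$ with $PG$ the path object (the décalage) is a fiber sequence with contractible total space, exhibiting $G$ as the loop space of $BG$. Then I would argue that $R_t$ commutes with $\mathbf R\Omega$ in this situation: $\mathbf R\Omega$ is a homotopy limit (pullback along the basepoint), and $R_t$ preserves homotopy fiber sequences since $t$-local equivalences are closed under homotopy pullback in the local model structure (local injective/projective model structures are right proper). Applying $R_t$ to $G\to PG\to BG$ gives $R_tG\to R_t PG\to B_tG$; since $G$ is already a $t$-sheaf of groups, $R_tG\simeq G$, and $R_t PG$ is still contractible, so $\mathbf R\Omega B_tG\simeq G$. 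The one thing to be careful about is whether $R_t$ of a contractible presheaf is contractible and whether the square stays homotopy Cartesian after localization; this follows from right properness of the $t$-local model structure, which I would cite. Overall I expect Part (i) to be the main obstacle, as it requires identifying $B\mathbf{Tors}_t(G)$ with the $t$-localization of $BG$ at the level of homotopy sheaves, while Parts (ii) and (iii) are then short formal consequences.
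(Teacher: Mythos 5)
Your part (i) contains the gap that the paper's proof is actually about. You verify that $BG\to B\mathbf{Tors}_t(G)$ induces an isomorphism on $t$--sheafified homotopy groups and then conclude that, since both $B_tG$ and $B\mathbf{Tors}_t(G)$ are $t$--local, the induced map \eqref{eqn:BG} is an \emph{objectwise} weak equivalence. That inference is unjustified: an isomorphism on sheaves of homotopy groups is a weak equivalence in the Jardine (hypercomplete) sense, whereas $t$--locality in the sense of $R_t$ only means descent for $t$--covers, and a Jardine equivalence between objects satisfying such descent need not be an objectwise equivalence — this is precisely the failure of hypercompleteness. (What is automatic is that a genuine $t$--local equivalence between $t$--local objects is objectwise, but your homotopy-sheaf computation only gives the weaker Jardine statement, and citing Hollander only yields that $B\mathbf{Tors}_t(G)$ is a stack, not the reduction you want.) The paper closes exactly this gap by a truncatedness argument: by Dugger--Hollander--Isaksen \cite[Corollary A.8]{DHI} it suffices that the sections $B_tG(U)$ and $B\mathbf{Tors}_t(G)(U)$ have no homotopy in dimensions $\geq 2$; this is clear for the nerve of a groupoid, and for $B_tG$ it is proved by expressing $1$--truncatedness as the condition that an iterated diagonal map be an equivalence — a finite homotopy-limit condition — and using that $R_t$ preserves homotopy pullbacks. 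Without some input of this kind your (i), and hence your (ii), does not go through.

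Granting (i), your (ii) is fine and matches the paper. Your (iii) takes a genuinely different route: the paper simply observes that the based loops of $B\mathbf{Tors}_t(G)$ at the trivial torsor are the automorphism sheaf $G$, so (iii) follows from (i), whereas you localize the path fibration $G\to PG\to BG$. That route can be made to work, but not for the reason you give: right properness does not imply that a left Bousfield localization preserves homotopy fiber sequences (the $\aone$--localization is the standard counterexample, and controlling this failure is the content of Theorem~\ref{thm:A1fibration}). The correct justification is that $R_t$ is a left exact localization and hence preserves homotopy pullbacks — the same fact the paper invokes in its proof of (i) — together with the observation, which should be stated, that a $t$--sheaf of groups is itself $t$--local, so $R_tG\simeq G$ and $R_tPG$ remains objectwise contractible.
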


\begin{proof}
	It is clear that the map \eqref{eqn:BG} induces an isomorphism on $t$--sheaves of homotopy groups, so that it is a weak equivalence in the Jardine model structure. To deduce that it is a weak equivalence, it therefore suffices to show that the source and target are fibrant in the Jardine model structure.
	By Dugger–Hollander–Isaksen \cite[Corollary A.8]{DHI}, it suffices to show that, for every $U\in\mathbf{C}$, the simplicial sets $B_tG(U)$ and $B\mathbf{Tors}_t(G)(U)$ have no homotopy in dimensions $\geq 2$.  This statement is clear for the latter as it is the nerve of a groupoid.  To treat the former case, we recall a fact from simplicial homotopy theory: if $X$ is a simplicial set, then $X$ has no homotopy in dimensions $\geq k$ if and only if the homotopy fibers of the diagonal map $X \to X \times^h X$ have no homotopy in dimensions $\geq k-1$; this can be checked by assuming $X$ is a Kan complex and studying homotopy groups.  Thus, a simplicial set $X$ has no homotopy in dimensions $\geq 2$ if and only if its 3-fold diagonal
\[
X \longrightarrow  X\times^h_{X\times^h_{X\times^h X} X}X
\]
is a weak equivalence.  Since $R_t$ preserves homotopy pullbacks, it also preserves the property of having no homotopy in dimensions $\geq 2$.
	 This proves (i). Assertions (ii) and (iii) are true essentially by definition if we replace $B_tG$ by $B\mathbf{Tors}_t(G)$, so they both follow from (i).
\end{proof}

\subsubsection*{Torsors under $S$--group schemes}
Our main interest is to representability results for torsors under group schemes, so we now discuss that situation in greater detail.  Let $G$ be an $S$--group scheme and let $X$ be an $S$--scheme. By a \emph{$G$--torsor over $X$} we will mean a $G$--torsor in the sense of Definition~\ref{def:torsor}, for $\mathbf C$ the category of $S$--schemes and $t$ the fppf topology.  In the sequel $G$ will always be affine over $S$, and in that case a $G$--torsor over $X$ is automatically representable by an $S$--scheme, by Milne \cite[Theorem III.4.3 (a)]{Milne} (note: the implicit Noetherian hypothesis in Milne's argument is unnecessary).

If moreover $X$ and $G$ belong to $\Sm_S$, then taking $\mathbf C$ to be the category $\Sm_S$ with $t$ the \'etale topology one obtains an equivalent notion of torsor. Indeed, if $\pi\colon\mathscr P\to X$ is a $G$--torsor over $X$, then $\pi$ is finitely presented and smooth by the following lemma.  Since smooth morphisms admit sections \'etale locally, $\pi$ itself is a cover of $X$ in the \'etale topology which trivializes the torsor.

\begin{lem}
\label{lem:finitepresentation}
Suppose $G$ is an affine $S$--group scheme, $X$ is an $S$--scheme, and $\pi\colon\mathscr P\to X$ is a $G$--torsor over $X$. If $G\to S$ is finitely presented, flat, or smooth, then so is $\pi\colon \mathscr P\to X$.
\end{lem}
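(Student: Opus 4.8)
The plan is to reduce everything to base change along the structure map $G\to S$, using that a $G$--torsor is by construction fppf-locally trivial (Definition~\ref{def:torsor}(ii)) together with the facts that each of the properties ``finitely presented'', ``flat'' and ``smooth'' is stable under base change and is local on the target for a suitable class of covers. Recall that $\mathscr P$ is representable by the discussion preceding the lemma (alternatively, the trivialization below shows $\pi$ is affine, so representability also follows from fpqc descent of affine morphisms), so these morphism-theoretic notions make sense for $\pi$.

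First I would record the standard triviality criterion: a $G$--torsor $q\colon P\to Y$ (over the category of $S$--schemes with the fppf topology) admitting a section $s\colon Y\to P$ is isomorphic over $Y$ to the projection $G\times_S Y\to Y$. Indeed, on $T$--points the assignment $(y,g)\mapsto s(y)\cdot g$ defines a morphism $G\times_S Y\to P$ over $Y$, and axiom (i) of Definition~\ref{def:torsor} --- the isomorphism $P\times G\isomto P\times_Y P$ with components $\pi_1$ and the action --- supplies the inverse, sending $p\in P(T)$ to $(q(p),g)$, where $g$ is the unique element of $G(T)$ with $s(q(p))\cdot g=p$.

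Next, by Definition~\ref{def:torsor}(ii) there is an fppf covering $\{U_i\to X\}_i$ such that each $\mathscr P\times_X U_i\to U_i$ admits a section. Since base change of a torsor is a torsor, $\mathscr P\times_X U_i\to U_i$ is a torsor under $G\times_S U_i$, and the criterion above identifies it with the projection $G\times_S U_i\to U_i$, i.e.\ with the base change of $G\to S$ along $U_i\to S$. As the three properties in question are stable under base change, $\mathscr P\times_X U_i\to U_i$ has whichever of them $G\to S$ is assumed to possess. Finally, I would conclude by descent on the target: flatness and (local) finite presentation are fpqc-local on the base, and smoothness descends along faithfully flat, locally finitely presented covers --- all of which $\{U_i\to X\}$ satisfies --- so $\pi\colon\mathscr P\to X$ inherits the property; for the ``finitely presented'' case one additionally descends quasi-compactness and quasi-separatedness, which is harmless after restricting to a quasi-compact open of $X$ and refining the cover accordingly.

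I do not anticipate a genuine obstacle, as all ingredients are classical; the two points requiring care are that the triviality criterion must be stated for the abstract torsors of Definition~\ref{def:torsor} and use only axiom (i), and that one must pair each descent statement with the right class of covers --- it is precisely the smoothness case that forces the covers to be fppf (faithfully flat and locally of finite presentation) rather than merely fpqc.
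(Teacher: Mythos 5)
Your proposal is correct and follows essentially the same route as the paper: trivialize fppf-locally using Definition~\ref{def:torsor}(ii), identify the trivialized torsor with the base change of $G\to S$, and conclude by fppf-descent of the properties on the target (the paper cites the Stacks Project tags for fppf-locality of all three properties at once). The only difference is that you spell out the standard ``torsor with a section is trivial'' argument via axiom (i), which the paper leaves implicit in the phrase ``is isomorphic to $G\times_S U_i\to U_i$.''
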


\begin{proof}
By definition, there exists an fppf cover $\{U_i\to X\}_{i\in I}$ such that $\mathscr P\times_XU_i\to U_i$ is isomorphic to $G\times_S U_i\to U_i$, which is finitely presented, flat, or smooth.  We conclude using the fact that each of these properties of a morphism is fppf-local on the target, by \cite[\href{http://stacks.math.columbia.edu/tag/02L0}{Tag 02L0 Lemma 34.19.11}, \href{http://stacks.math.columbia.edu/tag/02L2}{Tag 02L2 Lemma 34.19.13}, and \href{http://stacks.math.columbia.edu/tag/02VL}{Tag 02VL Lemma 34.19.25}]{stacks-project}.
\end{proof}

\begin{ex}
\label{ex:bundlesandtorsors}
Let $t$ be a topology on $\Sm_S$ in between the Zariski topology and the \'etale topology and let $n\geq 1$.  The groupoid of $GL_n$--torsors over a scheme is canonically equivalent to the groupoid of rank $n$ vector bundles. Since $GL_n$ is a smooth special group, any $GL_n$--torsor is $t$--locally trivial. In particular, by Lemma~\ref{lem:BG} (ii), we have
\[
\pi_0(B_tGL_n)(X)\cong \mathscr V_n(X)
\]
for any $X\in\Sm_S$, where $\mathscr V_n(X)$ denotes the set of isomorphism classes of rank $n$ vector bundles on $X$. Similarly, we have
\[
\pi_0(B_tSL_n)(X)\cong \mathscr V_n^o(X)\quad\text{and}\quad\pi_0(B_tSp_{2n})\cong \mathscr{HV}_{2n}(X),
\]
where $\mathscr V_n^o(X)$ (resp.\ $\mathscr{HV}_{2n}(X)$) is the set of isomorphism classes of rank $n$ oriented (resp.\ rank $2n$ symplectic) vector bundles (see the beginning of Section~\ref{ss:homotopyinvariance} for reminders about oriented and symplectic vector bundles).
\end{ex}

\subsubsection*{Affine representability for Nisnevich locally trivial $G$--torsors}
\begin{thm}
\label{thm:Gtorsors}
Suppose $G$ is a finitely presented smooth $S$--group scheme.  If $H^1_{\Nis}(-,G)$ is $\aone$--invariant on $\Sm_S^\aff$, then
\begin{enumerate}
	\item[(i)] The simplicial presheaf $R_{\Zar}\Singaone B_{\Nis}G$ is Nisnevich-local and $\A^1$--invariant.
	\item[(ii)] For every affine $X\in\Sm_S^\aff$, the canonical map
	\[
	H^1_{\Nis}(X,G) \longrightarrow [X,BG]_{\aone}
	\]
	is a bijection that is functorial with respect to $X$.
\end{enumerate}
\end{thm}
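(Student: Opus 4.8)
The plan is to apply the machinery of this section to the single presheaf $\mathscr F = B_{\Nis}G$ (rather than to a fiber sequence). The point is that $B_{\Nis}G = R_{\Nis}BG$ is by construction Nisnevich-local, hence satisfies Nisnevich descent and \emph{a fortiori} affine Nisnevich excision, while by Lemma~\ref{lem:BG}(ii) its sheaf of connected components satisfies $\pi_0(B_{\Nis}G)(-) \cong H^1_{\Nis}(-,G)$, which is $\aone$--invariant on $\Sm_S^\aff$ by hypothesis. Thus the pair of inputs needed to feed $B_{\Nis}G$ into \cite[Corollary 4.2.4]{PartI} is available, and that result yields that $\Singaone B_{\Nis}G$ again satisfies affine Nisnevich excision. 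By Proposition~\ref{prop:A1naive} this is equivalent to $B_{\Nis}G$ being $\A^1$--naive, and the same proposition gives that $R_{\Zar}\Singaone B_{\Nis}G$ is Nisnevich-local and $\A^1$--invariant; this is assertion (i). (Alternatively one can invoke the absolute representability statement \cite[Theorem 5.1.3]{PartI}, of which Theorem~\ref{thm:fiberaffinerepresentability} is the relative version, directly for $B_{\Nis}G$.) I would stress that it is essential to work with $B_{\Nis}G$ rather than $BG$ here: $\Singaone$ does not preserve Nisnevich-local weak equivalences, and it is precisely $\A^1$--invariance of $H^1_{\Nis}(-,G)$ on affines that rescues the excision property of $\Singaone B_{\Nis}G$.

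For (ii), fix $X\in\Sm_S^\aff$. Since $BG\to B_{\Nis}G$ is a Nisnevich-local weak equivalence, it induces a bijection $[X,BG]_{\aone}\isomto[X,B_{\Nis}G]_{\aone}$. As $B_{\Nis}G$ is $\A^1$--naive, Remark~\ref{rem:homotopygroups} identifies $[X,B_{\Nis}G]_{\aone}$ with the set $\pi_0(\Singaone B_{\Nis}G)(X)$ of naive $\A^1$--homotopy classes. Finally I would observe that the canonical surjection $\pi_0(B_{\Nis}G)(X)\to\pi_0(\Singaone B_{\Nis}G)(X)$, i.e.\ the quotient by the naive $\A^1$--homotopy relation, is a bijection: because $\pi_0(B_{\Nis}G)$ is $\A^1$--invariant on affines, the two restrictions $\pi_0(B_{\Nis}G)(X\times\aone)\rightrightarrows\pi_0(B_{\Nis}G)(X)$ along the zero and one sections both coincide with the inverse of the projection isomorphism, so the coequalizer computing $\pi_0$ of the relevant diagonal is $\pi_0(B_{\Nis}G)(X)=H^1_{\Nis}(X,G)$ itself. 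Composing these identifications gives the asserted bijection $H^1_{\Nis}(X,G)\cong[X,BG]_{\aone}$, and unwinding the construction shows that it is the canonical comparison map (which factors as $\pi_0(B_{\Nis}G)(X)\to\pi_0(\Singaone B_{\Nis}G)(X)\to\pi_0(\Laone B_{\Nis}G)(X)$); naturality in $X$ is built into every step.

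I do not expect a serious obstacle: given the results of this section and of \cite{PartI}, the argument is essentially formal, and the substantive content—$\A^1$--invariance of $H^1_{\Nis}(-,G)$ on affine schemes—is taken as a hypothesis here (it is established for isotropic reductive $G$ over an infinite field in Section~\ref{sec:groupstorsors}). The only points demanding a little care are the systematic replacement of $BG$ by $B_{\Nis}G$ throughout and the verification that the bijection produced in (ii) coincides with the canonical map; both are routine.
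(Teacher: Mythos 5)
Your proposal is correct and follows essentially the same route as the paper: the paper likewise observes that $B_{\Nis}G$ is Nisnevich-local (hence satisfies Nisnevich excision), identifies $\pi_0(B_{\Nis}G)\cong H^1_{\Nis}(-,G)$ via Lemma~\ref{lem:BG}(ii), applies the affine representability machinery of \cite{PartI} (Theorem 5.1.3 there, which you mention as your alternative and whose content you simply unpack via \cite[Corollary 4.2.4]{PartI} and Proposition~\ref{prop:A1naive}), and concludes (ii) using that $BG\to B_{\Nis}G$ is a Nisnevich-local equivalence. Your explicit coequalizer argument identifying $\pi_0(B_{\Nis}G)(X)$ with $\pi_0(\Singaone B_{\Nis}G)(X)$ is a correct spelling-out of what the cited Part~I theorem provides.
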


\begin{proof}
Since $B_{\Nis}G$ is Nisnevich-local by definition, it satisfies Nisnevich excision by \cite[Theorem 3.2.5]{PartI}.  Taking into account the identification $\pi_0(B_{\Nis}G)\cong H^1_{\Nis}(-,G)$ from point (ii) of Lemma~\ref{lem:BG}, we can apply \cite[Theorem~5.1.3]{PartI} to $B_{\Nis}G$, which implies (i) and (ii) (note also that $[X,B_{\Nis}G]_{\A^1}\cong [X,BG]_{\A^1}$ since $BG\to B_{\Nis}G$ is a Nisnevich-local equivalence).
 \end{proof}

\subsection{Application to homogeneous spaces}
\label{ss:applicationhomogeneousspaces}
Let $\mathbf C$ be a small category equipped with a Grothendieck topology $t$. Let $G$ and $H$ be $t$--sheaves of groups on $\mathbf C$ with $H\subset G$. We then have a homotopy fiber sequence of simplicial presheaves
\[
G/H \longrightarrow BH \longrightarrow BG,
\]
where $G/H$ denotes the presheaf $U\mapsto G(U)/H(U)$.
Applying the $t$--localization functor $R_t$, we obtain a homotopy fiber sequence of $t$--local simplicial presheaves
\begin{equation}
\label{eqn:homogeneousSpaceAsFiber}
a_t(G/H) \longrightarrow B_t H \longrightarrow B_t G.
\end{equation}
We now restrict attention to $\mathbf{C} = \Sm_S$ with the goal of applying Theorem~\ref{thm:fiberaffinerepresentability}.  For geometric applications, we need to better understand the sheaf $a_t(G/H)$.

\subsubsection*{Homogeneous spaces: topologies and quotient sheaves}
Write $rX$ for the presheaf on the category of $S$--schemes represented by an $S$--scheme $X$, and $r'X$ for the restriction of the presheaf $rX$ to $\Sm_S$. Suppose that $G$ and $H$ are finitely presented smooth $S$--group schemes, and that $H$ is a closed subgroup of $G$.  The right translation action of $H$ on $G$ is scheme-theoretically free and it follows from a result of Artin \cite[Corollary 6.3]{Artin} that the sheaf $a_{\fppf}(rG/rH)$ is representable by an $S$--algebraic space.  Two questions naturally present themselves: first, when does the fppf sheaf quotient coincide with the Zariski or Nisnevich sheaf quotient and second, is the fppf-sheaf $a_{\fppf}(rG/rH)$ representable by a smooth scheme?  We address the first question here; we answer the second question in various cases in Section~\ref{ss:grouptorsorshomogeneousspaces}.

\begin{lem}
\label{lem:zariskiquotient}
Suppose $G$ is a finitely presented $S$--group scheme and $H \subset G$ is a finitely presented closed $S$--subgroup scheme. Assume that $H$ is flat over $S$ and that the quotient $G/H$ exists as an $S$--scheme.  Then $G\to G/H$ is an $H$--torsor, and the following statements hold.
\begin{enumerate}
	\item[(i)] If $t$ is a subcanonical topology on $S$--schemes such that the map $G \to G/H$ is $t$--locally split, then $r(G/H) \cong a_t(rG/rH)$.
	\item[(ii)] If $G$ is smooth over $S$, then $G/H$ is smooth over $S$. Moreover, if $t$ is a subcanonical topology on $\Sm_S$ such that the map $G\to G/H$ is $t$--locally split, then $r'(G/H)\cong a_t(r'G/r'H)$.
\end{enumerate}
\end{lem}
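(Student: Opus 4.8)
The plan is to reduce everything to the structure theory of quotients by flat closed subgroups together with a formal ``monomorphism $+$ epimorphism $=$ isomorphism'' argument for sheaves. First I would verify the torsor assertion. Since $H$ is flat and finitely presented over $S$ and the quotient $G/H$ exists as a scheme, the projection $\pi\colon G\to G/H$ is faithfully flat and locally of finite presentation, and it realizes $G/H$ as the fppf sheaf quotient of $G$ by the free right translation action of $H$; in particular the action map $G\times_S H\to G\times_{G/H}G$, $(g,h)\mapsto(g,gh)$, is an isomorphism (it is always a closed immersion, since $H\hookrightarrow G$ is, and it is an isomorphism precisely because the equivalence relation defined by a flat quotient is effective). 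This is exactly condition (i) of Definition~\ref{def:torsor}; condition (ii) holds for the fppf topology because $\pi$ is itself an fppf cover of $G/H$ that becomes split after base change along itself (via the diagonal $G\to G\times_{G/H}G$). Hence $G\to G/H$ is an $H$--torsor. I expect this first step to be the main obstacle: one must extract from the phrase ``the quotient $G/H$ exists as an $S$--scheme'' both that $\pi$ is faithfully flat and that it presents $G/H$ as the \emph{effective} quotient of the free $H$--action, which is what makes the torsor axioms hold; this rests on the theory of quotients of flat group schemes (see \cite{stacks-project} and the references therein).

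For part (i): there is a canonical map of presheaves $rG/rH\to r(G/H)$ sending $g\in G(T)$ to $\pi\circ g\in (G/H)(T)$, which is well defined because $\pi$ is $H$--invariant; since $t$ is subcanonical, $r(G/H)$ is a $t$--sheaf, so this induces a map $a_t(rG/rH)\to r(G/H)$, and I claim it is both a monomorphism and an epimorphism of $t$--sheaves. It is a monomorphism because already $rG/rH\to r(G/H)$ is injective: if $g_1,g_2\in G(T)$ have the same image in $(G/H)(T)$, then $(g_1,g_2)$ factors through $G\times_{G/H}G\cong G\times_S H$, so $g_2=g_1 h$ for a unique $h\in H(T)$; and sheafification preserves monomorphisms. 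It is an epimorphism because the hypothesis that $\pi$ is $t$--locally split says exactly that $rG\to r(G/H)$ is an epimorphism of $t$--sheaves, and this map factors through $a_t(rG/rH)$. A map of sheaves that is both mono and epi is an isomorphism, proving (i).

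For part (ii): smoothness of $G/H\to S$ follows by fppf descent along $\pi\colon G\to G/H$. Indeed, $G/H\to S$ is flat (flatness of $G\to S$ descends along the faithfully flat $\pi$), locally of finite presentation (this property descends along the faithfully flat, locally finitely presented, surjective morphism $\pi$), and has smooth fibers (passing to a geometric point $\bar s$ of $S$, the scheme $G_{\bar s}$ is regular, hence $(G/H)_{\bar s}$ is regular by faithfully flat descent of regularity, and being locally of finite type over a field it is therefore smooth); moreover $G/H$ is quasi-compact and quasi-separated over $S$ because $G$ is and $\pi$ is surjective, so $G/H\in\Sm_S$ whenever $G\in\Sm_S$. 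Once $G/H\in\Sm_S$, the identification $r'(G/H)\cong a_t(r'G/r'H)$ of sheaves on $\Sm_S$ is proved verbatim as in (i): the map $r'G/r'H\to r'(G/H)$ is a presheaf monomorphism by the torsor condition (i) of Definition~\ref{def:torsor}, and $r'G\to r'(G/H)$ is an epimorphism of $t$--sheaves on $\Sm_S$ by the $t$--local splitting hypothesis (the splitting cover being taken in $\Sm_S$), so $a_t(r'G/r'H)\to r'(G/H)$ is an isomorphism.
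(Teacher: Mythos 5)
Your proposal is correct and has the same skeleton as the paper's proof: everything reduces to knowing that the existence of $G/H$ as a scheme forces $\pi\colon G\to G/H$ to be faithfully flat and to present $G/H$ as the fppf quotient sheaf (equivalently, an $H$--torsor), after which (i) and the sheaf claim in (ii) are formal and smoothness follows by descent along $\pi$. The differences are in packaging. For the crucial first step you correctly isolate what is needed but defer to ``the theory of quotients of flat group schemes''; the paper pins this down with a single precise citation, Anantharaman \cite[Appendice I, Th\'eor\`eme 6]{Anantharaman}, which yields $r(G/H)\cong a_{\fppf}(rG/rH)$ directly, whence the torsor axioms (including $G\times_S H\cong G\times_{G/H}G$) and, via Lemma~\ref{lem:finitepresentation}, flatness and finite presentation of $\pi$. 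You should supply such a reference rather than gesture at general theory, since ``the quotient exists as an $S$--scheme'' carries a priori no flatness or effectivity information --- as you yourself note, this is the only non-formal point. For (i), your mono-plus-epi argument (kernel-pair identification plus exactness of $a_t$, then the fact that a topos is balanced) is an interchangeable alternative to the paper's appeal to \cite[Proposition 4.3 (2)]{SGA41}, which says the $t$--epimorphism $rG\to r(G/H)$ is the coequalizer of its kernel pair $rG\times rH\rightrightarrows rG$; both hinge on the same torsor isomorphism. For smoothness in (ii), your descent of flatness, local finite presentation and geometric regularity along $\pi$ amounts to reproving \cite[Proposition 17.7.7]{EGA44}, which the paper simply cites; your version is fine (the step ``regular and locally of finite type over a field implies smooth'' is legitimate because you work over an algebraically closed, hence perfect, field), though the added quasi-compactness/quasi-separatedness remark, not needed for the statement as written, really requires checking the diagonal after the fppf base change $G\times_S G\to G/H\times_S G/H$ rather than mere surjectivity of $\pi$.
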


\begin{proof}
	By a theorem of Anantharaman \cite[Appendice I, Th\'eor\`eme 6]{Anantharaman}, we have $r(G/H)\cong a_{\fppf}(rG/rH)$. In particular, $G\to G/H$ is an $H$--torsor, and hence it is flat by Lemma~\ref{lem:finitepresentation}. If $G$ is smooth, it follows from \cite[Proposition 17.7.7]{EGA44} that $G/H$ is also smooth. If $G\to G/H$ is $t$--locally split, then $rG\to r(G/H)$ is an epimorphism of $t$--sheaves. By \cite[Proposition 4.3 (2)]{SGA41}, this implies that $r(G/H)$ is the coequalizer of the equivalence relation $rG\times_{r(G/H)}rG\cong rG\times rH\rightrightarrows rG$ in the category of $t$--sheaves, which exactly means that $r(G/H)\cong a_t(rG/rH)$. The second statement is proved in the same way.
\end{proof}

\subsubsection*{Affine representability for homogeneous spaces}
\begin{thm}
\label{thm:homogeneousrep}
Suppose $G$ is a finitely presented smooth $S$--group scheme and $H \subset G$ is a finitely presented smooth closed $S$--subgroup scheme such that the quotient $G/H$ exists as an $S$--scheme. Suppose that $G\to G/H$ is Nisnevich locally split and that $H^1_{\Nis}(-,G)$ and $H^1_{\Nis}(-,H)$ are $\A^1$--invariant on $\Sm_S^\aff$. Then $G/H$ is $\A^1$--naive. In particular, for every $X\in\Sm_S^\aff$, there is a bijection
\[
\pi_0(\Singaone G/H)(X) \cong [X,G/H]_{\A^1}.
\]
\end{thm}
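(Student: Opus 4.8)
The plan is to apply Theorem~\ref{thm:fiberaffinerepresentability} to the homotopy fiber sequence~\eqref{eqn:homogeneousSpaceAsFiber}, namely to
\[
a_{\Nis}(G/H) \longrightarrow B_{\Nis}H \longrightarrow B_{\Nis}G ,
\]
and then to identify the fiber term $a_{\Nis}(G/H)$ with the representable presheaf $r'(G/H)$ attached to the scheme $G/H$.

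First I would verify the two hypotheses of Theorem~\ref{thm:fiberaffinerepresentability} for this fiber sequence. Condition (i): the simplicial presheaves $B_{\Nis}G = R_{\Nis}BG$ and $B_{\Nis}H = R_{\Nis}BH$ are Nisnevich-local by construction, hence satisfy affine Nisnevich excision by \cite[Theorem~3.2.5]{PartI}. Condition (ii): by Lemma~\ref{lem:BG}(ii) one has $\pi_0(B_{\Nis}G)\cong H^1_{\Nis}(-,G)$ and $\pi_0(B_{\Nis}H)\cong H^1_{\Nis}(-,H)$, and these presheaves are $\aone$--invariant on $\Sm_S^\aff$ by hypothesis. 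Theorem~\ref{thm:fiberaffinerepresentability} then shows that $a_{\Nis}(G/H)$ is $\A^1$--naive; equivalently, by Proposition~\ref{prop:A1naive}, $\Singaone a_{\Nis}(G/H)$ satisfies affine Nisnevich excision.

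It remains to pass from the sheaf quotient $a_{\Nis}(G/H)=a_{\Nis}(r'G/r'H)$ to the scheme $G/H$. Here I would invoke Lemma~\ref{lem:zariskiquotient}(ii): since $H$ is smooth, hence flat, over $S$, since $G/H$ exists as an $S$--scheme by hypothesis, and since $G\to G/H$ is Nisnevich locally split while the Nisnevich topology on $\Sm_S$ is subcanonical, there is an isomorphism of presheaves $r'(G/H)\cong a_{\Nis}(r'G/r'H)$. Because satisfying affine Nisnevich excision depends only on the isomorphism class of a presheaf, $\Singaone(G/H)$ also satisfies affine Nisnevich excision, so by Proposition~\ref{prop:A1naive} the scheme $G/H$ is $\A^1$--naive. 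The asserted bijection $\pi_0(\Singaone G/H)(X)\cong[X,G/H]_{\A^1}$ for $X\in\Sm_S^\aff$ is then the instance of~\eqref{eqn:naiveVSgenuine} recorded in Remark~\ref{rem:homotopygroups}.

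Since essentially every ingredient has been assembled in advance, I do not expect a serious obstacle; the only point requiring care is the bookkeeping around the fiber term. One must make sure that the $R_{\Nis}$--localized sequence~\eqref{eqn:homogeneousSpaceAsFiber} genuinely has $a_{\Nis}(G/H)$ — the Nisnevich sheafification of the discrete presheaf $U\mapsto G(U)/H(U)$ — as its fiber, which uses that $R_{\Nis}$ preserves homotopy fiber sequences and agrees with sheafification on discrete presheaves, and that the identification of Lemma~\ref{lem:zariskiquotient} is compatible with $\Singaone$ and with Nisnevich-local $\A^1$--invariant fibrant replacement. All of this is routine given the cited results.
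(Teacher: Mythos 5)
Your proposal is correct and follows essentially the same route as the paper: apply Theorem~\ref{thm:fiberaffinerepresentability} to the fiber sequence~\eqref{eqn:homogeneousSpaceAsFiber}, using \cite[Theorem 3.2.5]{PartI} for affine Nisnevich excision of $B_{\Nis}G$ and $B_{\Nis}H$, Lemma~\ref{lem:BG}(ii) to translate the $\pi_0$ hypothesis into $\aone$--invariance of $H^1_{\Nis}$, and Lemma~\ref{lem:zariskiquotient} to identify the fiber $a_{\Nis}(r'G/r'H)$ with $r'(G/H)$. The only (immaterial) difference is ordering: the paper identifies the fiber with the scheme first and then applies the representability theorem, whereas you apply the theorem to the sheaf quotient and transfer $\A^1$--naivety across the isomorphism afterwards.
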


\begin{proof}
The assumption on $G \to G/H$ combined with Lemma~\ref{lem:zariskiquotient} allow us to conclude that $r'(G/H) \cong a_{\Nis}(r'G/r'H)$ and thus the homotopy fiber sequence \eqref{eqn:homogeneousSpaceAsFiber} has the form $r'(G/H) \to B_{\Nis} H \to B_{\Nis} G$.  The simplicial presheaves $B_{\Nis}G$ and $B_{\Nis}H$ are Nisnevich-local and hence satisfy Nisnevich excision by \cite[Theorem 3.2.5]{PartI}.  The result is now a direct application of Theorem~\ref{thm:fiberaffinerepresentability}, taking into account Lemma~\ref{lem:BG} (ii).
\end{proof}

\section{Homotopy invariance for torsors under group schemes}
\label{sec:groupstorsors}
The main goal of this section is to study $\aone$--invariance of the functors $H^1_{\Nis}(-,G)$ for $G$ a linear group.  Section~\ref{ss:grouptorsorshomogeneousspaces} reviews basic definitions about group schemes, torsors and homogeneous spaces; it also collects a number of results that will be used later in the text.  Section~\ref{ss:localtoglobal} establishes an analog of the local-to-global principle (a.k.a. ``Quillen patching'') for torsors under linear group schemes under rather general hypotheses; the main result is Theorem~\ref{thm:localtoglobalprinciple}.  Finally, Section~\ref{ss:homotopyinvariance} proves general homotopy invariance results; the main results are Theorems~\ref{thm:sympletichomotopyinvariance} and~\ref{thm:hoinvisotropic}.  For simplicity, we assume throughout this section that the base scheme $S$ is the spectrum of a commutative ring $R$.  In general there is a tradeoff between generality of the group $G$ under consideration and the base ring $R$.

\subsection{Reductive group schemes and homogeneous spaces: recollections}
\label{ss:grouptorsorshomogeneousspaces}
The goal of this section is to recall some basic definitions and properties of group schemes, torsors and homogeneous spaces over rather general bases.  Rather than attempting to be exhaustive, we only aim to point the reader to places in the literature where they can find the required results.  The grouping of these results is slightly eclectic: only a very small portion of the definitions and results established here will be used in the remainder of Section~\ref{sec:groupstorsors}.  Many of the results we state here are significantly easier to establish (or even unnecessary) if the base ring $R$ is a field.

\subsubsection*{Linear and reductive group schemes}
We write $GL_{n,R}$ for the general linear group scheme over $R$ and $\gm{}_{,R}$ for $GL_{1,R}$.  If $R$ is clear from context, we will drop it from the notation.

\begin{defn}
\label{defn:lineargroupscheme}
By a {\em linear $R$--group scheme}, we mean a group scheme $G$ over $R$ admitting a finitely presented closed immersion group homomorphism $G \to GL_{n,R}$.
\end{defn}

Later, the homotopy invariance results we establish will require much more stringent hypotheses on $G$.  We use the definition of {\em reductive} (resp. {\em semi-simple}) $R$--group scheme of Demazure--Grothendieck \cite[Expos{\'e} XIX Definition 2.7]{SGA3.3}: a reductive (resp. {\em semi-simple}) $R$--group scheme is a smooth, affine $R$--group scheme with geometric fibers that are connected reductive (resp. {\em semi-simple}) groups in the usual sense \cite[Expos{\'e} XIX 1.6]{SGA3.3}, i.e., have trivial unipotent radical (resp. radical).  Recall that a reductive $R$--group scheme $G$ is called {\em split} if it contains a split maximal torus \cite[Expos\'e XXII D\'efinition 1.13]{SGA3.3}.  Any split reductive group scheme is pulled back from a unique ``Chevalley'' group scheme over $\Spec \Z$.

If $R$ is a field, it is a well-known consequence of the classification of reductive groups that reductive $R$--group schemes are linear $R$--group schemes.  If $R$ is no longer a field, the connection between ``reductive'' and ``linear'' becomes more complicated, as the following example demonstrates.

\begin{ex}
Groups of multiplicative type need not be linear in general \cite[Expose IX D\'efinition 1.1]{SGA3.2}.   Indeed, \cite[Expos\'e XI Remarque 4.6]{SGA3.2} explains that if $R$ is a Noetherian and connected ring, then a group $G$ of multiplicative type admits an embedding in $GL_n$ if and only if it is isotrivial.
\end{ex}

Nevertheless, the following result shows that, assuming suitable hypotheses on the base, reductive $R$--group schemes are always linear.

\begin{prop}[Thomason]
\label{prop:criteriaforlinearity}
Suppose $G$ is a reductive $R$--group scheme.  Assume one of the following additional hypotheses holds:
\begin{enumerate}
\item[(i)] $R$ is regular and Noetherian; or
\item[(ii)] $G$ is split.
\end{enumerate}
Then $G$ is a linear $R$--group scheme.
\end{prop}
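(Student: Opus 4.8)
The plan is to construct, in each case, a finitely presented faithful linear representation of $G$ on a locally free $R$--module of finite rank, which then yields the desired closed immersion $G\hookrightarrow GL_{n,R}$ (the closed immersion property follows from standard facts about affine group schemes once faithfulness and finite presentation are in hand). The natural source of such a representation is the conjugation or translation action of $G$ on a suitable finitely generated piece of its coordinate ring $\O(G)$.

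For case (ii), when $G$ is split, I would invoke the fact that a split reductive $R$--group scheme is base-changed from a Chevalley group scheme $G_\Z$ over $\Spec\Z$ (as recalled in the paragraph preceding the proposition). Over $\Spec\Z$ one has a faithful finite free representation of $G_\Z$: indeed, $\Z$ is regular Noetherian, and moreover the Chevalley construction itself produces $G_\Z$ as a closed subgroup of some $GL_{n,\Z}$ (e.g.\ via a faithful representation of the corresponding semisimple Lie algebra together with the central torus, or directly from the structure theory in \cite{SGA3.3}). Base-changing this closed immersion along $\Spec R\to\Spec\Z$ gives a finitely presented closed immersion $G\hookrightarrow GL_{n,R}$, since closed immersions and finite presentation are stable under base change. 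Thus case (ii) reduces to the absolute case over $\Z$, which is classical.

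For case (i), with $R$ regular Noetherian, I would appeal to Thomason's result on equivariant resolutions / the existence of faithful representations for reductive group schemes over such bases. The key input is that $G$, being affine and flat of finite type over $R$, acts on its coordinate ring $\O(G)$, which is a filtered union of finitely generated $G$--stable $R$--submodules; over a regular Noetherian base one can, after possibly enlarging, arrange one such submodule $M$ to be faithful, and then replace $M$ by a finite free module surjecting onto it (or use that over a regular Noetherian ring one controls reflexive hulls / can clear denominators) to obtain a faithful representation on a finite free module. This is precisely the content of Thomason's work, and I would cite it directly rather than reproving it.

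The main obstacle is case (i): unlike the split case, there is no reduction to $\Z$, and the passage from "faithful action on a finitely generated module" to "faithful action on a finite \emph{free} (or at least finite locally free) module" genuinely uses regularity of $R$ — this is where Noetherianness and regularity are doing the work, and one cannot avoid citing the structural result of Thomason (this is the reason the proposition is attributed to him). Once the representation is produced, verifying that the resulting homomorphism $G\to GL_{n,R}$ is a finitely presented closed immersion is routine: faithfulness gives a monomorphism, affineness of $G$ together with the fact that $G\to\Spec R$ is finitely presented gives the finite presentation, and a monomorphism of affine group schemes that is finitely presented and flat-adjacent is a closed immersion by the standard argument (the diagonal is a closed immersion since $G$ is separated, and one identifies $G$ with a fiber of the orbit map).
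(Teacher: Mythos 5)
Your proposal is correct and follows essentially the same route as the paper: the split case is reduced to the Chevalley group over $\Spec\Z$ (which is regular Noetherian), and the regular Noetherian case is obtained by citing Thomason's embedding result \cite[Corollary 3.2 (3)]{ThomasonResolution} directly. The extra gloss you give on how Thomason's argument proceeds and on the closed-immersion formalities is not needed once the citation is in place, and this is exactly how the paper proceeds.
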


\begin{proof}
If $G$ is split, we can assume that $R=\Z$ and in particular that $R$ is regular Noetherian. In that case, the result follows from Thomason \cite[Corollary 3.2 (3)]{ThomasonResolution}.
\end{proof}

\begin{rem}
Thomason actually gives a sufficient condition for a group scheme to admit a closed immersion group homomorphism into the automorphism group scheme of a vector bundle over an arbitrary base $S$ \cite[Theorem 3.1]{ThomasonResolution}.  Since we have in mind applications to homotopy invariance, we have restricted attention to spectra of regular rings.
\end{rem}

\subsubsection*{Homogeneous spaces for reductive groups}
Suppose $G$ is a reductive $R$--group scheme and $\lambda: \gm{} \to G$ is a homomorphism of $R$--group schemes. Via $\lambda$, we may consider the $\gm{}$--action $\lambda: \gm{} \times G \to G$ defined pointwise by the formula $\lambda(t,g) := \lambda(t) g \lambda(t)^{-1}$.  We can define a subfunctor $P_G(\lambda) \subset G$ consisting of those points $g \in G$ such that $\lim_{t \to 0} \lambda(t,g)$ exists and a sub-functor $U_G(\lambda) \subset G$ consisting of those points $g \in G$ such that $\lim_{t\to 0} \lambda(t,g) = 1$ (see Conrad \cite[Theorem 4.1.7]{Conrad} for precise definitions).  By \cite[Theorem 4.1.7]{Conrad} both of these functors are representable by $R$--subgroup schemes of $G$; since we assumed $G$ reductive it follows also that $P_G(\lambda)$ and $U_G(\lambda)$ are smooth and connected.  By \cite[Example 5.2.2]{Conrad} $P_G(\lambda)$ is parabolic, and $U_G(\lambda)$ is a closed normal $R$--subgroup scheme whose geometric fibers correspond to unipotent radicals of the geometric fibers of $P_G(\lambda)$ \cite[Corollary 5.2.5]{Conrad}; we will abuse terminology and refer to $U_G(\lambda)$ as the unipotent radical of $P_G(\lambda)$.

If $Z_G(\lambda)$ is the centralizer of $\lambda$, then by \cite[Definition 5.4.2]{Conrad} and the subsequent discussion, $Z_G(\lambda)$ is a Levi factor of $P_G(\lambda)$, i.e., $Z_G(\lambda)$ is a smooth reductive $R$--group scheme, and there is a semi-direct product decomposition of the form $Z_G(\lambda) \ltimes U_G(\lambda) \cong P_G(\lambda)$.  This description of parabolics, their unipotent radicals and Levi factors is called a ``dynamic'' description in \cite{ConradGabberPrasad, Conrad} (since it arises from a study of ``flows'' under an action of $\gm{}$).  We use these ideas to establish the following result.

\begin{lem}
\label{lem:parabolics}
Suppose $R$ is a connected ring, $G$ is a reductive $R$--group scheme, $P \subset G$ is a parabolic $R$--subgroup scheme and $L$ is a Levi factor of $P$.  The following statements hold.
\begin{enumerate}
\item[(i)] The quotients $G/L$ and $G/P$ exist as smooth $R$--schemes.
\item[(ii)] The morphism $G \to G/L$ is a generically trivial $L$--torsor.
\item[(iii)] The morphism $G/L \to G/P$ is a composition of torsors under vector bundles.
\end{enumerate}
\end{lem}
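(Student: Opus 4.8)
The plan is to reduce the whole lemma to the ``dynamic'' description of parabolics recalled above, which first has to be arranged over the connected base $R$. Given $P$ and its Levi $L$, the results of Conrad \cite[\S5.4]{Conrad} together with \cite[Exp.~XXVI]{SGA3.3} provide, at least Zariski-locally on $\Spec R$, a cocharacter $\lambda\colon\gm{}\to G$ with $P=P_G(\lambda)$, $L=Z_G(\lambda)$ and $U:=U_G(\lambda)$ the unipotent radical, so that $P\cong L\ltimes U$; write $U^-:=U_G(\lambda^{-1})$. Two facts will be used: (a) the multiplication $U^-\times P\to G$ is an open immersion onto an open subscheme $\Omega$ that is dense in every fiber of $G\to\Spec R$ (the ``open cell''), \cite[\S5.4]{Conrad}; and (b) $U$ carries a $P$-stable filtration $U=U^{(1)}\supset\cdots\supset U^{(n+1)}=1$ by smooth closed normal $R$-subgroup schemes, each normalized by $L$, with successive quotients $V_i:=U^{(i)}/U^{(i+1)}$ vector groups on which $U$ acts trivially and $L$ acts linearly --- concretely the filtration by $\lambda$-weight (root-group) spaces, which is canonical, hence descends and in fact depends only on $P$, \cite[Exp.~XXVI]{SGA3.3}.

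First I would prove (i) and (iii) together by building $G/L$ from $G/P$ as a tower of affine-space bundles; this uses only fppf descent and so works over the possibly non-Noetherian, non-regular ring $R$, avoiding Anantharaman's theorem. By \cite[Exp.~XXVI]{SGA3.3}, $G/P$ exists as a smooth projective $R$-scheme and $G\to G/P$ is a $P$-torsor. Set $P^{(i)}:=L\cdot U^{(i)}$, a smooth closed $R$-subgroup scheme since $L$ normalizes $U^{(i)}$, so that $L=P^{(n+1)}\subset\cdots\subset P^{(1)}=P$ with $P^{(i)}/P^{(i+1)}\cong V_i$. Inductively, the associated bundle of the $P^{(i)}$-torsor $G\to G/P^{(i)}$ with fiber the affine $P^{(i)}$-scheme $P^{(i)}/P^{(i+1)}$ is, by fppf descent of affine morphisms, an $R$-scheme affine over $G/P^{(i)}$; it represents the fppf quotient $G/P^{(i+1)}$ and is a torsor over $G/P^{(i)}$ under the vector bundle $G\times^{P^{(i)}}V_i$ (the $P^{(i)}$-action on $V_i$ being linear). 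Hence $G/L$ exists, the tower
\[
G/L=G/P^{(n+1)}\longrightarrow\cdots\longrightarrow G/P^{(1)}=G/P
\]
exhibits $G/L\to G/P$ as a composition of torsors under vector bundles --- which is (iii) --- and $G/L$ is smooth over $R$ because $G/P$ is and torsors under vector bundles are smooth, which is (i).

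For (ii), since $L$ is a flat, finitely presented, closed $R$-subgroup scheme of $G$ and $G/L$ exists, $G\to G/L$ is an $L$-torsor by Lemma~\ref{lem:zariskiquotient} (this is also visible from the construction above). For generic triviality I would use the open cell: $\Omega=U^-\cdot P$ is stable under right translation by $L\subset P$, and identifying $\Omega\cong U^-\times L\times U$ as in (a) shows that $U^-\times U\to G/L$, $(u^-,u)\mapsto u^-uL$, is an open immersion onto $\overline\Omega:=\Omega/L$ over which the $L$-torsor $G\to G/L$ admits the section $(u^-,u)\mapsto u^-u$. Since $\Omega$ is dense in every fiber of $G\to\Spec R$ and $G\to G/L$ is open, $\overline\Omega$ is dense in every fiber of $G/L\to\Spec R$, hence dense in $G/L$; so $G\to G/L$ is trivial over the dense open subscheme $\overline\Omega$, i.e., generically trivial.

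I expect the main obstacle to be fact (a): producing the open cell --- equivalently, a cocharacter $\lambda$ realizing $(P,L)$, or an opposite parabolic --- over the general connected ring $R$, since Levi factors and such cocharacters need not exist globally even over a connected base; this is precisely the input furnished by \cite{Conrad,ConradGabberPrasad}. If $\lambda$ is available only Zariski-locally on $\Spec R$, parts (i) and (iii) still go through verbatim over all of $R$, since the filtration (b) and the quotient $G/L$ are built using only $P$; and for (ii) one first spreads out from the generic points of $\Spec R$ --- over which $G$, $P$, $L$ live over fields and the classical big cell is available --- to a dense open of $\Spec R$, and then checks, as in the previous paragraph, that the resulting trivializing open of $G/L$ is dense. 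A secondary point to verify carefully is that $L$ acts linearly on the graded pieces $V_i$ in (b), so that the twists $G\times^{P^{(i)}}V_i$ are honest vector bundles; this is where one uses that $U$ acts trivially on $V_i$ and that the filtration is the $\lambda$-weight filtration.
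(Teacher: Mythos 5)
Your proposal is correct, and it uses the same core ingredients as the paper: the dynamic description of $(P,L)$ by a cocharacter, the open cell $U^-\times P$ to produce the generic trivialization in (ii), and the canonical filtration of the unipotent radical with vector-group quotients to produce the tower in (iii). The one genuine difference is in (i): the paper obtains $G/P$ and $G/L$ directly from the quotient-existence theorems \cite[Theorems 2.3.1 and 2.3.6]{Conrad} and only then analyzes $G/L\to G/P$, whereas you build $G/L$ out of $G/P$ as the top of the tower, via fppf descent of affine morphisms through the intermediate subgroups $P^{(i)}=L\cdot U^{(i)}$; this is a legitimate variant which the paper itself records in Remark~\ref{rem:levisexist}(1), and it trades the general quotient machinery for the (routine but necessary) check that the affine-equivariance of the $P^{(i)}$--action on $V_i$ really is linear, so that the twists are honest vector bundles --- the paper makes the same point in (iii) via the $P$--equivariant structure and the identification $H^1_{\fppf}(X,\mathscr E)=H^1_{\Zar}(X,\mathscr E)$ for quasi-coherent sheaves, so your tower $G/P^{(i)}\cong G\times^P U/U^{(i)}$ is the paper's tower in different clothing. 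As for the point you flag as the main obstacle, the paper resolves it by citing Gille \cite[Th\'eor\`eme 9.3.1]{GilleLuminy}: over the connected ring $R$ the pair $(P,L)$ is globally of the form $(P_G(\lambda),Z_G(\lambda))$ --- this is exactly where connectedness of $R$ is used --- and in any case the open cell only requires the opposite parabolic attached to $(P,L)$, which exists globally by \cite[Expos\'e XXVI \S 4]{SGA3.3}; consequently your contingency plan of spreading out from the generic points of $\Spec R$ (which would be delicate for a general, possibly non-Noetherian, connected $R$) is never needed.
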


\begin{proof}
For later use, we observe that since $R$ is assumed connected and $L$ is presumed to exist, by Gille \cite[Th\'eor\`eme 9.3.1]{GilleLuminy}, there is a cocharacter $\lambda: \gm{} \to G$ such that $P = P_G(\lambda)$ and $L = Z_G(\lambda)$.  If $S$ is the spectrum of a field, which is the case we will use later, the fact that all pairs $(P,L)$ consisting of a parabolic together with a Levi factor, are of the form $(P_G(\lambda),Z_G(\lambda))$ for a suitable cocharacter $\lambda$ is contained in \cite[Proposition 2.2.9]{ConradGabberPrasad}.

For Point (i), begin by observing that since $P$ is a parabolic subgroup of $G$ it is a self-normalizing subgroup \cite[Corollary 5.2.8]{Conrad}.  The quotients $G/L$ and $G/P$ exist as smooth $R$--schemes by \cite[Theorems 2.3.1 and 2.3.6]{Conrad} (and, by Lemma~\ref{lem:zariskiquotient}, the morphisms $G \to G/L$ and $G \to G/P$ are an $L$--torsor and a $P$--torsor, respectively).

For Point (ii), set $U^- = U_G(-\lambda)$, i.e., the ``unipotent radical'' of an opposite parabolic.  We know that there is a dense open subscheme of $G$ isomorphic to $U^- \times P$ \cite[Theorem 4.1.7]{Conrad} (here and below, we will refer to this as the ``big cell'').  The image of this open subscheme in $G/L$, which is isomorphic to $U^- \times P/L$, is again open and dense since $G \to G/L$ is smooth and surjective.  The Levi decomposition yields an isomorphism of schemes $P \cong L \times U$, and thus an identification $P/L \cong U$.  Under these identifications, the unit map $U \to P$ provides a morphism $U^{-} \times U \to U^{-} \times L \times U$, which yields the required generic trivialization.

For Point (iii), let $U$ be the unique smooth closed normal $R$--subgroup scheme of $P$ whose geometric fibers coincide with the unipotent radicals of the geometric fibers of $P$, which is guaranteed to exist by \cite[Corollary 5.2.5]{Conrad}.  By the uniqueness assertion, $U \cong U_G(\lambda)$ for the character whose existence we observed in the first paragraph.  By \cite[Theorem 5.4.3]{Conrad}, $U$ admits a finite descending filtration by $Aut_{P/R}$--stable closed normal smooth $R$--subgroup schemes $U_i$ with successive subquotients $U_i/U_{i+1}$ isomorphic to $P$--equivariant vector bundles over $R$.  Moreover, the isomorphism $P/L \cong U$ described in Point (ii) is actually $P$--equivariant.

Now, the morphism $G/L \longrightarrow G/P$ is $G$--equivariant by definition.  The scheme-theoretic fiber over the identity coset in $G/P$ is isomorphic to the quotient $P/L$ and there is an induced $G$--equivariant isomorphism $G \times^P P/L \isomt G/L$ under which the morphism $G/L \to G/P$ is sent to the projection onto the first factor.  In particular, since $P/L \cong U$ is smooth, $G \times P/L \to G$ is smooth and since smoothness is fppf local on the base \cite[\href{http://stacks.math.columbia.edu/tag/02VL}{Tag 02VL Lemma 34.19.25}]{stacks-project}, we conclude that $G/L \to G/P$ is also smooth.  By discussion of the previous paragraph, the morphism $G/L \to G/P$ thus factors successively through morphisms of the form
\begin{equation}
\label{eqn:towerofvectorbundletorsors}
G \times^P U/U_{i+1} \longrightarrow G \times^P U/U_{i}.
\end{equation}
To finish the proof, it suffices to inductively establish that each morphism in (\ref{eqn:towerofvectorbundletorsors}) is a torsor under a vector bundle.

Each morphism $U/U_{i+1} \to U/U_i$ is, by construction, a torsor under the vector bundle $U_i/U_{i+1}$ and, as we observed above, provided with a $P$--equivariant structure.  If $\mathscr{E}$ is a quasi-coherent sheaf on a scheme $X$, then $H^1_{\fppf}(X,\mathscr{E}) = H^1_{\Zar}(X,\mathscr{E})$ by \cite[\href{http://stacks.math.columbia.edu/tag/03DR}{Tag 03DR Proposition  34.7.10}]{stacks-project}.  Since $H^1_{\fppf}(X,\mathscr{E})$ parameterizes fppf-torsors under the quasi-coherent sheaf $\mathscr{E}$, the $P$--equivariant structure on $U_i/U_{i+1}$ allows us to conclude, by fppf-descent, that $G \times^P U_i/U_{i+1}$ is a torsor under a vector bundle on $G/P$.  In other words, each morphism in (\ref{eqn:towerofvectorbundletorsors}) is again a torsor under the vector bundle $U_i/U_{i+1}$.
\end{proof}

\begin{rem}
\label{rem:levisexist}
A number of remarks are in order.
\begin{enumerate}
\item Since $R$ a connected ring, it is not necessary to assume in the statement above that $L$ exists; this follows from Conrad \cite[Corollary 5.4.8]{Conrad}.  If we were to work over a non-affine base scheme, parabolics need not have Levi factors (see \cite[Example 5.4.9]{Conrad} for more details).  By reorganizing the proof, the argument presented in Point (iii) actually shows that the quotient $G/L$ exists assuming we know $G/P$ to exist and the relevant results on the structure of $U$.
\item  By Lemma~\ref{lem:finitepresentation}, since $L$ is a smooth $R$--group scheme by assumption, $G \to G/L$ is \'etale locally trivial.  If $R$ is Noetherian and regular, then the morphism $G \to G/L$ being generically trivial is tantamount to $G \to G/L$ being Nisnevich locally trivial.  To prove this, it suffices to show that generically trivial $L$--torsors over Henselian local rings are trivial.  If $G$ is split reductive, then $L$ is as well, and the asserted triviality follows from Bia\l{}ynicki-Birula \cite[Proposition 2]{BialynickiBirula}. If $G$ is not necessarily split, then $L$ can be an arbitrary reductive group and one can appeal to Nisnevich \cite[Th\'eor\`eme 4.5]{Nisnevich} to deduce the required triviality result (Nisnevich makes a statement for semi-simple group schemes, but it is true more generally, see Fedorov and Panin \cite[\S 1.1]{FedorovPaninGS}).
\item If $G$ is split, it is possible to use translation of the big cell by elements of the Weyl group to produce an explicit Zariski local trivialization of $G \to G/L$.  In fact, even if $G$ is not split, to establish Zariski local triviality of $G \to G/L$ (or, equivalently, $G \to G/P$), it suffices to know that the $G(R)$--translates of the big-cell form an open cover of $G/L$ (or $G/P$).  If $R$ is an infinite field, this kind of result follows from the fact that the image of $G(R)$ in $G/P(R)$ is Zariski dense (via the unirationality of $G$).
\item In contrast, if $R$ is a finite field (and $G$ is non-split), it is {\em a priori} not obvious that $G(R)$ translates of the big cell cover $G/L$ (or $G/P$).  Nevertheless, assuming the Grothendick--Serre conjecture, one knows that $G \to G/L$ is Zariski locally trivial.  If $R$ is the spectrum of a finite field, the Grothendieck--Serre conjecture was established by Gabber for reductive groups coming from the ground field (unpublished), but another proof of a more general case was recently given by Panin \cite{PaninGSFinite} (see also \cite{FedorovPaninGS}).
\end{enumerate}
\end{rem}

Write $SO_n$ for the split special orthogonal group over $R$.  We restrict attention to the case where $2$ is a unit in $R$ so we can view $SO_n$ as the $R$--subgroup scheme of $GL_{n}$ consisting of automorphisms of the standard hyperbolic form $q_n$ with trivial determinant (see, e.g., Conrad \cite[Definition C.1.2]{Conrad}); for more details on special orthogonal groups, see \cite[Appendix C]{Conrad}).

\begin{lem}
\label{lem:orthogonalquotient}
If $R$ is a ring in which $2$ is invertible, then the following statements hold.
\begin{enumerate}
\item[(i)] If $n \geq 3$, the quotient $SO_n/SO_{n-1}$ exists and is isomorphic to a quadric hypersurface in ${\mathbb A}^n_R$ defined by the equation $q_n = 1$.
\item[(ii)] If $n \geq 3$, the projection morphism $SO_n \to SO_n/SO_{n-1}$ makes $SO_n$ into a Zariski locally trivial $SO_{n-1}$--torsor over the quotient.
\end{enumerate}
\end{lem}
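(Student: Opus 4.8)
I would deduce both assertions from a single statement: that the orbit morphism of $SO_n$ acting on the quadric $Q:=\{q_n=1\}\subset\A^n_R$ is a Zariski-locally trivial $SO_{n-1}$-torsor. Throughout, identify $SO_{n-1}$ with the stabiliser in $SO_n$ of a vector $e$ with $q_n(e)=1$ (the standard embedding; see \cite[Appendix C]{Conrad}), and write $b$ for the polarisation of $q_n$, which is non-degenerate because $2\in R^{\times}$. First I would check that $Q$ is smooth and affine over $R$ of relative dimension $n-1$: the differential of $q_n-1$ at a point $v$ is $b(v,-)$, and on $Q$ this functional is surjective since $b(v,v)=2q_n(v)=2\in R^{\times}$, so the Jacobian criterion applies. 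The tautological action of $SO_n$ on $\A^n_R$ preserves $Q$, and the orbit morphism $\rho\colon SO_n\to Q$, $g\mapsto g\cdot e$, is equivariant, is invariant for right translation by $SO_{n-1}$, and has scheme-theoretic fibre $SO_{n-1}$ over $e$.

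The crux is to exhibit sections of $\rho$ over a two-chart open cover of $Q$. For $w$ with $q_n(w)\in R^{\times}$, the reflection $\tau_w(x)=x-\tfrac{b(w,x)}{q_n(w)}\,w$ is an isometry of $q_n$ with $\det\tau_w=1-\tfrac{b(w,w)}{q_n(w)}=-1$ (rank-one determinant identity). On $Q$ one has $q_n(e+v)=1+1+b(e,v)=2+b(e,v)$, which is affine-linear in $v$, so $U_e:=\{v\in Q:2+b(e,v)\in R^{\times}\}$ is a principal open subscheme of $Q$; over it the denominator $q_n(e+v)$ is invertible, so $v\mapsto s_e(v):=\tau_{e+v}\circ\tau_e$ defines a morphism $U_e\to SO_n$ (a product of two reflections, hence of determinant $1$). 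Using $b(e+v,e)=2+b(e,v)=q_n(e+v)$ on $U_e$, a one-line computation gives $s_e(v)\cdot e=v$, so $s_e$ is a section of $\rho$ over $U_e$. Running the same construction with $-e$ in place of $e$ and post-composing with a fixed $\sigma\in SO_n(R)$ sending $e$ to $-e$ — for instance $\sigma=\tau_{w'}\circ\tau_e$ with $w'\in e^{\perp}$ anisotropic, which exists because $n\geq 3$ — produces a section $s_{-e}$ of $\rho$ over $U_{-e}:=\{v\in Q:2-b(e,v)\in R^{\times}\}$. Since $2\in R^{\times}$, the identity $(2+b(e,v))+(2-b(e,v))=4$ shows that $U_e$ and $U_{-e}$ cover $Q$.

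The two assertions now follow formally. Given a section $s$ of $\rho$ over an open $U\subseteq Q$, the morphisms $(h,v)\mapsto s(v)\cdot h$ and $g\mapsto(s(\rho(g))^{-1}g,\ \rho(g))$ are mutually inverse isomorphisms $SO_{n-1}\times U\isomto\rho^{-1}(U)$ compatible with $\rho$ and with the $SO_{n-1}$-action — the second is well-defined because $s(\rho(g))^{-1}g$ fixes $e$. Applying this over $U_e$ and $U_{-e}$ shows that $\rho\colon SO_n\to Q$ is a Zariski-locally trivial $SO_{n-1}$-torsor in the sense of Definition~\ref{def:torsor}; in particular $\rho$ is faithfully flat, so $Q$ represents the quotient sheaf $SO_n/SO_{n-1}$, which therefore exists as a scheme and is identified with $Q$, smooth and affine over $R$ by the first paragraph. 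This gives (i), and the local trivialisations over $U_{\pm e}$ give (ii).

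I expect the substantive point to be the Zariski-local (as opposed to merely étale-local) triviality in (ii): for $n\geq 4$ the group $SO_{n-1}$ is not special, so $SO_{n-1}$-torsors need not be Zariski-locally trivial in general, and it is precisely the availability of the explicit double-reflection sections over the two charts $U_{\pm e}$ that makes this particular torsor Zariski-locally trivial. The remaining ingredients — the determinant and fixed-point identities for $\tau_w$ over a general ring, the existence of the anisotropic vector $w'\in e^{\perp}$, and the compatibility of Conrad's standard embedding $SO_{n-1}\subset SO_n$ with the stabiliser of a norm-one vector — are elementary verifications.
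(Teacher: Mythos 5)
Your argument is correct, but it follows a genuinely different route from the paper's. The paper gets existence of the quotient abstractly (reduce to $R=\Z[1/2]$ and invoke Anantharaman's theorem over a Dedekind base), identifies it with the quadric by embedding $SO_n/SO_{n-1}$ into $SL_n/SL_{n-1}\cong\{q_{2n}=1\}$ via the ``first row and first column of the inverse'' map and the relation $X^{-1}=JX^T$, and then proves Zariski-local triviality indirectly: a map $X\to SO_n/SO_{n-1}$ classifies an $SO_{n-1}$--torsor that becomes trivial in $SO_n$, and Witt cancellation over local rings with $2$ invertible forces such a torsor to be trivial locally. You instead build everything by hand: the double-reflection sections $s_e(v)=\tau_{e+v}\circ\tau_e$ over the two principal opens $U_{\pm e}$ (which cover $Q$ because $(2+b(e,v))+(2-b(e,v))=4$ is a unit) trivialize the orbit map directly, and the torsor property then yields representability of the quotient sheaf, so you need neither Anantharaman nor Witt cancellation. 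What your approach buys is an explicit two-chart trivialization and a self-contained proof; what the paper's buys is brevity by citation and no need to fix a base point or check reflection identities. The only point you should not wave away too quickly is the identification of the paper's $SO_{n-1}\subset SO_n$ (defined there as $SO_n\cap SL_{n-1}$) with the scheme-theoretic stabilizer of your norm-one vector $e$: for $n$ even the restricted form on $e^\perp$ is only similar (a unit multiple of $q_{n-1}$), not isometric, to the standard form, so one must use that orthogonal group schemes are unchanged under unit scaling of the form; this is elementary, and the paper is equally brisk (``in suitable coordinates''), but it is the one convention check your proof genuinely relies on. (Also, your $\sigma$ should be pre-composed, i.e.\ the section over $U_{-e}$ is $v\mapsto \tau_{-e+v}\circ\tau_{-e}\circ\sigma$, a harmless slip of wording.)
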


\begin{proof}
Without loss of generality, we can take $R = \Z[1/2]$, which is Noetherian of dimension $\leq 1$.  Since $SO_{n-1}$ is a closed $R$--subgroup scheme of $SO_n$, the quotient $SO_n/SO_{n-1}$ exists as a scheme \cite[Th\'eor\`eme 4.C]{Anantharaman}.

To identify this quotient with the quadric in the statement, we proceed as follows.  Since $SO_{n-1}=SO_n\cap SL_{n-1}$ inside of $SL_n$, the inclusion $SO_n\subset SL_n$ induces a monomorphism $SO_n/SO_{n-1} \hookrightarrow SL_n/SL_{n-1}$.  Note that if $A$ is an $R$--algebra, the map sending $X \in SL_n(A)$ to its first row and the first column of its inverse determines an isomorphism $SL_n/SL_{n-1} \cong \Spec R[x_1,\ldots,x_{2n}]/(q_{2n} - 1)$.  If we restrict $X \in SO_n(A)$ and if $J$ is the symmetric matrix corresponding to the bilinear form associated with $q_n$, then the orthogonality condition imposes the relation $X^{-1} = JX^T$.  Using this observation, it is straightforward to check that the image is isomorphic, in suitable coordinates, to a sub-quadric given by the equation $q_n = 1$.

For the second statement, observe that morphisms $X\to SO_n/SO_{n-1}$ classify $SO_{n-1}$--torsors which are trivial after stabilization to $SO_n$--torsors. The Witt cancellation theorem (see Milnor and Husemoller \cite[Lemma 6.3]{MilnorHusemoller}) implies that, over a local ring in which $2$ is invertible, such an $SO_{n-1}$--torsor is already trivial.
\end{proof}

\subsection{The local-to-global principle for torsors under linear group schemes}
\label{ss:localtoglobal}
In this section we establish a local-to-global principle or ``Quillen patching'' for torsors under linear $R$--group schemes in the sense of Definition~\ref{defn:lineargroupscheme}.  The main result of this section is Theorem~\ref{thm:localtoglobalprinciple}, which is a multi-variable analog of a result of Quillen \cite[Theorem 1]{Quillen} along the lines of Lam \cite[Theorem V.1.6]{Lam}.  As will be clear from the presentation, the argument follows quite closely that for projective modules given in \cite[Chapter V.1]{Lam}.

That the local-to-global principle holds for torsors under linear group schemes is certainly ``well-known to experts'', under suitable hypotheses.  For example, Raghunathan \cite{Raghunathan78} states (without proof) that Quillen's local-to-global principle holds for linear algebraic groups over a field and Bass--Connell--Wright developed an axiomatic method to establish such results  \cite[Proposition 3.1]{BCW}; in particular, the latter approach applies for various classical groups \cite[Remark 4.15.4]{BCW} over a general base ring.  Nevertheless, since we could not find a suitable published reference for precisely what we needed, in the interest of completeness, we decided to collect the necessary results here.

\subsubsection*{Modifying automorphisms}
We begin by generalizing \cite[Lemma 1]{Quillen} (also \cite[Corollary V.1.2]{Lam}) and  \cite[Corollary V.1.3]{Lam} to linear $R$--group schemes over an arbitrary commutative ring $R$.  The following pair of results are due to Moser \cite[Lemmas 3.5.3--3.5.5]{Moser} (though our hypotheses differ slightly); we include them here for the convenience of the reader.

\begin{lem}
\label{lem:modifyingautomorphismsgtorsors}
Let $R$ be a commutative ring, let $G$ be a linear $R$--group scheme, let $f \in R$, and let $\theta(t) \in G(R_f[t])$ be such that $\theta(0) = 1 \in G(R_f)$.  There exists an integer $s \geq 0$ such that for any $a,b \in R$ with $a-b \in f^s R$, there exists $\psi \in G(R[t])$ with $\psi(0) = 1$ and such that $\psi_f(t) = \theta(at)\theta(bt)^{-1} \in G(R_f[t])$.
\end{lem}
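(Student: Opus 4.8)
\emph{Overall strategy.} The plan is to follow the classical method of Quillen \cite[Lemma~1]{Quillen} (cf.\ \cite[Corollary~V.1.2]{Lam}), using a fixed finitely presented closed immersion $G\hookrightarrow GL_{n,R}$ to reduce the statement to explicit manipulations with matrices over polynomial rings. First I would regard $\theta(t)\in GL_n(R_f[t])$ via the embedding, note $\theta(0)=1$, and ``universalize'' the element $\theta(at)\theta(bt)^{-1}$: introducing two new indeterminates $V,W$, I would form
\[
\Phi(t,V,W)\ :=\ \theta\bigl((V+W)t\bigr)\,\theta(Vt)^{-1}\ \in\ G\bigl(R_f[t,V,W]\bigr)\subseteq GL_n\bigl(R_f[t,V,W]\bigr).
\]
Because $\Phi$ restricts to the identity matrix both along $W=0$ and along $t=0$, every entry of $\Phi-1$ is divisible by $tW$, so that $\Phi=1+tW\,\Xi$ for a matrix $\Xi$ over $R_f[t,V,W]$; since $\Xi$ involves only finitely many negative powers of $f$, I would fix $s_0\ge 0$ and a matrix $\widetilde\Xi$ over $R[t,V,W]$ lifting $f^{s_0}\Xi$.

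\emph{Specializing and clearing denominators.} Next, for $a,b\in R$ with $a-b=f^{s}c$ (where $s$ is to be chosen large), I would substitute $V\mapsto b$ and $W\mapsto f^{s}c$ into $\Phi$; this recovers $\theta(at)\theta(bt)^{-1}$, and because $W\mapsto f^{s}c$ contributes an extra factor $f^{s}$, clearing denominators exhibits $\theta(at)\theta(bt)^{-1}$ as the image in $GL_n(R_f[t])$ of the explicit matrix
\[
\psi\ :=\ 1+f^{\,s-s_0}\,c\,t\,\widetilde\Xi(t,b,f^{s}c)\ \in\ M_n(R[t]),
\]
which satisfies $\psi(0)=1$ and $\psi\equiv 1\pmod{f^{\,s-s_0}}$. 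To see that $\psi$ is invertible over $R[t]$ I would argue via the determinant: $\det\psi$ has constant term $1$, lies in $1+f^{\,s-s_0}R[t]$, and maps to a unit of $R_f[t]$, hence to a polynomial whose nonconstant coefficients are nilpotent in $R_f$; one checks directly that an element of the form $f^{\,s-s_0}\mu$ with $s-s_0\ge 1$ whose image in $R_f$ is nilpotent is itself nilpotent in $R$, so the nonconstant coefficients of $\det\psi$ are nilpotent and $\psi\in GL_n(R[t])$.

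\emph{Landing in $G$, and the choice of $s$.} The remaining, and main, point is to show $\psi\in G(R[t])$, which I expect to require enlarging $s$. Writing $G\subseteq GL_{n,R}$ as the common zero locus of finitely many polynomials $g_1,\dots,g_r$ in the matrix entries, I would note that $g_j(\psi)\in f^{\,s-s_0}R[t]$ (because $\psi\equiv 1\pmod{f^{\,s-s_0}}$ and $g_j$ vanishes at the identity matrix) while $g_j(\psi)$ maps to $0$ in $R_f[t]$ (because $\theta(at)\theta(bt)^{-1}\in G(R_f[t])$) and is therefore $f$-torsion. To reconcile these two facts uniformly in $a,b$, I would run the construction with $b$ and $c$ replaced by indeterminates: all coefficients of the resulting $g_j(\psi)$ then lie in the finitely generated---hence Noetherian---$\Z$-subalgebra $R_0\subseteq R$ generated by $f$ and the finitely many coefficients of the $g_j$ and of $\widetilde\Xi$, so the $f$-torsion submodule of $R_0$ is killed by a single power $f^{N_0}$ with $N_0$ independent of $a,b$. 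Taking $s\ge s_0+N_0$, any coefficient $\eta=f^{\,s-s_0}\zeta$ of $g_j(\psi)$ satisfies $f^{N_0}\eta=0$, hence $f^{N_0}\zeta=0$ and so $\eta=f^{(s-s_0)-N_0}\bigl(f^{N_0}\zeta\bigr)=0$; thus $g_j(\psi)=0$ for all $j$ and $\psi\in G(R[t])$.

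\emph{Main obstacle.} The two-variable trick and the clearing of denominators are the routine part of the argument; the genuinely delicate step is the last one---guaranteeing that $\psi$, produced a priori only in $GL_n(R[t])$, actually satisfies the defining equations of $G$ over $R[t]$ and not merely over $R_f[t]$. This forces one to keep simultaneous track of divisibility by powers of $f$ and of $f$-torsion in $R$, and it is precisely the uniform annihilator $f^{N_0}$ coming from the Noetherian reduction that makes a single choice of $s$ work for all $a$ and $b$.
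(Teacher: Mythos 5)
Your proof is correct, and it follows the same Quillen-style skeleton as the paper (embed $G$ in $GL_n$ via the given finitely presented closed immersion, form the universal element $\theta((x+f^sy)t)\theta(xt)^{-1}$ in two auxiliary variables, clear denominators by feeding a power of $f$ into the second variable), but it resolves the two delicate points by genuinely different means. For the $GL_n$--level lift the paper simply quotes the proof of Quillen's Lemma~1 (Lam, Theorem V.1.1) to obtain a lift in $GL_n(R[t,x,y])$ equal to $1$ at $t=0$, whereas you construct the candidate $1+f^{s-s_0}ct\,\widetilde\Xi$ explicitly and verify $\det\psi\in R[t]^\times$ via the observation that an element $f^e\mu$ with $e\geq 1$ and nilpotent image in $R_f$ is nilpotent in $R$; that is a clean, self-contained alternative. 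For the main point---forcing the lift to satisfy the equations of $G$ over $R[t]$ rather than only over $R_f[t]$---the paper keeps the auxiliary variable alive and performs one further substitution $y\mapsto f^iy$: for each generator $r$ of the ideal of $G$, the element $P=\varphi(r)\in R[t,x][y]$ is divisible by $y$ (the lift is the identity at $y=0$) and killed by a single power of $f$ (it dies in $R[t,x]_f[y]$), so writing $P=yQ$ gives $f^{i_r}Q=0$ and hence $P(f^iy)=0$ for $i\geq i_r$; no finiteness hypothesis beyond the finitely many coefficients of the fixed polynomial $P$ is needed, and in particular no Noetherian reduction. You instead trade $y$--divisibility for divisibility by $f^{s-s_0}$ and then need a torsion exponent uniform in $a,b$, which you extract by Noetherian approximation inside the finitely generated $\Z$--subalgebra $R_0$. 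This works, but one step should be stated more carefully: from $f^{N_0}\eta=0$ with $\eta=f^{s-s_0}\zeta$ you cannot conclude $f^{N_0}\zeta=0$ unless $\zeta$ itself lies in $R_0$ (a priori it lies only in $R$ and is merely $f$--power torsion). This is easily arranged: expanding $g_j\bigl(1+f^{s-s_0}M\bigr)$ at the universal level, where $M=Wt\,\widetilde\Xi(t,V,f^sW)$ has entries in $R_0[t,V,W]$ and the $g_j$ have coefficients in $R_0$, lets you choose the cofactor $\zeta$ in $R_0$, after which your inequality $s\geq s_0+N_0$ (together with $s-s_0\geq 1$ for the determinant step) finishes the argument. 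In summary, the paper's substitution trick buys a shorter proof valid with no auxiliary finiteness argument, while your route avoids invoking Quillen's lemma as a black box at the cost of the Noetherian-approximation bookkeeping.
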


\begin{proof}
Since $G$ is a linear $R$--group scheme, by definition there is a finitely presented closed immersion $G \to GL_{n}$.  For $s \in {\mathbb N}$, set $\psi_s(t,x,y) := \theta((x + f^sy)t)\theta(xt)^{-1} \in G(R_f[t,x,y])$.  It suffices to show that there exists $s$ such that $\psi_s$ can be lifted to an element $\psi_s \in G(R[t,x,y])$.  Indeed, in that case, by specializing with $x = b$, $a = b + f^s\alpha$, we see that $\theta(at)\theta(bt)^{-1} = \psi_s(t,b,\alpha)$ lifts as well.  By the proof of \cite[Lemma 1]{Quillen}, we know that there exists $s$ such that $\psi_s(t,x,y)$ lifts to an element of $GL_n(R[t,x,y])$ and such that $\psi_s(0,x,y) = 1$ (see also \cite[Theorem V.1.1]{Lam}).  Observe that, by definition, $\psi_s(t,x,0) = 1$ and thus $\psi_s(t,x,0) \in G(R[x,t])$.

It remains to show that there exists $i\geq 0$ such that $\psi_s(t,x,f^iy) \in G(R[t,x,y])$.  We first recast this in ring-theoretic terms.  Set $A := R[t,x]$, let $B$ be the coordinate ring of $GL_n$, and let $I \subset B$ be the finitely generated ideal defining $G$.  The lift of $\psi_s$ is given by a homomorphism $\varphi: B \to A[y]$, and we want to show that, for some $i\geq 0$, $\varphi(-)(f^iy)$ vanishes on $I$.  We claim that, for every $r\in I$, there exists an integer $i_r$ such that  $\varphi(r)(f^i y) = 0$ for $i \geq i_r$.  If $J\subset I$ is a finite generating set and $i = \max_{r\in J}{i_r}$, then $i$ will have the desired property.

Note that $\varphi$ has the following properties: if $ev_0: A[y] \to A$ is the evaluation homomorphism, then the composites $ev_0 \circ \varphi: B \to A$ and $B \to A[y] \to A_f[y]$ both vanish on $I$.  If $r\in I$ and $P:=\phi(r)\in A[y]$, these properties imply that $P = y Q$ for some $Q \in A[y]$ and that $f^{i_r}P=0$ for some $i_r\geq 0$.  Combining these two observations, we have $0=f^{i_r}P = f^{i_r}yQ$.  Therefore, $f^{i_r}Q = 0$ as well.  Thus, $P(f^i y) =  f^{i}yQ(f^iy) = 0$ for all $i \geq i_r$, which is what we wanted to show.
\end{proof}

\begin{lem}
\label{lem:modifyautomorphisms}
Let $R$ be a commutative ring and $G$ a linear $R$--group scheme.  Given $f_0,f_1 \in R$ such that $f_0R + f_1R = R$, and $\theta \in G(R_{f_0f_1}[t])$ with $\theta(0) = 1$, then we can find $\tau_i \in G(R_{f_i}[t])$ with $\tau_i(0) = 1$ such that $\theta = \tau_0 \tau_{1}^{-1}$.
\end{lem}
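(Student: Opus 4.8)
The plan is to imitate the classical Quillen--Lam patching argument, using Lemma~\ref{lem:modifyingautomorphismsgtorsors} as the key input that replaces the elementary ring-theoretic lifting lemma of Quillen. First I would observe that, since $f_0 R + f_1 R = R$, for any exponent $s \geq 0$ we also have $f_0^s R + f_1^s R = R$, so there exist $c_0, c_1 \in R$ with $c_0 f_0^s + c_1 f_1^s = 1$. The idea is to factor $\theta$ by telescoping: write $1 = c_0 f_0^s + c_1 f_1^s$ and, accordingly, split the ``time'' variable $t$ as $t = (c_1 f_1^s) t + (c_0 f_0^s) t$, so that
\[
\theta(t) = \theta\bigl((c_1 f_1^s)t + (c_0 f_0^s)t\bigr) = \bigl[\theta(t)\,\theta((c_0 f_0^s)t)^{-1}\bigr]\cdot\bigl[\theta((c_0 f_0^s)t)\bigr].
\]
The first bracketed factor is, after setting $a = 1$ and $b = c_0 f_0^s$ (so $a - b = c_1 f_1^s \in f_1^s R$), of the form $\theta(at)\theta(bt)^{-1}$ treated over $R_{f_1}$; the second factor is $\theta((c_0 f_0^s)t)$, which over $R_{f_0}$ visibly ``comes from'' $G(R_{f_0}[t])$ once we note $c_0 f_0^s$ makes sense there, but more to the point it is of the form $\theta(at)\theta(bt)^{-1}$ with $a = c_0 f_0^s$, $b = 0$, over $R_{f_0}$ (since $a - b \in f_0^s R$).

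Concretely: apply Lemma~\ref{lem:modifyingautomorphismsgtorsors} to $G_{f_1} := G \times_R R_{f_1}$, the element $f = f_0$ viewed in $R_{f_1}$... actually the cleaner route is to apply Lemma~\ref{lem:modifyingautomorphismsgtorsors} twice over $R$ directly. Apply it once with the chosen $f = f_0$: it produces an integer $s_0$ such that whenever $a - b \in f_0^{s_0} R$, the element $\theta(at)\theta(bt)^{-1} \in G(R_{f_0}[t])$ descends to some $\tau_0 \in G(R[t])$ with $\tau_0(0) = 1$ — wait, that is the wrong direction; Lemma~\ref{lem:modifyingautomorphismsgtorsors} produces descent to $G(R[t])$, not to $G(R_{f_1}[t])$. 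So instead I would apply Lemma~\ref{lem:modifyingautomorphismsgtorsors} to the ring $R_{f_1}$, the element $f = f_0$ (its image in $R_{f_1}$), and $\theta \in G(R_{f_1 f_0}[t]) = G((R_{f_1})_{f_0}[t])$: this yields $s_1$ such that for $a - b \in f_0^{s_1} R_{f_1}$ there is $\tau_1 \in G(R_{f_1}[t])$ with $\tau_1(0)=1$ and $(\tau_1)_{f_0} = \theta(at)\theta(bt)^{-1}$. Symmetrically, applying it to $R_{f_0}$, the element $f_1$, and $\theta \in G(R_{f_0 f_1}[t])$ yields $s_0$ and, for $a - b \in f_1^{s_0} R_{f_0}$, an element $\tau_0 \in G(R_{f_0}[t])$ with $\tau_0(0) = 1$ and $(\tau_0)_{f_1} = \theta(at)\theta(bt)^{-1}$.

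Now set $s = \max(s_0, s_1)$ and pick $c_0, c_1 \in R$ with $c_0 f_0^s + c_1 f_1^s = 1$. Take $a = 1$, $b = c_1 f_1^s$, so $a - b = c_0 f_0^s \in f_0^s R_{f_0}$, giving $\tau_0 \in G(R_{f_0}[t])$ with $(\tau_0)_{f_1} = \theta(t)\theta((c_1 f_1^s)t)^{-1}$; and take $a = c_1 f_1^s$, $b = 0$, so $a - b \in f_1^s R_{f_1}$, giving $\tau_1 \in G(R_{f_1}[t])$ with $(\tau_1)_{f_0} = \theta((c_1 f_1^s)t)\theta(0)^{-1} = \theta((c_1 f_1^s)t)$. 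Then $\tau_0(0) = \tau_1(0) = 1$ and, after base change to $R_{f_0 f_1}$, we get $(\tau_0)_{f_1 f_0}\cdot (\tau_1)_{f_0 f_1} = \theta(t)\theta((c_1 f_1^s)t)^{-1}\theta((c_1 f_1^s)t) = \theta(t)$, i.e.\ $\theta = \tau_0 \tau_1$. Replacing $\tau_1$ by $\tau_1^{-1}$ is a cosmetic relabeling to match the stated form $\theta = \tau_0 \tau_1^{-1}$ (equivalently, run the argument with $b = 0, a = c_1 f_1^s$ producing $\theta((c_1f_1^s)t)^{-1}$ in the appropriate slot). The main obstacle is bookkeeping: one must be careful that the two applications of Lemma~\ref{lem:modifyingautomorphismsgtorsors} are to the \emph{localized} base rings $R_{f_i}$ rather than to $R$ itself, so that the outputs $\tau_i$ live in $G(R_{f_i}[t])$ as required, and that the factorization identity holds already over $R_{f_0 f_1}$ (not merely over some further localization) — this is exactly where the choice $c_0 f_0^s + c_1 f_1^s = 1$ in $R$, rather than in a localization, is used.
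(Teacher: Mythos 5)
Your proposal is correct and follows essentially the same route as the paper's proof: the telescoping factorization $\theta(t)=\bigl[\theta(t)\theta(bt)^{-1}\bigr]\theta(bt)$ with $b\in f_1^sR$ and $1-b\in f_0^sR$, together with two applications of Lemma~\ref{lem:modifyingautomorphismsgtorsors} over the localized base rings $R_{f_0}$ and $R_{f_1}$. The only blemish is an index swap in your final paragraph (when $a-b\in f_0^sR$ the factor $\theta(at)\theta(bt)^{-1}$ descends to $G(R_{f_1}[t])$, not $G(R_{f_0}[t])$), which is harmless since the hypothesis is symmetric in $f_0$ and $f_1$ and one gets the stated form $\theta=\tau_0\tau_1^{-1}$ by the relabeling you already indicate.
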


\begin{proof}
Let $\theta(t) \in G(R_{f_0f_1}[t])$.  We can apply Lemma~\ref{lem:modifyingautomorphismsgtorsors} to the localizations $R_{f_0} \to R_{f_0f_1}$ and $R_{f_1} \to R_{f_1f_0}$: pick an integer $s$ that suffices for both localizations.  For any $b \in R$, we can write
\[
\theta(t) = [\theta(t)\theta(bt)^{-1}] \theta(bt).
\]
If $f_0R + f_1R = R$, then the same thing is true for $f_0^s$ and $f_1^s$.  Thus, we can pick $b \in f_1^s R$ such that $1-b \in f_0^s R$. In that case, $\theta(t)\theta(bt)^{-1} \in G(R_{f_1}[t])_{f_0}$ and $\theta(bt) \in G(R_{f_0}[t])_{f_1}$ lift to elements $\tau_1$ and $\tau_0$ with the stated properties.
\end{proof}

\begin{rem}
Lemma~\ref{lem:modifyingautomorphismsgtorsors} implies ``Axiom Q'' (in the sense of Bass, Connell, and Wright \cite[\S 1.1]{BCW}) holds for the functor on $R$--algebras determined by $G$.  Lemma~\ref{lem:modifyautomorphisms} essentially corresponds to \cite[Theorem 2.4]{BCW}.
\end{rem}

\subsubsection*{The local-to-global principle}
Let $R$ be a commutative ring and suppose $G$ is a linear $R$--group scheme.  If $A$ is a commutative $R$--algebra, by a $G$--torsor over $A$ we will mean a $G$--torsor over $\Spec A$; by assumption our $G$--torsors are locally trivial in the fppf-topology (see Definition~\ref{def:torsor} and the discussion just prior to Lemma~\ref{lem:finitepresentation} for more details).  A $G$--torsor over $A[t_1,\ldots,t_n]$ that is pulled back from a $G$--torsor over $A$ will be called {\em extended from $A$}.  For the remainder of this section, we will essentially confine our attention to a {\em fixed} $G$--torsor ${\mathscr P}$, which will be important for subsequent applications.

\begin{prop}
\label{prop:quillenpatching1var}
Let $R$ be a commutative ring.  If $\mathscr{P}$ is a $G$--torsor over $R[t]$, then the set $Q({\mathscr P})$ consisting of $g \in R$ such that ${\mathscr P}|_{\Spec R_g[t]}$ is extended from $R_g$ is an ideal in $R$.
\end{prop}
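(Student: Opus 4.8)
The plan is to follow the classical Quillen-patching argument (compare Lam \cite[Chapter V.1]{Lam}) adapted to $G$--torsors, using Lemma~\ref{lem:modifyautomorphisms} as the replacement for the elementary-matrix manipulations available for projective modules. First I would record the elementary reductions. It is clear that $Q(\mathscr P)$ is closed under multiplication by arbitrary elements of $R$: if $\mathscr P|_{\Spec R_g[t]}$ is pulled back from $R_g$ along the projection $\Spec R_g[t] \to \Spec R_g$, then base-changing along $R_g \to R_{gh}$ shows $\mathscr P|_{\Spec R_{gh}[t]}$ is pulled back from $R_{gh}$. Also $0 \in Q(\mathscr P)$ trivially. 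The substance of the statement is therefore closure under addition: given $f_0, f_1 \in Q(\mathscr P)$, one must show $f_0 + f_1 \in Q(\mathscr P)$. By the multiplicative closure just noted, it is harmless to replace $f_0, f_1$ by suitable multiples; concretely, setting $g = f_0 + f_1$ and localizing at $g$, we have $f_0/g$ and $f_1/g$ in $Q(\mathscr P|_{\Spec R_g[t]})$ and they generate the unit ideal in $R_g$, so it suffices to prove: \emph{if $f_0 R + f_1 R = R$ and $f_0, f_1 \in Q(\mathscr P)$, then $1 \in Q(\mathscr P)$, i.e., $\mathscr P$ is extended from $R$.}

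For this reduced statement I would argue as follows. Since $f_i \in Q(\mathscr P)$, there are $G$--torsors $\mathscr P_i$ over $R_{f_i}$ together with isomorphisms $\alpha_i\colon \mathscr P|_{\Spec R_{f_i}[t]} \isomto p^*\mathscr P_i$ over $R_{f_i}[t]$, where $p$ is the projection. Evaluating at $t = 0$ identifies $\mathscr P_i$ with $\mathscr P|_{\Spec R_{f_i}}$ (the restriction along the zero section), so after adjusting $\alpha_i$ we may assume $\mathscr P_i = \mathscr P|_{\Spec R_{f_i}}$ and that $\alpha_i$ reduces to the identity at $t = 0$. Pulling back $\mathscr P$ along the zero section $\Spec R \to \Spec R[t]$ produces a torsor $\mathscr P_0$ over $R$; I claim the natural comparison map $p^*\mathscr P_0 \to \mathscr P$ of torsors over $R[t]$ is an isomorphism, which is exactly the assertion that $\mathscr P$ is extended. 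This comparison map becomes an isomorphism after base change to $R_{f_0}[t]$ and to $R_{f_1}[t]$ (that is what $\alpha_0, \alpha_1$ supply, since both torsors agree with their restrictions to $t=0$ there), and $\{\Spec R_{f_0}[t], \Spec R_{f_1}[t]\}$ is a Zariski cover of $\Spec R[t]$ because $f_0 R + f_1 R = R$; hence the comparison map is an isomorphism. \emph{Wait} — this naive argument is too quick: it only uses that $\mathscr P$ restricted to each $\Spec R_{f_i}[t]$ is \emph{isomorphic} to something extended, not that the two given extensions are compatible on the overlap, so one cannot directly descend. The genuine content is to modify the isomorphisms $\alpha_i$ so that they agree on $\Spec R_{f_0 f_1}[t]$.

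This is where Lemma~\ref{lem:modifyautomorphisms} enters, and it is the main obstacle. Over $R_{f_0 f_1}[t]$ we have two trivializations of $\mathscr P$ against extended torsors which are canonically identified at $t = 0$; their discrepancy is an automorphism, i.e., an element $\theta(t) \in G(R_{f_0 f_1}[t])$ with $\theta(0) = 1$ (here one uses that $G$--torsors form a stack and that an automorphism of the extended torsor $p^*(\mathscr P|_{\Spec R_{f_0f_1}})$ over $R_{f_0f_1}[t]$ restricting to the identity at $t=0$ is given by such a $\theta$, possibly after further localization to trivialize the torsor — one may reduce to the trivial torsor since everything in sight is fppf-locally trivial and the patching is Zariski-local in $R$). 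By Lemma~\ref{lem:modifyautomorphisms}, applied to $f_0, f_1$ and $\theta$, we can factor $\theta = \tau_0 \tau_1^{-1}$ with $\tau_i \in G(R_{f_i}[t])$ and $\tau_i(0) = 1$. Replacing $\alpha_i$ by $\tau_i^{-1}\alpha_i$ (which still reduces to the identity at $t=0$, hence does not disturb the identification of $\mathscr P_i$ with $\mathscr P|_{\Spec R_{f_i}}$), the modified trivializations now agree on $\Spec R_{f_0 f_1}[t]$. By descent for $G$--torsors (Definition~\ref{def:torsor}, or the stack property recalled after it) along the Zariski cover $\{\Spec R_{f_i}[t]\}$, the torsors $p^*\mathscr P_i$ glue with $\mathscr P$ to exhibit $\mathscr P \cong p^* \mathscr P_0$ for a torsor $\mathscr P_0$ over $R$; that is, $1 \in Q(\mathscr P)$. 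I expect the only delicate bookkeeping to be the reduction allowing one to view the transition automorphism $\theta$ as an honest element of $G(R_{f_0f_1}[t])$ — handled by first passing to a finer Zariski cover of $\Spec R$ over which the relevant torsors trivialize, proving the extension statement there, and then reassembling, all of which is compatible with the ideal structure of $Q(\mathscr P)$.
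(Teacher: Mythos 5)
Your overall route is the paper's: closure of $Q(\mathscr P)$ under multiplication, reduction to the case $f_0R+f_1R=R$, normalization of the trivializations so that they restrict to the identity along $t=0$, extraction of the discrepancy $\theta$ over $\Spec R_{f_0f_1}[t]$, factorization $\theta=\tau_0\tau_1^{-1}$ via Lemma~\ref{lem:modifyautomorphisms}, modification of the trivializations, and gluing by descent. Up to the last technical point the proposal is correct and matches the paper's proof step for step.

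The one place you deviate is your proposed repair for the fact that $\theta$ is an automorphism of the extended torsor $pr^*0^*\mathscr P$ rather than literally an element of $G(R_{f_0f_1}[t])$, and that repair does not work as sketched. First, $0^*\mathscr P$ is only fppf-locally trivial (later in the paper, Nisnevich-locally trivial); for a general linear $R$--group scheme $G$ it need not trivialize over any Zariski cover of $\Spec R$, so ``passing to a finer Zariski cover over which the relevant torsors trivialize'' is simply not available. Second, even granting such a cover, ``proving the extension statement there and then reassembling'' is circular as stated: gluing extendedness from a Zariski cover is precisely the patching that Proposition~\ref{prop:quillenpatching1var} and Theorem~\ref{thm:localtoglobalprinciple} are meant to deliver, and with more than two opens the two-term factorization of Lemma~\ref{lem:modifyautomorphisms} no longer applies directly. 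The standard fix --- the exact analogue of Quillen's passage from $GL_n$ to the unit group of $\operatorname{End}_R(P)$ --- is to apply Lemmas~\ref{lem:modifyingautomorphismsgtorsors} and~\ref{lem:modifyautomorphisms} not to $G$ but to the inner twist $G':=0^*\mathscr P\times^G G$ (conjugation action), a group scheme over all of $\Spec R$ whose sections over $R_{f_0f_1}[t]$ are exactly the automorphisms of $pr^*0^*\mathscr P$ there, so that $\theta\in G'(R_{f_0f_1}[t])$ with $\theta(0)=1$. This $G'$ is again a linear $R$--group scheme in the sense of Definition~\ref{defn:lineargroupscheme}: twisting the embedding $G\hookrightarrow GL_n$ gives a finitely presented closed immersion $G'\hookrightarrow GL(\mathscr E)$, where $\mathscr E$ is the rank $n$ vector bundle associated to $0^*\mathscr P$, and $GL(\mathscr E)$ is a finitely presented closed subgroup scheme of some $GL_N$ because $\mathscr E$ is a direct summand of a free module. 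With $G$ replaced by $G'$, your factorization, modification and descent steps go through verbatim, with no auxiliary cover needed. (The paper's own proof silently writes $\theta\in G(R_{f_0f_1}[t])$, so your instinct that something must be said here is sound; it is only the proposed remedy that fails.)
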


\begin{proof}
It is immediate that $Q(\mathscr{P})$ is closed under multiplication by elements in $R$.  Thus, we have to show that if $f_0,f_1 \in Q(\mathscr{P})$, then $f = f_0 + f_1$ lies in $Q(\mathscr{P})$ as well.  After replacing $R$ by $R_f$, we can assume that $f_0R + f_1R = R$.

Write $0: \Spec R \to \aone_R$, and $pr: \aone_R \to \Spec R$ for the zero section and the structure morphism.  Thus, suppose $\mathscr{P}$ is a $G$--torsor over $R[t]$ and assume that the restrictions $\mathscr{P}_{i} := \mathscr{P}|_{\Spec R_{f_i}[t]}$ are extended.  We want to show that $\mathscr{P} \cong pr^*0^*\mathscr{P}$.

By assumption, there are isomorphisms $u_i:\mathscr P_{i}\cong pr^*0^*\mathscr P_i$ over $R_{f_i}[t]$. By modifying $u_i$ if necessary, we may assume that $0^*u_i = 1$.  Let $\mathscr P_{01}$ be the restriction of $\mathscr P$ to $R_{f_0f_1}[t]$. Then $u_0$ and $u_1$ restrict to give two isomorphisms $(u_0)_{f_1}, (u_1)_{f_0} : \mathscr P_{01}\cong pr^*0^*\mathscr P_{01}$.  If we set $\theta = (u_1)_{f_0}(u_0)_{f_1}^{-1} \in G(R_{f_0f_1}[t])$, then there is a commutative diagram of the form
\[
\xymatrix{
\mathscr{P}_{0} \ar[d]^{u_0}&&\ar[ll]  \mathscr{P}_{01} \ar[rr]\ar[dl]^{(u_0)_{f_1}}\ar[dr]^{(u_1)_{f_0}} &&  \mathscr{P}_{1}\ar[d]^{u_1} \\
pr^*0^*\mathscr{P}_0 & \ar[l] pr^*0^*\mathscr{P}_{01} \ar[rr]^{\theta}& & pr^*0^*\mathscr{P}_{01} \ar[r] & pr^*0^*\mathscr{P}_1.
}
\]
If $\theta$ is the identity, then by fppf descent for $G$--torsors, the isomorphisms $u_0$ and $u_1$ glue to give an isomorphism $\mathscr P\cong pr^*0^*\mathscr P$, as desired.  If not, since $0^*u_i = 1$, we see that $\theta$ restricts along $t = 0$ to the identity.  Then, Lemma~\ref{lem:modifyautomorphisms} guarantees that we can find $\tau_i \in G(R_{f_i}[t])$ such that $\tau_i(0) = 1$ and such that $\theta = \tau_0 \tau_1^{-1}$.  Thus, $(\tau_0u_0)_{f_1} = (\tau_1 u_1)_{f_0}$ and replacing $u_0$ by $\tau_0u_0$ and $u_1$ by $\tau_1 u_1$, we can glue these isomorphisms to conclude that $\mathscr{P}$ is extended.
\end{proof}

\begin{thm}[Local-to-global principle]
\label{thm:localtoglobalprinciple}
Let $R$ be a commutative ring and suppose $G$ is a linear $R$--group scheme.  If $\mathscr{P}$ is a $G$--torsor over $R[t_1,\ldots,t_n]$, then
\begin{itemize}
\item[$(A_n)$] the set $Q({\mathscr P})$ consisting of $g \in R$ such that ${\mathscr P}|_{\Spec R_g[t_1,\ldots,t_n]}$ is extended from $R_g$ is an ideal in $R$.
\item[$(B_n)$] If ${\mathscr P}|_{\Spec R_{{\mathfrak m}}[t_1,\ldots,t_n]}$ is extended for every maximal ideal ${\mathfrak m} \subset R$, then ${\mathscr P}$ is extended.
\end{itemize}
\end{thm}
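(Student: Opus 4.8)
The plan is to prove $(A_n)$ and $(B_n)$ by induction on $n$, following the pattern of Lam's treatment of projective modules in \cite[Chapter~V.1]{Lam}. The base case $(A_1)$ is precisely Proposition~\ref{prop:quillenpatching1var}, and I will check that $(B_n)$ follows from $(A_n)$ uniformly in $n \geq 1$, so the crux is the inductive step $(A_{n-1}) \Rightarrow (A_n)$ for $n \geq 2$.

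To prove $(A_n)$: the set $Q(\mathscr P)$ contains $0$ and is closed under multiplication by elements of $R$ for trivial reasons (a localization of a torsor extended from $R_g$ is again extended, and the zero ring case is vacuous), so it suffices to show that $f_0, f_1 \in Q(\mathscr P)$ implies $f_0 + f_1 \in Q(\mathscr P)$. Replacing $R$ by $R_{f_0 + f_1}$, I may assume $f_0 R + f_1 R = R$ and must prove $\mathscr P$ itself is extended from $R$. Set $R' := R[t_1, \dots, t_{n-1}]$ and regard $\mathscr P$ as a torsor over $R'[t_n]$ under $G_{R'}$, which is again a linear $R'$--group scheme, being the base change of $G \hookrightarrow GL_{n,R}$ along $R \to R'$. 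For each $i$, the torsor $\mathscr P|_{R_{f_i}[t_1,\dots,t_n]}$ is extended from $R_{f_i}$, hence \emph{a fortiori} from $R'_{f_i} = R_{f_i}[t_1, \dots, t_{n-1}]$; since $f_0 R' + f_1 R' = R'$, Proposition~\ref{prop:quillenpatching1var} applied over the ring $R'$ yields that $\mathscr P$ is extended from $R'$, i.e.\ $\mathscr P \cong \mathrm{pr}^*\mathscr Q$ where $\mathscr Q$ is the restriction of $\mathscr P$ along $t_n = 0$, a $G$--torsor over $R[t_1, \dots, t_{n-1}]$. Restricting the trivializations of the $\mathscr P|_{R_{f_i}[t_1,\dots,t_n]}$ along $t_n = 0$ shows that $f_0, f_1 \in Q(\mathscr Q)$ for the $(n-1)$--variable quantity; by the inductive hypothesis $(A_{n-1})$ this is an ideal, so it contains the unit $f_0 + f_1$ and therefore equals $R$, whence $\mathscr Q$ is extended from $R$. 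Composing the two pullbacks gives that $\mathscr P$ is extended from $R$, which, after undoing the localization, is the desired statement $f_0 + f_1 \in Q(\mathscr P)$.

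For $(B_n)$, I would deduce from $(A_n)$ that $Q(\mathscr P)$ is an ideal, and then argue that it is contained in no maximal ideal, so that $Q(\mathscr P) = R$ and $\mathscr P$ is extended. Indeed, fixing a maximal ideal $\mathfrak m$, the hypothesis gives $\mathscr P|_{R_{\mathfrak m}[t_1,\dots,t_n]} \cong \mathrm{pr}^*\mathscr P_0$ for some torsor $\mathscr P_0$ over $R_{\mathfrak m}$; since $G$, and hence every $G$--torsor by Lemma~\ref{lem:finitepresentation}, is finitely presented (in fact affine) over its base, a standard spreading-out argument over $R_{\mathfrak m} = \colim_{g \notin \mathfrak m} R_g$ produces $g \in R \setminus \mathfrak m$ over which both $\mathscr P_0$ and the isomorphism are already defined, so that $g \in Q(\mathscr P) \setminus \mathfrak m$.

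The one genuinely technical ingredient is this last spreading-out step: one needs that a torsor extended over the local ring $R_{\mathfrak m}$ is already extended over some principal open $\Spec R_g$, which rests on finite presentation of the relevant torsors together with the usual colimit compatibilities for finitely presented schemes equipped with a group action. Apart from that, the argument is essentially bookkeeping, the only point requiring care being to keep track of exactly which base ring each torsor is being claimed to be extended from.
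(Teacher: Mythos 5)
Your proposal is correct and takes essentially the same approach as the paper: the Lam-style induction with the base case Proposition~\ref{prop:quillenpatching1var}, the one-variable patching statement applied over $R':=R[t_1,\ldots,t_{n-1}]$ in the inductive step, and finite presentation of torsors (Lemma~\ref{lem:finitepresentation}) to spread out the local trivializations for $(B_n)$. The only cosmetic difference is in the inductive step, where you localize at $f_0+f_1$ and apply $(A_{n-1})$ directly to $\mathscr P|_{t_n=0}$, while the paper instead invokes $(B_{n-1})$ for $(\mathscr P|_{t_n=0})_f$ and checks extendedness at each maximal ideal of $R_f$ using that $f_0$ or $f_1$ is invertible there---the two variants amount to the same bookkeeping.
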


\begin{proof}
We know that $(A_1)$ holds by Proposition~\ref{prop:quillenpatching1var}.\newline

\noindent {\em We show $(A_n) \Longrightarrow (B_n)$}.  It suffices to check that for $\mathscr{P}$ satisfying the conditions in $(B_n)$ that the ideal $Q({\mathscr P})$ is the unit ideal in $R$.  To this end, let $\mathscr{P}|_0$ be the pullback of $\mathscr{P}$ along the zero section $\Spec R \to \Spec R[t_1,\ldots,t_n]$ and let ${\mathscr P}'$ be the pullback of $\mathscr{P}|_0$ along the structure map $\Spec R[t_1,\ldots,t_n] \to \Spec R$.

For any maximal ideal ${\mathfrak m} \subset R$, since $\mathscr{P}|_{\Spec R_{\mathfrak m}[t_1,\ldots,t_n]}$ is by assumption extended, we know there is an isomorphism $\varphi: \mathscr{P}|_{\Spec R_{\mathfrak m}[t_1,\ldots,t_n]} \isomt \mathscr{P}'|_{\Spec R_{\mathfrak m}[t_1,\ldots,t_n]}$.  Since $G$--torsors over affine bases are of finite presentation under our hypotheses by Lemma~\ref{lem:finitepresentation}, there exists $g \in R \setminus {\mathfrak m}$ such that $\varphi$ is the localization of an isomorphism of torsors over $\Spec R_g[t_1,\ldots,t_n]$.  It follows that $g \in Q({\mathscr P})  \setminus {\mathfrak m}$ and therefore that $Q(\mathscr{P})$ is not contained in ${\mathfrak m}$, i.e., $Q({\mathscr P}) = R$.\newline

\noindent {\em We show $(A_1) \Longrightarrow (A_n)$}. We proceed by induction on $n$.  Assume therefore that $(A_{n-1})$ holds.  By the conclusion of the previous step, this means $(B_{n-1})$ holds as well. Form the set $Q(\mathscr{P})$ as in $(A_n)$.  It is straightforward to check that $R \cdot Q(\mathscr{P}) \subset Q({\mathscr P})$ and therefore it suffices to show that if $f_0,f_1 \in Q(\mathscr{P})$, then $f_0 + f_1 \in Q(\mathscr{P})$ as well.

Write $f = f_0 + f_1$.  Consider the quotient map $R[t_1,\ldots,t_n] \to R[t_1,\ldots,t_{n-1}]$ and set $\mathscr{P}|_{t_n=0}$ to be the restriction of $\mathscr{P}$ under the corresponding morphism of schemes.  Likewise, write ${\mathscr P}|_0$ for the restriction of $\mathscr{P}$ along the zero section as in the previous step.  Applying $(A_1)$ to the map $R[t_1,\ldots,t_{n-1}] \to R[t_1,\ldots,t_{n-1}][t_n]$, we conclude that $\mathscr{P}_f$ is extended from $(\mathscr{P}|_{t_n=0})_f$.

We claim that $(\mathscr{P}|_{t_n=0})_f$ is itself extended from $R_f$.  If that is the case, then $\mathscr{P}_f$ is extended and so $f \in Q(\mathscr{P})$.  Since $(B_{n-1})$ holds, it suffices to show that $(\mathscr{P}|_{t_n=0})_f$ is extended upon restriction to every maximal ideal ${\mathfrak m} \in R_f$.  Write ${\mathfrak m} = {\mathfrak p}_f$ where ${\mathfrak p}$ is the pre-image of ${\mathfrak m}$ under the localization map $R \to R_f$.  Since $f \notin {\mathfrak p}$ it follows that either $f_0$ or $f_1$ is not in ${\mathfrak p}$; without loss of generality, we can assume that $f_0 \notin {\mathfrak p}$.  By assumption, however, ${\mathscr P}_{f_0}$ is extended from $({\mathscr P}_0)_{f_0}$ so we conclude that the restriction of $(\mathscr{P}|_{t_n=0})_f$ to the maximal ideal ${\mathfrak m}$ is extended from $({\mathscr P}_0)_{\mathfrak p}$, which is what we wanted to show.
\end{proof}

\begin{cor}
\label{cor:localtoglobalisotropic}
Let $G$ be a reductive $R$--group scheme. If $R$ is regular Noetherian or $G$ is split, then the local-to-global principle holds for $G$--torsors, i.e., a $G$--torsor over $R[t_1,\ldots,t_n]$ is extended from $R$ if and only if for every maximal ideal ${\mathfrak m} \subset R$, the $G$--torsor on $R_{\mathfrak m}[t_1,\ldots,t_n]$ obtained by restriction is extended from $R_{\mathfrak m}$.
\end{cor}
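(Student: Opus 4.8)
The plan is to deduce this as an immediate specialization of Theorem~\ref{thm:localtoglobalprinciple}, once one knows that $G$ is linear. First I would note that the two hypotheses in the corollary---$R$ regular Noetherian, or $G$ split---are exactly the alternatives appearing in Proposition~\ref{prop:criteriaforlinearity}. Hence in either case $G$ is a \emph{linear} $R$--group scheme in the sense of Definition~\ref{defn:lineargroupscheme}, i.e., there is a finitely presented closed immersion group homomorphism $G \hookrightarrow GL_{n,R}$ for some $n \geq 1$. (In the split case one first reduces to $R = \Z$, which is regular Noetherian, and invokes Thomason \cite[Corollary 3.2 (3)]{ThomasonResolution}; this is already recorded in the proof of Proposition~\ref{prop:criteriaforlinearity}.)

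With linearity in hand, I would simply apply Theorem~\ref{thm:localtoglobalprinciple}. Part $(B_n)$ of that theorem states precisely that a $G$--torsor $\mathscr P$ over $R[t_1,\dots,t_n]$ is extended from $R$ whenever its restriction to $\Spec R_{\mathfrak m}[t_1,\dots,t_n]$ is extended from $R_{\mathfrak m}$ for every maximal ideal ${\mathfrak m}\subset R$. The reverse implication is formal: if $\mathscr P$ is pulled back from a $G$--torsor over $\Spec R$, then base change along $R\to R_{\mathfrak m}$ shows each localized torsor is extended from $R_{\mathfrak m}$. Combining the two directions gives the asserted ``if and only if''.

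There is essentially no obstacle here: the content of the corollary is entirely carried by Theorem~\ref{thm:localtoglobalprinciple} together with Thomason's linearity criterion. The only point worth checking in passing is that the notion of $G$--torsor is consistent throughout---for a reductive (hence smooth affine) $G$, the torsors under consideration are fppf-locally trivial and representable by $S$--schemes (by the discussion preceding Lemma~\ref{lem:finitepresentation}), and by Lemma~\ref{lem:finitepresentation} they are moreover smooth and so \'etale-locally trivial, which is consistent with the setup of Section~\ref{ss:localtoglobal}.
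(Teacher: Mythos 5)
Your proposal is correct and matches the paper's proof exactly: the paper also deduces the corollary by combining Proposition~\ref{prop:criteriaforlinearity} (Thomason's linearity criterion under the two alternative hypotheses) with Theorem~\ref{thm:localtoglobalprinciple}. Your additional remarks on the formal converse direction and the consistency of the torsor notion are fine but not needed beyond that.
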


\begin{proof}
Combine Proposition~\ref{prop:criteriaforlinearity} and Theorem~\ref{thm:localtoglobalprinciple}.
\end{proof}

\subsection{Affine homotopy invariance for $G$--torsors}
\label{ss:homotopyinvariance}
Let $G$ be a smooth linear $R$--group scheme.  In this section, we analyze when the pullback map
\[
H^1_{\Nis}(X,G)\longrightarrow H^1_{\Nis}(X\times\aone,G)
\]
is a bijection for $X$ a smooth affine $R$--scheme.

\subsubsection*{Special linear groups}
We begin by recalling some facts about oriented vector bundles over schemes.  If $X$ is a scheme, then recall that an oriented vector bundle on $X$ is a pair $(\mathscr{E},\varphi)$ consisting of a vector bundle $\mathscr{E}$ on $X$ equipped with an isomorphism $\varphi: \det \mathscr{E} \isomt \mathscr{O}_X$. There is a standard equivalence between the groupoid of oriented vector bundles on $X$ and that of $SL_n$--torsors over $X$.  Write $\mathscr{V}^o_{n}(X)$ for the set of isomorphism classes of rank $n$ oriented vector bundles on $X$.

\begin{thm}[Special linear homotopy invariance]
\label{thm:SLhomotopyinvariance}
Fix an integer $n \geq 1$ and suppose $R$ is a ring such that, for every maximal ideal ${\mathfrak m} \subset R$, $R_{{\mathfrak m}}$ is ind-smooth over a Dedekind ring with perfect residue fields (for example, $R_{{\mathfrak m}}$ is Noetherian and regular over such a Dedekind ring).  For every integer $m \geq 0$, the map
\[
\mathscr{V}^o_{n}(\Spec R) \longrightarrow \mathscr{V}^o_{n}(\Spec R[t_1,\ldots,t_m])
\]
is a bijection.
\end{thm}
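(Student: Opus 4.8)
The plan is to reduce to the local case via the local-to-global principle of Section~\ref{ss:localtoglobal} and then invoke known results about oriented vector bundles (equivalently $SL_n$--torsors) over the appropriate local rings. Since $SL_n$ is a split reductive $R$--group scheme, Corollary~\ref{cor:localtoglobalisotropic} applies unconditionally: an $SL_n$--torsor over $R[t_1,\ldots,t_m]$ is extended from $R$ if and only if, for every maximal ideal $\mathfrak m\subset R$, its restriction to $R_{\mathfrak m}[t_1,\ldots,t_m]$ is extended from $R_{\mathfrak m}$. Thus the surjectivity part of ``bijection'' — that every oriented bundle over $R[t_1,\ldots,t_m]$ is pulled back from $\Spec R$ — is reduced to the statement that the map $\mathscr V^o_n(\Spec R_{\mathfrak m})\to\mathscr V^o_n(\Spec R_{\mathfrak m}[t_1,\ldots,t_m])$ is surjective. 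Injectivity needs a separate but parallel argument: two oriented bundles over $R[t_1,\ldots,t_m]$ whose restrictions to $\Spec R$ agree are classified by a single torsor over $\Spec R$, and one wants to conclude they are isomorphic; I would handle this by the standard trick of comparing a given bundle $\mathscr P$ with $pr^*0^*\mathscr P$ as in the proof of Proposition~\ref{prop:quillenpatching1var}, so that injectivity too follows once one knows it locally — i.e.\ $\mathscr V^o_n(\Spec R_{\mathfrak m})\to\mathscr V^o_n(\Spec R_{\mathfrak m}[t_1,\ldots,t_m])$ is injective.

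So everything comes down to: for $A$ a local ring of the stated type (ind-smooth over a Dedekind ring with perfect residue fields), the map $\mathscr V^o_n(\Spec A)\to\mathscr V^o_n(\Spec A[t_1,\ldots,t_m])$ is a bijection. First I would dispose of the ind-smooth case: writing $A=\colim_i A_i$ as a filtered colimit of smooth algebras over a Dedekind ring $D$, both $\mathscr V^o_n(\Spec -)$ and $\mathscr V^o_n(\Spec (-)[t_1,\ldots,t_m])$ commute with filtered colimits of rings (finite presentation of $SL_n$ and of torsors under it, Lemma~\ref{lem:finitepresentation}), so it suffices to treat each $A_i$, i.e.\ to treat rings that are Noetherian regular, essentially smooth affine over $D$. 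Then, for such rings, the result is precisely the analogue of the Bass--Quillen conjecture for oriented vector bundles — which for $SL_n$--torsors over regular rings containing a field, or more generally over regular rings smooth over a Dedekind ring with perfect residue fields, is available in the literature (this is exactly the kind of statement referenced in the introduction as being ``well-known'' for $SL_n$; one cites the relevant homotopy-invariance results for $SL_n$--torsors, e.g.\ the work building on Lindel's theorem and its extensions, together with Popescu's theorem to pass from essentially smooth to general regular local rings in the equicharacteristic case, and the Dedekind-base refinement in mixed characteristic). The reduction to $m=1$ is then formal by induction, replacing $R$ by $R[t_1,\ldots,t_{m-1}]$ at each step, since that ring is again of the required type when $R$ is.

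The main obstacle is the input at the bottom of this reduction: affine homotopy invariance for $SL_n$--torsors over regular local rings that are smooth (or ind-smooth) over a Dedekind base with perfect residue fields. Over a field this is classical (Lindel, and the Bass--Quillen statement for oriented bundles follows from that for ordinary bundles together with the determinant), but the Dedekind-base version — needed so that the statement of the theorem holds over bases like $\Z$ — requires the more recent regularity/homotopy-invariance technology for torsors; I expect this to be where the real content sits, and the proof in the paper presumably either cites such a result directly or reduces it (via the fibration $\aone\setminus 0\to BSL_{n-1}\to BSL_n$ and induction on $n$, bootstrapping from the $GL_n$ case where Bass--Quillen over Dedekind bases is known by Popescu/Lindel-type arguments) to the vector bundle case. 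Everything else — the local-to-global reduction, the filtered-colimit argument, the induction on the number of variables — is formal given the machinery already set up in Sections~\ref{ss:torsorsrepresentable}–\ref{ss:localtoglobal}.
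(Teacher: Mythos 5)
Your overall architecture (Quillen patching via Corollary~\ref{cor:localtoglobalisotropic}, filtered colimits, reduction to local rings) is workable, but as written the proposal leaves the essential point unproved: you reduce everything to homotopy invariance of $\mathscr{V}^o_n$ over the local rings $R_{\mathfrak m}$ and then assert that this oriented (i.e.\ $SL_n$) version of Bass--Quillen over Dedekind bases ``is available in the literature,'' while acknowledging yourself that this is where the real content sits and only speculating about how it might be obtained. The results you invoke (Lindel, Popescu, and the Dedekind-base refinement) concern ordinary vector bundles, i.e.\ $GL_n$--torsors; that input is exactly \cite[Theorem 5.2.1]{PartI}, and the passage from plain to oriented bundles is not contained in those references --- it is precisely what this theorem adds. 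So the key step is deferred rather than proved, and the fallback you sketch (the fibration ${\mathbb A}^n\setminus 0\to BSL_{n-1}\to BSL_n$ plus induction on $n$) is not carried out either. Note also that injectivity requires no patching argument at all: restriction along the zero section is a retraction of the pullback map, so the map $\mathscr{V}^o_n(\Spec R)\to\mathscr{V}^o_n(\Spec R[t_1,\ldots,t_m])$ is split injective for formal reasons.

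The missing step is in fact small, and the paper closes it with no local-to-global argument whatsoever: by \cite[Theorem 5.2.1]{PartI} the underlying bundle of an oriented bundle on $R[t_1,\ldots,t_m]$ is extended from $R$ (so its determinant is already trivial over $R$), and what remains is to extend the orientation; two trivializations of the determinant differ by a unit of $R[t_1,\ldots,t_m]$, and since the hypotheses force each $R_{\mathfrak m}$, hence $R$, to be reduced, the map $R^\times\to R[t_1,\ldots,t_m]^\times$ is an isomorphism, so the orientation can be corrected over $R$. If you prefer to keep your local route, the local input you need can be supplied in the same spirit: over $R_{\mathfrak m}[t_1,\ldots,t_m]$ every bundle is free by \cite[Theorem 5.2.1]{PartI}, and any orientation on a free module is isomorphic to the standard one via the automorphism $\operatorname{diag}(u,1,\ldots,1)$ --- but this argument has to be written down; it is not a citation to existing Bass--Quillen literature.
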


\begin{proof}
By \cite[Theorem 5.2.1]{PartI}, every vector bundle on $\Spec R[t_1,\ldots,t_m]$ is pulled back from a vector bundle on $\Spec R$.  In particular, every oriented vector bundle on $\Spec R[t_1,\ldots,t_m]$ is pulled back from a vector bundle on $\Spec R$ with trivial determinant. It remains to show that every automorphism of the trivial line bundle on $\Spec R[t_1,\ldots,t_m]$ is extended from $\Spec R$.  In other words, we must show that the inclusion map $R \to R[t_1,\ldots,t_m]$ induces an isomorphism on unit groups.

Observe that our assumptions guarantee that $R_{{\mathfrak m}}$ is reduced for every maximal ideal ${\mathfrak m} \subset R$, and therefore $R$ must itself be reduced.  Since $R$ is reduced, the fact that $R \to R[t_1,\ldots,t_m]$ induces an isomorphism on unit groups follows from a straightforward induction argument, using the elementary observation that if $A$ is a reduced commutative ring, then the map $A \to A[t]$ induces an isomorphism $A^{\times} \to A[t]^{\times}$.
\end{proof}

\begin{rem}\label{rem:metalinear}
In \cite[Definition 4.3]{MField}, Morel defines an orientation on a vector bundle $\mathscr{E}$ to be an isomorphism between $\det \mathscr{E}$ and the square of a line bundle.  Oriented vector bundles in this sense correspond to torsors under the \emph{metalinear group} $ML_n$ defined by the pullback square
\[
\xymatrix{
ML_n \ar[r]\ar[d] & \mathbb G_m \ar[d]^{2} \\
GL_n \ar[r]_{\det} & \mathbb G_m.
}
\]
This more general notion of orientation is very natural in Morel's theory of the Euler class, since the latter only depends on an orientation in this sense.  
Theorem~\ref{thm:SLhomotopyinvariance} is also true for $ML_n$--torsors instead of $SL_n$--torsors, with a very similar proof.
\end{rem}

\subsubsection*{Symplectic groups}
We refer the reader to Knus \cite[\S I.4]{Knus} for more details about symplectic spaces over rings; we briefly fix notations in the scheme-theoretic context.  If $X$ is a scheme and $\mathcal{B}$ is a quasi-coherent sheaf on $X$, an {\em alternating bilinear form on $\mathcal{B}$} is a morphism of quasi-coherent sheaves $\varphi: \mathcal{B} \tensor_{\O_X} \mathcal{B} \to \O_X$ such that $\varphi\circ\Delta=0$, where  $\Delta: \mathcal{B} \to \mathcal{B} \tensor_{\O_X} \mathcal{B}$ is the (nonlinear) diagonal map.  If $(\mathcal{B},\varphi)$ is a quasi-coherent sheaf equipped with an alternating bilinear form, then we will say that $\varphi$ is {\em non-degenerate} if $\varphi$ induces an isomorphism $\mathcal{B} \to \mathcal{B}^{\vee} := \hom_{\O_X}(\mathcal{B},\O_X)$.  By a {\em symplectic bundle (of rank $2n$)} we will mean a pair $(\mathcal{B},\varphi)$ consisting of a (rank $2n$) vector bundle $\mathcal{B}$ on $X$ equipped with a non-degenerate alternating bilinear form $\varphi$.  Write $\mathscr{HV}_{2n}(X)$ for the set of isomorphism classes of rank $2n$ symplectic bundles on $X$.

We briefly recall the standard equivalence between the groupoid of symplectic vector bundles and that of $Sp_{2n}$--torsors on $X$.  In one direction, send a symplectic vector bundle $(\mathcal{B},\varphi)$ to its bundle of ``symplectic frames''; by \cite[Proposition I.4.1.4]{Knus} this construction yields an fppf torsor under $Sp_{2n}$.  In the other direction, given an $Sp_{2n}$--torsor $\mathscr{P}$ on $X$, consider the vector bundle associated with the standard $2n$--dimensional representation of $Sp_{2n}$, which comes equipped with a reduction of structure group to $Sp_{2n}$, i.e., an alternating form on the bundle.  By \cite[Corollary 4.1.2]{Knus} any symplectic bundle on a scheme $X$ is Zariski locally on $X$ isometric to the hyperbolic space of a trivial vector bundle \cite[I.3.5]{Knus}.  Combining these observations, we see that $Sp_{2n}$--torsors are Zariski locally trivial and that there is an equivalence between the groupoid of symplectic vector bundles over $X$ and that of Nisnevich locally trivial $Sp_{2n}$--torsors (as mentioned in Example~\ref{ex:bundlesandtorsors}).

\begin{thm}[Symplectic homotopy invariance]
\label{thm:sympletichomotopyinvariance}
Fix an integer $n \geq 1$ and suppose $R$ is a ring such that, for every maximal ideal ${\mathfrak m} \subset R$, $R_{{\mathfrak m}}$ is ind-smooth over a Dedekind ring with perfect residue fields (for example, $R_{{\mathfrak m}}$ is Noetherian and regular over such a Dedekind ring).  For every integer $m \geq 0$, the map
\[
\mathscr{HV}_{2n}(\Spec R) \longrightarrow \mathscr{HV}_{2n}(\Spec R[t_1,\ldots,t_m])
\]
is a bijection.
\end{thm}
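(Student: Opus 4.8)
The plan is to follow the pattern of the proof of Theorem~\ref{thm:SLhomotopyinvariance}: the underlying vector bundle is extended by \cite[Theorem 5.2.1]{PartI}, and one must then control the extra structure --- for $SL_n$ a trivialization of the determinant, which reduced to an elementary statement about units; for $Sp_{2n}$ the alternating form, whose control I would deduce from the \emph{local} case, where symplectic forms are hyperbolic, together with the local-to-global principle of Theorem~\ref{thm:localtoglobalprinciple}. Write $\pi\colon\Spec R[\underline{t}]\to\Spec R$ for the projection, where $R[\underline{t}]:=R[t_1,\ldots,t_m]$, and $\sigma\colon\Spec R\to\Spec R[\underline{t}]$ for the zero section, so that $\pi\circ\sigma=\mathrm{id}$; the map in the statement is $\pi^*$. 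Injectivity is formal, since $\sigma^*$ is a retraction of $\pi^*$. So it suffices to prove surjectivity: every rank $2n$ symplectic bundle $(\mathcal{B},\varphi)$ over $R[\underline{t}]$ should be isomorphic to $\pi^*(\mathcal{B}_0,\varphi_0)$, where $(\mathcal{B}_0,\varphi_0):=\sigma^*(\mathcal{B},\varphi)$.

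The linear-algebra heart of the argument I would isolate as a lemma: \emph{over a commutative ring $A$ such that every finitely generated projective $A$-module is free, every non-degenerate alternating form on a finitely generated free $A$-module is isomorphic to the standard hyperbolic form.} One proves this by induction on the rank $2n$. Given a basis vector $e$ of the underlying module $M$, the $A$-linear map $\varphi(e,-)\colon M\to A$ is surjective --- this may be checked after base change to the residue fields of $A$, where it holds because $\varphi$ is non-degenerate over a field and $e$ has nonzero image --- hence split; choosing $w\in M$ with $\varphi(e,w)=1$ exhibits $Ae\oplus Aw$ as a non-degenerate hyperbolic plane, so $M$ is the orthogonal direct sum of this plane and its orthogonal complement $M'$, and $M'$ is a finitely generated projective --- hence free --- $A$-module of rank $2n-2$ carrying a non-degenerate alternating form, to which the inductive hypothesis applies. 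The point is that this lemma applies to $A=R_{\mathfrak{m}}$ (finitely generated projective modules are free since $R_{\mathfrak{m}}$ is local) and to $A=R_{\mathfrak{m}}[\underline{t}]$ (finitely generated projective modules are free by \cite[Theorem 5.2.1]{PartI} applied to the local ring $R_{\mathfrak{m}}$), for every maximal ideal $\mathfrak{m}\subset R$.

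To prove surjectivity, use the equivalence between the groupoid of rank $2n$ symplectic bundles and that of Zariski-locally trivial $Sp_{2n}$-torsors recalled before the statement: $(\mathcal{B},\varphi)$ corresponds to an $Sp_{2n}$-torsor $\mathscr{P}$ over $R[\underline{t}]$, and the assertion $(\mathcal{B},\varphi)\cong\pi^*(\mathcal{B}_0,\varphi_0)$ is equivalent to $\mathscr{P}$ being \emph{extended from $R$} in the sense of \S\ref{ss:localtoglobal}. Since $Sp_{2n}$ is a linear $R$-group scheme --- a finitely presented closed subgroup scheme of $GL_{2n}$ --- the local-to-global principle (Theorem~\ref{thm:localtoglobalprinciple}) reduces this to showing that $(\mathcal{B},\varphi)|_{R_{\mathfrak{m}}[\underline{t}]}\cong\pi^*\big((\mathcal{B}_0,\varphi_0)|_{R_{\mathfrak{m}}}\big)$ for every maximal ideal $\mathfrak{m}\subset R$. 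Over $R_{\mathfrak{m}}[\underline{t}]$ the bundle $\mathcal{B}$ becomes free of rank $2n$ (as does $\mathcal{B}_0$ over $R_{\mathfrak{m}}$), by the freeness statements recorded above, so the lemma --- applied over $R_{\mathfrak{m}}[\underline{t}]$ and over $R_{\mathfrak{m}}$, the latter then pulled back along $\pi$ --- identifies both sides with the standard hyperbolic bundle, hence with one another, as required.

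The only genuinely non-formal ingredient, which I expect to be the main obstacle, is the lemma --- specifically the second use of \cite[Theorem 5.2.1]{PartI} (beyond the statement that the underlying bundle is extended), which is what guarantees that the orthogonal complement of a hyperbolic plane in a free module over $R_{\mathfrak{m}}[\underline{t}]$ is again free, so that the hyperbolic-splitting induction closes up. The remaining steps are either formal (injectivity; the reduction to local rings via Theorem~\ref{thm:localtoglobalprinciple}) or are classical symplectic algebra over a commutative ring, for which one may consult Knus \cite[\S I.4]{Knus}.
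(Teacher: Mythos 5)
Your proposal is correct and follows essentially the same route as the paper's proof: reduce to the local rings $R_{\mathfrak m}$ via the local-to-global principle (Theorem~\ref{thm:localtoglobalprinciple}, using that $Sp_{2n}$ is linear), then use \cite[Theorem 5.2.1]{PartI} to see that finitely generated projective modules over $R_{\mathfrak m}[t_1,\ldots,t_m]$ are free, so that every symplectic space there is hyperbolic and hence extended. The only difference is that you prove the hyperbolic-splitting lemma by hand, whereas the paper simply cites \cite[Corollary I.4.1.2]{Knus}; your argument for it is the standard one and is fine.
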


\begin{proof}
For any integer $n\geq 1$, the group $Sp_{2n}$ is a split reductive $R$--group scheme (and, by definition, linear). Therefore, applying Theorem~\ref{thm:localtoglobalprinciple}, it suffices to demonstrate the result with $R$ replaced by $R_{{\mathfrak m}}$.  Since $R_{{\mathfrak m}}$ is local, every finitely generated projective module over $R_{{\mathfrak m}}$ is free.  By the assumption on $R$ and \cite[Theorem 5.2.1]{PartI}, we know that, for any integer $m$, every finitely generated projective $R_{{\mathfrak m}}[t_1,\ldots,t_m]$--module is free.  Applying \cite[Corollary I.4.1.2]{Knus}, we conclude that every symplectic space over $R_{{\mathfrak m}}[t_1,\ldots,t_m]$ is isometric to the hyperbolic space of a free module.  In particular, every symplectic space over $R_{{\mathfrak m}}[t_1,\ldots,t_m]$ is extended from $R_{{\mathfrak m}}$.
\end{proof}

\subsubsection*{A formalism for homotopy invariance}
We recall a formalism introduced by Colliot-Th\'el\`ene--Ojanguren; the following result is a slight extension of \cite[Th\'eor\`eme 1.1]{CTO92}.

\begin{prop}
\label{prop:CTOProposition1.5}
Fix an infinite base field $k$.  Suppose $\mathbf{F}$ is a functor from the category of $k$--algebras to the category of pointed sets with the following properties:
\begin{itemize}
\item[$\mathbf{P1}$] The functor $\mathbf{F}$ commutes with filtered inductive limits of rings with flat transition morphisms.
\item[$\mathbf{P2}$] For every extension field $L/k$ and every integer $n \geq 0$, the restriction map
\[
\mathbf{F}(L[t_1,\ldots,t_n]) \longrightarrow \mathbf{F}(L(t_1,\ldots,t_n))
\]
has trivial kernel.
\item[$\mathbf{P3}$] The functor $\mathbf{F}$ has {\em weak affine Nisnevich excision}, i.e., for any smooth $k$--algebra $A$, any \'etale $A$--algebra $B$, and any element $f \in A$ such that $A/fA \cong B/fB$ the map
    \[
    \ker(\mathbf{F}(A) \to \mathbf{F}(A_f)) \longrightarrow \ker(\mathbf{F}(B) \longrightarrow \mathbf{F}(B_f))
    \]
    is a surjection.
\end{itemize}
If $B$ is the localization of a smooth $k$--algebra at a maximal ideal, then, setting $K_B = \operatorname{Frac}(B)$, for any integer $n \geq 0$ the restriction map
\[
\mathbf{F}(B[t_1,\ldots,t_n]) \longrightarrow \mathbf{F}(K_B(t_1,\ldots,t_n))
\]
has trivial kernel.
\end{prop}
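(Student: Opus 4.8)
The plan is to follow the method of Colliot-Th\'el\`ene and Ojanguren, which extracts a statement of exactly this shape from the three hypotheses $\mathbf{P1}$--$\mathbf{P3}$ by combining a geometric presentation lemma (``Quillen's trick'') with affine Nisnevich excision; throughout write $\underline t=(t_1,\ldots,t_n)$. First I would use $\mathbf{P1}$ to replace the local ring by a global geometric situation. Write $B=A_{\mathfrak m}$ with $A$ a smooth $k$-algebra and $\mathfrak m$ maximal; passing to the connected component of $\operatorname{Spec} A$ containing $\mathfrak m$, we may take $A$ to be a smooth affine $k$-domain, $X=\operatorname{Spec} A$, and $x\in X$ the closed point with $\mathcal O_{X,x}=B$. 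Given $\xi$ in the kernel, $\mathbf{P1}$ (filtered colimits along localizations) shows $\xi$ is the image of some $\xi_A\in\mathbf F(A[\underline t])$ after shrinking $X$ to a principal open neighbourhood of $x$; since $\operatorname{Frac}(B)=\operatorname{Frac}(A)$, the image of $\xi_A$ in $\mathbf F(\operatorname{Frac}(A)[\underline t])$ lies in $\ker(\mathbf F(\operatorname{Frac}(A)[\underline t])\to\mathbf F(\operatorname{Frac}(A)(\underline t)))$, hence is trivial by $\mathbf{P2}$; spreading this out again with $\mathbf{P1}$ shows $\xi_A$ is trivial over $A_f[\underline t]$ for some nonzero $f\in A$. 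If $f\notin\mathfrak m$ we are done, because $B[\underline t]$ is then a localization of $A_f[\underline t]$; so assume $f\in\mathfrak m$ and put $Z=V(f)\subset X$, a hypersurface through $x$ off which $\xi_A$ is trivial.

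Next I would invoke Quillen's geometric presentation lemma (in the form of Colliot-Th\'el\`ene--Ojanguren, refined by Moser, Gabber and Panin): after a further Zariski shrinking of $X$ around $x$ there is an \'etale morphism $\psi\colon X\to\mathbb A^{d-1}_k\times\mathbb A^1_k$, $d=\dim X$, carrying $Z$ isomorphically onto a closed subscheme $\bar Z$ that is finite over $\mathbb A^{d-1}_k$ and satisfies $Z=\psi^{-1}(\bar Z)$; choosing a defining equation $\bar f$ of $\bar Z$ with $f=\psi^\ast\bar f$ yields an isomorphism $\mathcal O(X)/f\cong\mathcal O(\mathbb A^d)/\bar f$. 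Now $\mathcal O(\mathbb A^d)[\underline t]$ and $\mathcal O(X)[\underline t]$ are smooth $k$-algebras, \'etale one over the other, and agree modulo $\bar f$, so $\mathbf{P3}$ produces $\zeta\in\mathbf F(\mathcal O(\mathbb A^d)[\underline t])$, trivial over $\mathcal O(\mathbb A^d)_{\bar f}[\underline t]$, with $\psi^\ast\zeta=\xi_A$. Since $B[\underline t]$ is a localization of $\mathcal O_{\mathbb A^d,\psi(x)}[\underline t]\otimes_{\mathcal O(\mathbb A^d)[\underline t]}\mathcal O(X)[\underline t]$, it suffices to prove $\zeta$ is trivial over $\mathcal O_{\mathbb A^d,\psi(x)}[\underline t]$: we have reduced to the ``model'' case in which the bad hypersurface is finite over an affine space along the $\mathbb A^1$-factor.

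In the model case I would move $\bar Z$ away from the point using the $\mathbb A^1$-coordinate $s$. Localizing $\mathbb A^{d-1}_k$ at the image $\bar y$ of $\psi(x)$ gives a local domain $R'=\mathcal O_{\mathbb A^{d-1},\bar y}$ with infinite residue field; over $R'$ the scheme $\bar Z$ becomes finite, so $\bar Z_{R'}=V(P)$ for a monic $P\in R'[s]$ (the leading coefficient of a defining equation is a unit, by finiteness over the local ring $R'$), and — since the residue field is infinite — after translating $s$ we may assume the section $s=0$ misses $\bar Z_{R'}$, i.e. $P$ has unit constant term. Restricting $\zeta$ along $\mathcal O(\mathbb A^d)\to R'[s]$ and using $\mathbf{P1}$, we must show: for $\zeta'\in\mathbf F(R'[s,\underline t])$ trivial over $R'[s,\underline t]_P$, $\zeta'$ is trivial over the local ring of $R'[s]$ at the prime of $\psi(x)$. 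Here the section $s=0$ factors through $R'[s,\underline t]_P$, so $\zeta'|_{s=0}$ is trivial over $R'[\underline t]$; combining this with triviality over $R'[s,\underline t]_P$ and with the monicity of $P$ (which makes $\zeta'$ trivial near $s=\infty$ as well), a patching argument over $\mathbb P^1_{R'}$, fed by $\mathbf{P1}$ and $\mathbf{P2}$ over the residue field of $R'$, should give triviality of $\zeta'$ over all of $R'[s,\underline t]$ — a fortiori over the required localization.

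I expect the main obstacle to be exactly this last step, together with pinning down the form of Quillen's presentation lemma valid over an arbitrary (possibly imperfect) infinite field: one must manufacture $\psi$ with $\bar Z$ genuinely finite over $\mathbb A^{d-1}_k$ and $Z=\psi^{-1}(\bar Z)$ so that the excision square demanded by $\mathbf{P3}$ is literally available, and then carry out the $\mathbb P^1_{R'}$-patching for a bare pointed-set-valued functor equipped only with $\mathbf{P1}$--$\mathbf{P3}$, with no homotopy-invariance input beyond $\mathbf{P2}$. By contrast, the reductions in the first two paragraphs are formal consequences of $\mathbf{P1}$, $\mathbf{P2}$ and elementary commutative algebra, so essentially all the content lies in the geometric presentation and the model case.
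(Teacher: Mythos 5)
Your first two reductions are essentially sound and run parallel to the opening of the paper's argument: the $\mathbf{P2}$/$\mathbf{P1}$ spreading-out step is exactly how the proof begins, and passing to a ``model'' presentation so that $\mathbf{P3}$ can be applied is the same mechanism the paper uses. The difference is that the paper does this in one stroke with Lindel's \'etale-neighborhood theorem (quoted from Knus, Corollary VIII.3.2.5): the pair $(B,g)$ is an \'etale neighborhood of $(A,f)$ with $A$ a localization of a polynomial ring at a maximal ideal, $\varphi(f)=ug$ and $A/fA\cong B/gB$, so $\mathbf{P3}$ lifts the class to $\ker(\mathbf F(A[t_1,\ldots,t_n])\to\mathbf F(A_f[t_1,\ldots,t_n]))$. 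At that point the paper is finished: the statement for localizations of polynomial rings is precisely Proposition 1.5 of Colliot-Th\'el\`ene--Ojanguren, which is cited as a black box (the whole Proposition is advertised only as a slight extension of their Th\'eor\`eme 1.1, and the reduction via Lindel plus $\mathbf{P3}$ is the only new content).

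The genuine gap is exactly the step you flag yourself: the model case. Having reduced to a class $\zeta'\in\mathbf F(R'[s,t_1,\ldots,t_n])$ that is trivial after inverting a monic $P$, you appeal to ``a patching argument over $\mathbb P^1_{R'}$ fed by $\mathbf{P1}$ and $\mathbf{P2}$''. No such patching is available from the axioms: $\mathbf F$ is a bare pointed-set-valued functor on $k$--algebras, with no Zariski or Mayer--Vietoris gluing, no Horrocks-type input, and no homotopy invariance except over fields; $\mathbf{P3}$ only gives surjectivity of kernels along \'etale maps inducing an isomorphism on the hypersurface. Triviality at $s=0$ together with triviality over $R'[s,t_1,\ldots,t_n]_P$ does not formally combine into triviality over $R'[s,t_1,\ldots,t_n]$ --- that implication is essentially a Horrocks theorem, whose proofs for torsors use gluing of sheaves over $\mathbb P^1$, not properties $\mathbf{P1}$--$\mathbf{P3}$. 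This missing step is the actual content of Colliot-Th\'el\`ene--Ojanguren's Proposition 1.5, whose proof is a dimension induction using their presentation lemma together with the excision axiom, i.e., precisely the part your sketch leaves at ``should give''. (A secondary caveat, which you also note: the presentation lemma in the strong global form you invoke, with $\bar Z$ finite over $\mathbb A^{d-1}_k$ and $Z=\psi^{-1}(\bar Z)$ on the nose, is normally only available after localizing the base; the paper sidesteps this entirely by using Lindel, where the compatibility with the hypersurface is part of the cited statement.) So either supply a proof of the model case from the axioms, or do as the paper does: reduce via Lindel and quote Colliot-Th\'el\`ene--Ojanguren.
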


\begin{proof}
Set $d := \dim B$ and write ${\mathfrak m}$ for the maximal ideal of $B$.  Suppose that
$$
\xi_0 \in \ker(\mathbf{F}(B[t_1,\ldots,t_n]) \longrightarrow \mathbf{F}(K_B(t_1,\ldots,t_n))).
$$
Let $\xi$ be the image of $\xi_0$ in $\mathbf{F}(K_B[t_1,\ldots,t_n])$.  Then, by assumption, $\xi$ lies in the kernel of $\mathbf{F}(K_B[t_1,\ldots,t_n]) \to \mathbf{F}(K_B(t_1,\ldots,t_n))$. By $\mathbf{P2}$, we conclude that $\xi$ is trivial.

By using $\mathbf{P1}$, we conclude that there is an element $g \in {\mathfrak m} \setminus 0$ such that $\xi_0$ restricts trivially to $\mathbf{F}(B_g[t_1,\ldots,t_n])$.  Then, by Knus \cite[Corollary VIII.3.2.5]{Knus}, there exist a polynomial ring $L[x_1,\ldots,x_d]$, a maximal ideal ${\mathfrak n} \subset L[x_1,\ldots,x_d]$, a local essentially \'etale morphism $\varphi: A \to B$ (where $A = L[x_1,\ldots,x_d]_{{\mathfrak n}}$), and an element $f \in \mathfrak{m}$ such that $\varphi(f) = ug$ for $u$ a unit in $B_{{\mathfrak m}}$ and $\varphi$ induces an isomorphism $A/fA \isomt B/gB$.  By $\mathbf{P3}$, we conclude that there exists an element $\xi'_0 \in \ker(\mathbf{F}(A[t_1,\ldots,t_n]) \to \mathbf{F}(A_{f}[t_1,\ldots,t_n]))$ mapping to $\xi_0$.  However, $\xi'_0$ is also evidently in $\ker(\mathbf{F}(A[t_1,\ldots,t_n]) \to \mathbf{F}(K_A(t_1,\ldots,t_n)))$.  Thus, it suffices to establish the result in the case where $B$ is the localization of a polynomial ring at a maximal ideal, which is precisely \cite[Proposition 1.5]{CTO92}.
\end{proof}

\subsubsection*{Isotropic reductive groups}
If $k$ is a field, a reductive $k$--group scheme will be called {\em anisotropic} if it contains no $k$--subgroup isomorphic to $\gm{}$.  We take the following definition for isotropic reductive $k$--group, but we caution the reader that our definition differs from that of Borel \cite[Definition V.20.1]{Borel}; we choose this definition because it better suits our eventual applications.

\begin{defn}
\label{defn:isotropic}
If $k$ is a field, a reductive $k$--group scheme $G$ will be called {\em isotropic} if each of the almost $k$--simple components of the derived group of $G$ contains a $k$--subgroup scheme isomorphic to $\gm{}$.
\end{defn}

\begin{rem}
See Borel \cite[\S V.20]{Borel} or Gille \cite[\S 9.1]{GilleLuminy} for further discussion of isotropic reductive groups.  In general, the existence of a {\em non-central} split multiplicative $k$--subgroup is equivalent to the existence of a parabolic $k$--subgroup by the dynamic construction described just before Lemma~\ref{lem:parabolics}.  In particular, isotropic reductive $k$--groups admit proper parabolic subgroups.
\end{rem}

\begin{thm}
\label{thm:hoinvisotropic}
If $k$ is an infinite field, and $G$ is an isotropic reductive $k$--group (see \textup{Definition~\ref{defn:isotropic}}), then for any smooth $k$--algebra $A$ and any integer $n \geq 0$, the map
\[
H^1_{\Nis}(\Spec A,G) \longrightarrow H^1_{\Nis}(\Spec A[t_1,\ldots,t_n],G)
\]
is a bijection.
\end{thm}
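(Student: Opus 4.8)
The plan is to verify that $H^1_{\Nis}(-,G)$ satisfies the hypotheses of the Colliot-Thélène--Ojanguren formalism (Proposition~\ref{prop:CTOProposition1.5}) and then to combine the resulting injectivity statement with the local-to-global principle (Theorem~\ref{thm:localtoglobalprinciple}/Corollary~\ref{cor:localtoglobalisotropic}) and a patching argument to conclude surjectivity. So one takes $\mathbf F(A) = H^1_{\Nis}(\Spec A, G)$, a functor from $k$--algebras to pointed sets (pointed by the trivial torsor). Property $\mathbf{P1}$ (commutation with filtered colimits along flat maps) is standard for torsors under a finitely presented group scheme. Property $\mathbf{P3}$ (weak affine Nisnevich excision) should follow from the affine Nisnevich excision properties already built into the framework of \cite{PartI} and recalled here, applied to the Nisnevich-local classifying space $B_{\Nis}G$; the point is that a Nisnevich-local simplicial presheaf satisfies the relevant Mayer--Vietoris/excision patching, and on $\pi_0$ this yields exactly the surjectivity of the map on kernels appearing in $\mathbf{P3}$.

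The serious input is property $\mathbf{P2}$: for every extension field $L/k$ and every $n \geq 0$, the map $H^1_{\Nis}(\A^n_L, G) \to H^1_{\Nis}(\operatorname{Spec} L(t_1,\dots,t_n), G)$ has trivial kernel. Equivalently, a Nisnevich-locally trivial $G$--torsor on $\A^n_L$ that is generically trivial is already trivial. Here I would use that $G$ is isotropic: isotropic reductive groups admit proper parabolic subgroups, and I expect to invoke the results on generically trivial torsors under isotropic reductive groups over infinite fields (the ``$\A^1$--invariance'' circle of ideas, building on work along the lines of Colliot-Thélène--Ojanguren, Raghunathan--Ramanathan, and more recently on the analogue of Bass--Quillen for isotropic groups). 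Concretely, over the polynomial ring $L[t_1,\dots,t_n]$ one reduces to showing that a generically trivial $G$--torsor is extended from $L$, hence trivial since $L$ is a field; this is where isotropy is genuinely needed, as the remark after the theorem statement (and \cite{BalweSawantReductive}) shows it fails otherwise. \emph{This step — establishing $\mathbf{P2}$ for isotropic $G$ — is the main obstacle}, and I expect the paper to either cite or prove a structural result reducing to the split case via the dynamic description of parabolics from Section~\ref{ss:grouptorsorshomogeneousspaces} and then to $SL_n$-type or vector-bundle statements via Lemma~\ref{lem:parabolics}.

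Granting Proposition~\ref{prop:CTOProposition1.5}, I conclude that for $B$ the localization of a smooth $k$--algebra at a maximal ideal, $H^1_{\Nis}(B[t_1,\dots,t_n],G) \to H^1_{\Nis}(\operatorname{Frac}(B)(t_1,\dots,t_n),G)$ has trivial kernel. I then bootstrap this to homotopy invariance over an arbitrary smooth $k$--algebra $A$ as follows. For injectivity of $H^1_{\Nis}(\operatorname{Spec} A, G) \to H^1_{\Nis}(\A^n_A, G)$: if two torsors on $\operatorname{Spec} A$ become isomorphic after pullback to $\A^n_A$, then after localizing at each maximal ideal $\mathfrak m$ of $A$ they agree generically on $\A^n_{A_{\mathfrak m}}$, hence by the trivial-kernel statement (applied to the ``difference'' torsor on $\A^n_{A_{\mathfrak m}}$, using that $G$--torsors over a local ring form a pointed set with a reasonable notion of triviality, or more carefully by the standard twisting trick) they already agree on $\A^n_{A_{\mathfrak m}}$, hence by Quillen patching (Corollary~\ref{cor:localtoglobalisotropic}) they agree on $\A^n_A$; pulling back along the zero section shows they agreed on $\operatorname{Spec} A$. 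For surjectivity: given a torsor $\mathscr P$ on $\A^n_A$, for each maximal ideal $\mathfrak m$ its restriction to $\A^n_{A_{\mathfrak m}}$ is generically trivial over $\operatorname{Frac}(A_{\mathfrak m})(t_1,\dots,t_n)$ — wait, that is not automatic, so instead one argues that $\mathscr P|_{\A^n_{A_{\mathfrak m}}}$ is extended from $A_{\mathfrak m}$: by the trivial-kernel result the torsor $\mathscr P|_{\A^n_{A_{\mathfrak m}}}$ twisted by the inverse of $(\mathscr P|_{\{0\}})$ pulled back from $A_{\mathfrak m}$ is generically trivial hence trivial, so $\mathscr P|_{\A^n_{A_{\mathfrak m}}}$ is extended; then Theorem~\ref{thm:localtoglobalprinciple}$(B_n)$ shows $\mathscr P$ itself is extended from $A$, giving surjectivity. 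The bijectivity then follows, and one should double-check the pointed-set bookkeeping for the twisting argument since $H^1_{\Nis}(-,G)$ is only a pointed set, not a group, when $G$ is nonabelian — this is the routine but slightly delicate part of the write-up.
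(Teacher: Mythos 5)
Your overall skeleton matches the paper's proof: verify axioms $\mathbf{P1}$--$\mathbf{P3}$ of Proposition~\ref{prop:CTOProposition1.5} for $\mathbf F=H^1_{\Nis}(-,G)$ (with $\mathbf{P2}$ imported from the literature --- the paper cites \cite[Proposition 2.4 and Theorem 2.5]{CTO92} directly, much as you anticipated, with $\mathbf{P1}$ from Lemma~\ref{lem:finitepresentation} and $\mathbf{P3}$ from affine Nisnevich excision for $B\mathbf{Tors}_{\Nis}(G)$), then feed the trivial-kernel conclusion into Quillen patching (Corollary~\ref{cor:localtoglobalisotropic}). The genuine gap is in your final step. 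You assert that generic triviality of $\mathscr P|_{\A^n_{A_{\mathfrak m}}}$ over $\operatorname{Frac}(A_{\mathfrak m})(t_1,\dots,t_n)$ ``is not automatic'' and route around this by twisting $\mathscr P$ by the pullback of $\mathscr P|_{\{0\}}$. That detour does not work as written: over the (non-Henselian) local ring $A_{\mathfrak m}$ the Nisnevich-locally trivial torsor $\mathscr P|_{\{0\}}$ need not be trivial, so twisting replaces $G$ by an inner form over $A_{\mathfrak m}$ that is in general neither defined over $k$ nor isotropic, and the CTO formalism (in particular $\mathbf{P2}$) is not available for it; for nonabelian $G$, the trivial-kernel statement for $G$ itself says nothing about two torsors that merely become isomorphic generically. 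So this is more than ``pointed-set bookkeeping.'' The same remark applies to your injectivity argument, which in any case is unnecessary: restriction along the zero section is a retraction of the pullback map, so injectivity is immediate.

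The repair is precisely the observation you dismissed: generic triviality \emph{is} automatic. Any Nisnevich covering sieve of the spectrum of a field contains an \'etale map with a point having trivial residue field extension, hence admits a section, so a Nisnevich-locally trivial torsor over a field is trivial. Since $A_{\mathfrak m}$ is a regular local domain, the restriction of $\mathscr P_{\mathfrak m}$ to the generic point $\Spec \operatorname{Frac}(A_{\mathfrak m})(t_1,\dots,t_n)$ is therefore trivial; the conclusion of Proposition~\ref{prop:CTOProposition1.5} then shows that $\mathscr P_{\mathfrak m}$ is itself trivial (not just extended), and Theorem~\ref{thm:localtoglobalprinciple} $(B_n)$ yields that $\mathscr P$ is extended from $A$, giving the bijection. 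This one-line observation is exactly how the paper concludes; with it, your twisting apparatus can be deleted entirely.
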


\begin{proof}
We have to show that every Nisnevich locally trivial $G$--torsor $\mathscr P$ over $A[t_1,\ldots,t_n]$ is extended from $A$.
After Corollary~\ref{cor:localtoglobalisotropic}, it suffices to show that, for every maximal ideal ${\mathfrak m}$ of $A$, the $G$--torsor $\mathscr P_{\mathfrak m}$ over $A_{\mathfrak m}[t_1,\ldots,t_n]$ is extended from $A_{\mathfrak m}$; we will show that in fact $\mathscr P_{\mathfrak m}$ is trivial.

We claim that the functor $A \mapsto H^1_{\Nis}(\Spec A,G)$ from $k$--algebras to pointed sets satisfies the axioms $\mathbf {P1}-\mathbf {P3}$ of Proposition~\ref{prop:CTOProposition1.5}.  Axiom $\mathbf{P1}$ is a consequence of our finite presentation hypotheses by way of Lemma~\ref{lem:finitepresentation}.  Axiom $\mathbf{P2}$ uses the hypothesis that $G$ is isotropic and follows from \cite[Proposition 2.4 and Theorem 2.5]{CTO92} (note that our definition of isotropic reductive $k$--group coincides with that used in \cite[\S 2 p.~103]{CTO92}).  Axiom $\mathbf{P3}$ is a formal consequence of the fact that $H^1_{\Nis}(-,G)\cong \pi_0(B\mathbf{Tors}_{\Nis}(G))$ where $B\mathbf{Tors}_{\Nis}(G)$ satisfies affine Nisnevich excision (see Section~\ref{ss:torsorsrepresentable}).  By the conclusion of Proposition~\ref{prop:CTOProposition1.5}, it suffices to show $\mathscr P_{\mathfrak m}$ becomes trivial over $\operatorname{Frac}(A_{\mathfrak m})(t_1,\ldots,t_n)$, but this follows immediately from the fact that a field has no nontrivial Nisnevich covering sieves.
\end{proof}

\begin{rem}
\label{rem:hoinvfailure}
At least if $k$ is an infinite perfect field, Theorem~\ref{thm:hoinvisotropic} admits a converse: if $G$ is a reductive $k$--group such that $H^1_{\Nis}(-,G)$ is $\aone$--invariant on $\Sm_k^\aff$, then $G$ is isotropic, see Balwe and Sawant \cite[Theorem 1]{BalweSawantReductive}. In fact, for $G$ reductive, the following three conditions are equivalent:
\begin{enumerate}
\item[(i)] $G$ is isotropic (in the sense of Definition~\ref{defn:isotropic});
\item[(ii)] $H^1_{\Nis}(-,G)$ is $\aone$--invariant on smooth affine $k$--schemes;
\item[(iii)] $R_{\Nis}\Singaone G$ is $\aone$--invariant.
\end{enumerate}
The implication (i) $\Rightarrow$ (ii) is Theorem~\ref{thm:hoinvisotropic}, (ii) $\Rightarrow$ (iii) is a special case of Theorem~\ref{thm:homogeneousrep}, and (iii) $\Rightarrow$ (i) is \cite[Theorem 4.7]{BalweSawantReductive}.
\end{rem}

\section{Applications}
\label{sec:applications}
In this section, we collect a number of applications of the results established so far.  Section~\ref{ss:reptorsors} discusses representability results for Nisnevich locally trivial torsors.  As mentioned in Remark \ref{remintro:specialgroups}, that representability results should hold for torsors under $SL_n$ and $Sp_{2n}$ was observed by Schlichting \cite[Remark 6.23]{Schlichting}; we simply observe in these cases that the expected classical geometric objects yield models for the representing spaces.  In Section~\ref{ss:rephomogeneousspaces} we establish that for various classes of homogeneous spaces for reductive groups applying the singular construction produces an $\aone$--local space.  Section~\ref{ss:strongaoneinvariance} establishes strong $\aone$--invariance of homotopy sheaves of the singular construction of a reductive group under suitable additional hypotheses.  Finally, Section~\ref{ss:nilpotence} studies a purely algebraic problem using our techniques, namely nilpotence of non-stable $K_1$--functors.

\subsection{Affine representability results for torsors}
\label{ss:reptorsors}
Let $\mathrm{Gr}_{n,n+N}$ be the usual Grassmannian parameterizing  $n$--dimensional subspaces of an $(n+N)$--dimensional vector space.  Let $\widetilde{\mathrm{Gr}}_{n,n+N}$ be the complement of the zero section in the total space of the determinant of the tautological vector bundle on $\mathrm{Gr}_{n,n+N}$.  The space $\widetilde{\mathrm{Gr}}_{n,n+N}$ parameterizes rank $n$ subspaces of the $(n+N)$--dimensional vector space equipped with a specified trivialization of their determinant.  We set $\widetilde{\mathrm{Gr}}_n := \colim_N \widetilde{\mathrm{Gr}}_{n,n+N}$ where the transition maps are the same as those in the definition $\mathrm{Gr}_n$.  With these definitions, we can establish a geometric representability result for oriented vector bundles.

\begin{thm}
\label{thm:orientedrepresentability}
Suppose $k$ is ind-smooth over a Dedekind ring with perfect residue fields. Then, for any $X\in\Sm_k^\aff$, and any integer $n \geq 1$, there is a bijection
\[
\mathscr{V}_{n}^o(X) \cong [X,\widetilde{\mathrm{Gr}}_n]_{\aone}
\]
that is functorial in $X$.
\end{thm}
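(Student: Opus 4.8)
The strategy is to apply Theorem~\ref{thm:Gtorsors} to the group scheme $SL_n$ over $S=\Spec k$ and then to identify the resulting classifying space with $\widetilde{\mathrm{Gr}}_n$ in $\ho{k}$. First I would verify the hypothesis of Theorem~\ref{thm:Gtorsors}, namely that $H^1_{\Nis}(-,SL_n)$ is $\aone$--invariant on $\Sm_k^\aff$. By Example~\ref{ex:bundlesandtorsors} there is a natural bijection $H^1_{\Nis}(\Spec A,SL_n)\cong\mathscr{V}^o_n(\Spec A)$ for every smooth $k$--algebra $A$. Since $k$ is ind-smooth over a Dedekind ring $D$ with perfect residue fields, every $A\in\Sm_k^\aff$ is ind-smooth over $D$, and hence so is every localization $A_{\mathfrak m}$; thus Theorem~\ref{thm:SLhomotopyinvariance} (taking $m=1$) shows that $\mathscr{V}^o_n(\Spec A)\to\mathscr{V}^o_n(\Spec A[t])$ is a bijection for every such $A$. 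Feeding this into Theorem~\ref{thm:Gtorsors}\,(ii) yields, for every $X\in\Sm_k^\aff$, a bijection
\[
\mathscr{V}^o_n(X)\cong H^1_{\Nis}(X,SL_n)\cong [X,BSL_n]_{\aone}
\]
that is functorial in $X$.

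It then remains to produce an $\aone$--weak equivalence $\widetilde{\mathrm{Gr}}_n\simeq BSL_n$, which I would obtain by realizing $\widetilde{\mathrm{Gr}}_n$ as a geometric classifying space of $SL_n$ in the sense of Morel--Voevodsky \cite{MV}. Let $V_{n,n+N}\subset\A^{n(n+N)}$ be the Stiefel scheme of injective linear maps $\A^n\to\A^{n+N}$, equipped with its free right $GL_n$--action; the quotient $V_{n,n+N}\to\mathrm{Gr}_{n,n+N}$ is the tautological $GL_n$--torsor. Forming instead the quotient by $SL_n\subset GL_n$, the top exterior power of a frame defines a nowhere-vanishing section of the determinant of the tautological bundle, and one checks that this induces an isomorphism $V_{n,n+N}/SL_n\isomto\widetilde{\mathrm{Gr}}_{n,n+N}$ and that $V_{n,n+N}\to\widetilde{\mathrm{Gr}}_{n,n+N}$ is a Zariski-locally trivial $SL_n$--torsor (recall $SL_n$ is a special group). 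Passing to the colimit over $N$ along the maps induced by $\A^{n+N}\subset\A^{n+N+1}$, the ind-scheme $\colim_N V_{n,n+N}$ carries a free $SL_n$--action, is $\aone$--contractible (it is the standard model for $ESL_n$; see \cite[\S4.2]{MV}), and has quotient $\colim_N\widetilde{\mathrm{Gr}}_{n,n+N}=\widetilde{\mathrm{Gr}}_n$. Hence $\widetilde{\mathrm{Gr}}_n$ is a model for the geometric classifying space $B_{gm}SL_n$, which is $\aone$--weakly equivalent to $BSL_n$ (indeed to $B_{\Nis}SL_n$, since $SL_n$ is special). Combining this with the displayed bijection above gives the functorial bijection $\mathscr{V}^o_n(X)\cong[X,\widetilde{\mathrm{Gr}}_n]_{\aone}$.

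The one step requiring genuine care is the geometric identification in the previous paragraph: that the $V_{n,n+N}$ form an admissible gadget for $SL_n$, so that $\colim_N V_{n,n+N}$ is $\aone$--contractible, and that the $SL_n$--quotient of this ind-scheme is precisely $\widetilde{\mathrm{Gr}}_n$. Everything else is a formal assembly of Theorem~\ref{thm:Gtorsors} and Theorem~\ref{thm:SLhomotopyinvariance}. This fits the pattern advertised in the introduction: the substantive content is affine Nisnevich excision together with affine homotopy invariance for $SL_n$--torsors, after which one is left only to recognize the expected classical geometric object---here the ``oriented Grassmannian'' $\widetilde{\mathrm{Gr}}_n$---as a model for the representing space.
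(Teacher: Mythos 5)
Your proposal is correct and follows essentially the same route as the paper: the paper likewise combines Theorem~\ref{thm:Gtorsors} with Theorem~\ref{thm:SLhomotopyinvariance} (via the identification $\mathscr{V}^o_n \cong H^1_{\Nis}(-,SL_n)$ of Example~\ref{ex:bundlesandtorsors}) to get $\mathscr{V}^o_n(X)\cong[X,BSL_n]_{\aone}$, and then identifies $\widetilde{\mathrm{Gr}}_n$ with the Morel--Voevodsky geometric classifying space $B_{gm}(SL_n,i)$ attached to the standard inclusion $SL_n\hookrightarrow GL_n$, using Zariski-local triviality of $SL_n$--torsors to conclude that $\widetilde{\mathrm{Gr}}_n\to BSL_n$ is an $\aone$--weak equivalence. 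Your more explicit unwinding of the admissible-gadget/Stiefel-variety model is just the content of the citation to \cite[\S 4.2]{MV} in the paper's proof.
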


\begin{proof}
Recall from Example~\ref{ex:bundlesandtorsors} and the discussion preceding Theorem~\ref{thm:SLhomotopyinvariance} that, for any integer $n \geq 1$, there is a functorial bijection of the form $\mathscr {V}_{n}^o(X)\cong H^1_{\Nis}(X, SL_{n})$.  Combining Theorems~\ref{thm:Gtorsors} and \ref{thm:SLhomotopyinvariance}, we conclude that, under the stated hypotheses on $k$, for any smooth affine $k$--scheme $X$, $H^1_{\Nis}(X, SL_{n}) \cong [X,BSL_{n}]_{\aone}$.

Using the notation of Morel and Voevodsky \cite[\S 4.2]{MV}, the space $B_{gm}(SL_n,i)$ (attached to the defining inclusion $i: SL_n \hookrightarrow GL_n$) is precisely the space $\widetilde{\mathrm{Gr}}_n$.  Therefore combining the results of \cite[\S 4.2]{MV}, and using the fact that all $SL_n$--torsors are Zariski (and thus Nisnevich) locally trivial we conclude that the map $\widetilde{\mathrm{Gr}}_n \to BSL_n$ classifying the universal $SL_n$--torsor over $\widetilde{\mathrm{Gr}}_n$ is an $\aone$--weak equivalence.
\end{proof}

If we let $\mathrm{H}$ be the standard $2$--dimensional hyperbolic space, then we can consider the symplectic vector space $\mathrm{H}^{\oplus N}$.  Panin and Walter construct a scheme $\mathrm{HGr}_{n,n+N}$ that parameterizes rank $2n$ symplectic subspaces of $\mathrm{H}^{\oplus (n+N)}$ and we set $\mathrm{HGr}_n := \colim_N \mathrm{HGr}_{n,n+N}$ \cite{PaninWalterPontryaginClasses}.  Alternatively, $\mathrm{HGr}$ can be described as the colimit $\colim_N Sp_{2(n+N)}/(Sp_{2n} \times Sp_{2N})$.  Using these definitions, we are now able to establish a geometric representability theorem for symplectic vector bundles.

\begin{thm}
\label{thm:symplecticrepresentability}
Suppose $k$ is ind-smooth over a Dedekind ring with perfect residue fields. Then, for any $X\in\Sm_k^\aff$, there is a bijection
\[
\mathscr{HV}_{2n}(X) \cong [X,\mathrm{HGr}_n]_{\aone}
\]
that is functorial in $X$.
\end{thm}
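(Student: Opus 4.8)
The plan is to mimic the proof of Theorem~\ref{thm:orientedrepresentability}, substituting $Sp_{2n}$ for $SL_n$ and $\mathrm{HGr}_n$ for $\widetilde{\mathrm{Gr}}_n$. First, recall from Example~\ref{ex:bundlesandtorsors} together with the discussion preceding Theorem~\ref{thm:sympletichomotopyinvariance} that symplectic vector bundles of rank $2n$ are the same as $Sp_{2n}$--torsors, and that every such torsor is Zariski (hence Nisnevich) locally trivial; this yields a functorial bijection $\mathscr{HV}_{2n}(X) \cong H^1_{\Nis}(X, Sp_{2n})$ for every scheme $X$.

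Second, I would invoke Theorem~\ref{thm:Gtorsors}. The group $Sp_{2n}$ is a smooth finitely presented $k$--group scheme, and Theorem~\ref{thm:sympletichomotopyinvariance} shows that, under the stated hypotheses on $k$, the functor $H^1_{\Nis}(-, Sp_{2n})$ is $\aone$--invariant on $\Sm_k^{\aff}$. Hence Theorem~\ref{thm:Gtorsors}(ii) provides a functorial bijection $H^1_{\Nis}(X, Sp_{2n}) \cong [X, BSp_{2n}]_{\aone}$ for every $X \in \Sm_k^{\aff}$.

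Third, I would identify $\mathrm{HGr}_n$ with $BSp_{2n}$ in $\ho{k}$. Writing $\mathrm{HGr}_n = \colim_N Sp_{2(n+N)}/(Sp_{2n} \times Sp_{2N})$, the geometric bar construction of Morel--Voevodsky \cite[\S 4.2]{MV}---equivalently the analysis of Panin--Walter \cite{PaninWalterPontryaginClasses}---shows that the universal $Sp_{2n}$--torsor over $\mathrm{HGr}_n$ is classified by a morphism $\mathrm{HGr}_n \to BSp_{2n}$ which is an $\aone$--weak equivalence; here one uses that $Sp_{2n}$--torsors are Zariski locally trivial (so the relevant quotients agree as Nisnevich sheaves, cf. Lemma~\ref{lem:zariskiquotient}) and that the total spaces $Sp_{2(n+N)}$ become $\aone$--contractible in the colimit. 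Composing with the bijection of the second step gives the desired functorial bijection $\mathscr{HV}_{2n}(X) \cong [X, \mathrm{HGr}_n]_{\aone}$.

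The main obstacle is the third step: verifying that the ind-scheme $\mathrm{HGr}_n$ genuinely models $BSp_{2n}$ in the $\aone$--homotopy category. Once one grants the Morel--Voevodsky and Panin--Walter identification of $\mathrm{HGr}_n$ as a geometric classifying space for $Sp_{2n}$, the remaining verifications are entirely parallel to the oriented case and formal.
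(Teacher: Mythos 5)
Your proposal follows essentially the same route as the paper's proof: the identification $\mathscr{HV}_{2n}(X)\cong H^1_{\Nis}(X,Sp_{2n})$ via Example~\ref{ex:bundlesandtorsors}, the bijection $H^1_{\Nis}(X,Sp_{2n})\cong[X,BSp_{2n}]_{\aone}$ from Theorems~\ref{thm:Gtorsors} and~\ref{thm:sympletichomotopyinvariance}, and the equivalence $\mathrm{HGr}_n\simeq BSp_{2n}$, which the paper likewise obtains by citing the proof of \cite[Theorem 8.2]{PaninWalterBO} rather than reproving it. The only small imprecision is in your sketch of that last step: the $\aone$--contractible total space is the colimit of the quotients $Sp_{2(n+N)}/Sp_{2N}$ (the symplectic ``Stiefel'' varieties), not of the groups $Sp_{2(n+N)}$ themselves, but this is internal to the Panin--Walter argument you (and the paper) invoke.
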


\begin{proof}
Proceeding as in the proof of Theorem~\ref{thm:orientedrepresentability}, we combine Example~\ref{ex:bundlesandtorsors} and the discussion preceding Theorem~\ref{thm:sympletichomotopyinvariance} to conclude that there is a functorial bijection of the form $\mathscr {HV}_{2n}(X)\cong H^1_{\Nis}(X, Sp_{2n})$.  Combining Theorems~\ref{thm:Gtorsors} and \ref{thm:sympletichomotopyinvariance}, we conclude that, under the stated hypotheses on $k$, for any smooth affine $k$--scheme $X$, $H^1_{\Nis}(X, Sp_{2n}) \cong [X,BSp_{2n}]_{\aone}$.  Finally, by the proof of \cite[Theorem 8.2]{PaninWalterBO}, we can conclude that $\mathrm{HGr}_n$ is $\aone$--weakly equivalent to  $BSp_{2n}$, and thus for any smooth $k$--scheme $X$, $[X,\mathrm{HGr}_n]_{\aone} \cong [X,BSp_{2n}]_{\aone}$.
\end{proof}

We now establish Theorem~\ref{thmintro:isotropictorsors}.

\begin{thm}
\label{thm:isotropictorsors}
Suppose $k$ is an infinite field, and $G$ is an isotropic reductive $k$--group (see \textup{Definition~\ref{defn:isotropic}}).  For any smooth affine $k$--scheme $X$, there is a functorial bijection
\[
H^1_{\Nis}(X,G) \cong [X,BG]_{\aone}.
\]
\end{thm}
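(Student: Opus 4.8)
The plan is to deduce the statement directly from the two principal results established above: the abstract affine representability theorem for Nisnevich locally trivial torsors, Theorem~\ref{thm:Gtorsors}, and the affine homotopy invariance statement for isotropic reductive groups, Theorem~\ref{thm:hoinvisotropic}. Take $S = \Spec k$. Since $k$ is a field and $G$ is reductive, $G$ is a finitely presented smooth affine $k$--group scheme, so the standing hypothesis of Theorem~\ref{thm:Gtorsors} is satisfied; the only thing that needs checking is its running assumption that $H^1_{\Nis}(-,G)$ is $\aone$--invariant on $\Sm_k^\aff$.

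First I would invoke Theorem~\ref{thm:hoinvisotropic}: because $k$ is infinite and $G$ is isotropic in the sense of Definition~\ref{defn:isotropic}, for every smooth $k$--algebra $A$ and every integer $n \geq 0$ the pullback map $H^1_{\Nis}(\Spec A,G) \to H^1_{\Nis}(\Spec A[t_1,\ldots,t_n],G)$ is a bijection. Specializing to $n = 1$ is exactly the assertion that $H^1_{\Nis}(-,G)$ is $\aone$--invariant on the category of smooth affine $k$--schemes, since every object of $\Sm_k^\aff$ is of the form $\Spec A$ with $A$ a smooth $k$--algebra and $X \times \aone = \Spec A[t]$.

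With this invariance available, Theorem~\ref{thm:Gtorsors}(ii) applies verbatim and yields, for every $X \in \Sm_k^\aff$, that the canonical map $H^1_{\Nis}(X,G) \to [X,BG]_{\aone}$ is a bijection, functorial in $X$; this is the claim. One could also record, using Theorem~\ref{thm:Gtorsors}(i), that $R_{\Zar}\Singaone B_{\Nis}G$ is a Nisnevich-local and $\aone$--invariant model, so that the bijection is in fact given by naive $\aone$--homotopy classes of maps, though this refinement is not needed for the statement as phrased.

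In this sense the proof carries no obstacle of its own; all of the work is concentrated in the two cited inputs. Within those, the substantive content is the affine homotopy invariance of Theorem~\ref{thm:hoinvisotropic}, whose proof reduces via the local-to-global principle of Theorem~\ref{thm:localtoglobalprinciple} to the local rings $A_{\mathfrak m}$ and then runs through the Colliot-Th\'el\`ene--Ojanguren formalism of Proposition~\ref{prop:CTOProposition1.5}, with the isotropy hypothesis on $G$ entering precisely to ensure that generically trivial torsors over the relevant rational function fields are trivial.
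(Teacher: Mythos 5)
Your proposal is correct and matches the paper's own argument exactly: Theorem~\ref{thm:hoinvisotropic} supplies the $\aone$--invariance of $H^1_{\Nis}(-,G)$ on $\Sm_k^\aff$, and Theorem~\ref{thm:Gtorsors}(ii) then gives the bijection. The paper's proof is literally ``Combine Theorems~\ref{thm:Gtorsors} and \ref{thm:hoinvisotropic},'' so your elaboration of where the real work lies is accurate but adds nothing structurally new.
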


\begin{proof}
Combine Theorems~\ref{thm:Gtorsors} and \ref{thm:hoinvisotropic}.
\end{proof}

\begin{rem}
In Theorem~\ref{thm:isotropictorsors}, the isotropy condition on $G$ cannot be weakened, cf. Remark~\ref{rem:hoinvfailure}.
\end{rem}

\subsection{Affine representability results for some homogeneous spaces}
\label{ss:rephomogeneousspaces}
Let $Q_{2n-1}$ be the smooth affine quadric over $\Z$ defined by $\sum_i x_i y_i = 1$.  There is a standard identification $SL_n/SL_{n-1} \isomt Q_{2n-1}$.  Let $Q_{2n}$ be the smooth affine quadric over $\Z$ defined by $\sum_i x_iy_i = z(z+1)$ (in Asok--Doran--Fasel \cite{AsokDoranFasel}, it is shown that $Q_{2n}$ is $\aone$--weakly equivalent to ${\pone}^{\sma n}$ over $\Spec \Z$).  In particular, there are isomorphisms $Q_2 \cong SL_2/\gm{}$ and $Q_4 \cong Sp_4/(Sp_2 \times Sp_2)$ over $\Spec \Z$.  If $R$ is a ring in which $2$ is invertible, then $Q_{2n}$ is isomorphic over $R$ to the quadric defined by the standard hyperbolic form $\sum_i x_iy_i + z^2 = 1$.  It then follows from Lemma~\ref{lem:orthogonalquotient} that $Q_{2n}$ is isomorphic over $R$ to the homogeneous space $SO_{2n+1}/SO_{2n}$.

\begin{thm}
\label{thm:oddquadric}
If $R$ is a ring that is ind-smooth over a Dedekind ring with perfect residue fields, then $Q_{2n-1}$ is $\A^1$--naive.  In particular, for any smooth affine $R$--scheme $X$, there is a functorial bijection
\[
\pi_0(\Singaone Q_{2n-1})(X) \isomto [X,Q_{2n-1}]_{\aone}.
\]
\end{thm}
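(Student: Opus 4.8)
The plan is to use the standard identification $SL_n/SL_{n-1}\isomt Q_{2n-1}$ recalled above and to apply Theorem~\ref{thm:homogeneousrep} with $G = SL_n$ and $H = SL_{n-1}$, both regarded as split reductive (hence smooth, affine and finitely presented) $R$--group schemes pulled back from $\Z$, with $SL_{n-1}\hookrightarrow SL_n$ the standard block closed immersion. The quotient $SL_n/SL_{n-1}$ exists as a smooth affine $R$--scheme, namely $Q_{2n-1}$, so there are just two hypotheses of Theorem~\ref{thm:homogeneousrep} left to check: (a) the projection $SL_n \to SL_n/SL_{n-1}$ is Nisnevich locally split; and (b) $H^1_{\Nis}(-,SL_n)$ and $H^1_{\Nis}(-,SL_{n-1})$ are $\aone$--invariant on $\Sm_R^\aff$.

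For (a), I would run the argument used in the proof of Lemma~\ref{lem:orthogonalquotient}(ii): a morphism $X\to SL_n/SL_{n-1}$ classifies an $SL_{n-1}$--torsor over $X$ that becomes trivial after stabilizing to an $SL_n$--torsor, equivalently a rank $n-1$ vector bundle $E$ on $X$ together with an orientation-compatible trivialization of $E\oplus\mathscr O_X$. Over a local ring every finitely generated projective module is free, and trivializing $\det(E\oplus\mathscr O_X)=\det E$ then forces $E$ itself to be free; hence $SL_n\to SL_n/SL_{n-1}$ admits a section Zariski-locally (the torsor being finitely presented, triviality over the local rings spreads out), and in particular it is Nisnevich locally split.

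For (b), I would combine Theorem~\ref{thm:SLhomotopyinvariance} with the equivalence between $SL_m$--torsors and rank $m$ oriented vector bundles recalled in Example~\ref{ex:bundlesandtorsors} and before Theorem~\ref{thm:SLhomotopyinvariance}. Under this equivalence, $\aone$--invariance of $H^1_{\Nis}(-,SL_m)$ on $\Sm_R^\aff$ is precisely the statement that $\mathscr V_m^o(\Spec A)\to\mathscr V_m^o(\Spec A[t])$ is a bijection for every smooth $R$--algebra $A$. Since $R$ is ind-smooth over a Dedekind ring $D$ with perfect residue fields, every smooth $R$--algebra $A$ is ind-smooth over $D$, and hence so is each localization $A_{\mathfrak n}$ at a maximal ideal; thus Theorem~\ref{thm:SLhomotopyinvariance}, applied with $R$ replaced by $A$, supplies the required bijections for $m=n$ and $m=n-1$.

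With (a) and (b) established, Theorem~\ref{thm:homogeneousrep} shows that $Q_{2n-1}\cong SL_n/SL_{n-1}$ is $\aone$--naive, whence the functorial bijection $\pi_0(\Singaone Q_{2n-1})(X)\isomto[X,Q_{2n-1}]_{\aone}$ for $X\in\Sm_R^\aff$. I expect the local-splitting claim (a) to be the only mildly delicate point — everything else is assembling results already in hand — and even (a) is just a routine transcription of the orthogonal-group case; a secondary thing to keep track of is that the $\aone$--invariance input in (b) must hold over arbitrary smooth $R$--algebras, which is why the ind-smoothness hypothesis has to be propagated as indicated.
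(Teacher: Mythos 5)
Your proof is correct, and it follows the same overall strategy as the paper — present $Q_{2n-1}$ as a homogeneous space and feed it into Theorem~\ref{thm:homogeneousrep} — but via a different presentation. The paper uses $Q_{2n-1}\cong GL_n/GL_{n-1}$: since $GL_{n-1}$ is special, the torsor $GL_n\to Q_{2n-1}$ is Zariski locally trivial with no further argument, and the $\aone$--invariance input for both groups is just the Bass--Quillen-type statement \cite[Theorem 5.2.1]{PartI} (vector bundles), so the proof is three lines. Your $SL_n/SL_{n-1}$ presentation instead requires Theorem~\ref{thm:SLhomotopyinvariance} for the invariance of $H^1_{\Nis}(-,SL_m)$ (your propagation of the ind-smoothness hypothesis from $R$ to smooth $R$--algebras and their localizations is exactly what the paper also does implicitly, e.g.\ in Theorem~\ref{thm:orientedrepresentability}), plus a local-splitness argument: your transcription of the proof of Lemma~\ref{lem:orthogonalquotient}(ii) — maps to $SL_n/SL_{n-1}$ classify stably trivial oriented rank $n-1$ bundles, which are trivial over local rings since projectives there are free and any orientation of a free module is realized by rescaling a basis vector, and triviality then spreads out by finite presentation — is sound (your phrase that the orientation ``forces $E$ to be free'' is slightly loose, since freeness is automatic and the point is oriented triviality, but the argument is right). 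Both routes work; the $GL$--route is the more economical one, while yours has the mild advantage of exhibiting $Q_{2n-1}$ directly as a quotient by a semisimple group, parallel to the even-dimensional case treated via $SO_{2n+1}/SO_{2n}$.
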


\begin{proof}
The scheme $Q_{2n-1}$ is isomorphic over $\Spec \Z$ to the homogeneous space $GL_n/GL_{n-1}$.  Since all torsors for $GL_{n-1}$ are Zariski locally trivial, it follows that $GL_n \to Q_{2n-1}$ is Zariski locally trivial (in fact, one can just write down an explicit trivialization).   Using \cite[Theorem 5.2.1]{PartI} we may apply Theorem~\ref{thm:homogeneousrep} to conclude.
\end{proof}

\begin{thm}
\label{thm:evenquadric}
If either (a) $n \leq 2$, and $R$ is a ring that is ind-smooth over a Dedekind ring with perfect residue fields, or (b) $n \geq 3$ and  $R$ is an infinite field having characteristic unequal to $2$, then $Q_{2n}$ is $\A^1$--naive.  In particular, under either set of hypotheses,
for any smooth affine $R$--scheme $X$, there is a functorial bijection
\[
\pi_0(\Singaone Q_{2n})(X) \isomto [X,Q_{2n}]_{\aone}.
\]
\end{thm}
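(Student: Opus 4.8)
The plan is to realize $Q_{2n}$ as a homogeneous space $G/H$ for a suitable pair of finitely presented smooth group schemes $H\subset G$ over the relevant base, and then to quote Theorem~\ref{thm:homogeneousrep}. In each of the two cases this amounts to checking three things: that $Q_{2n}$ is isomorphic to $G/H$ over the base ring in question, that the projection $G\to G/H$ is Nisnevich locally split, and that $H^1_{\Nis}(-,G)$ and $H^1_{\Nis}(-,H)$ are $\aone$--invariant on $\Sm_R^\aff$. Once these are in place, Theorem~\ref{thm:homogeneousrep} yields directly that $Q_{2n}\cong G/H$ is $\aone$--naive, together with the asserted bijection.

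For case (a) I would use the presentations recorded just before the statement: $Q_2\cong SL_2/\gm{}$, taking $G=SL_2$ and $H=\gm{}$, when $n=1$; and $Q_4\cong Sp_4/(Sp_2\times Sp_2)$, taking $G=Sp_4$ and $H=Sp_2\times Sp_2$, when $n=2$. Both isomorphisms hold over $\Spec\Z$ and hence over $R$, and in either case $H$ is a smooth closed subgroup of $G$, so by Lemma~\ref{lem:zariskiquotient} the map $G\to G/H$ is an $H$--torsor; since $\gm{}$ and $Sp_2\times Sp_2$ are special groups, this torsor is Zariski locally split, in particular Nisnevich locally split. For the homotopy invariance hypotheses, one first observes that a scheme $\Spec A$ that is smooth and affine over $R$ again has the property that every localization of $A$ at a maximal ideal is ind-smooth over a Dedekind ring with perfect residue fields, so that Theorem~\ref{thm:SLhomotopyinvariance} and Theorem~\ref{thm:sympletichomotopyinvariance} apply with $R$ replaced by $A$. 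Combining these with the identifications $H^1_{\Nis}(-,SL_n)\cong\mathscr V^o_n$ and $H^1_{\Nis}(-,Sp_{2n})\cong\mathscr{HV}_{2n}$ of Example~\ref{ex:bundlesandtorsors}, with the vector bundle case of \cite[Theorem 5.2.1]{PartI} (needed for $H=\gm{}=GL_1$), and with the identity $H^1_{\Nis}(-,G_1\times G_2)\cong H^1_{\Nis}(-,G_1)\times H^1_{\Nis}(-,G_2)$, one obtains the required $\aone$--invariance on $\Sm_R^\aff$.

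For case (b) the base $R=k$ is an infinite field of characteristic $\neq 2$, so $2$ is a unit and, as recorded before the statement, $Q_{2n}$ is isomorphic over $k$ to $SO_{2n+1}/SO_{2n}$; here I would take $G=SO_{2n+1}$ and $H=SO_{2n}$. Lemma~\ref{lem:orthogonalquotient}(ii) says exactly that $SO_{2n+1}\to SO_{2n+1}/SO_{2n}$ is a Zariski locally trivial $SO_{2n}$--torsor, hence Nisnevich locally split. For the homotopy invariance hypotheses I would invoke Theorem~\ref{thm:hoinvisotropic}: since $n\ge 3$, the split groups $SO_{2n}$ and $SO_{2n+1}$ are semisimple of positive rank, and each almost $k$--simple component of their derived groups is again a split semisimple group and so contains a copy of $\gm{}$; thus $SO_{2n}$ and $SO_{2n+1}$ are isotropic reductive $k$--groups in the sense of Definition~\ref{defn:isotropic}, and since $k$ is infinite, $H^1_{\Nis}(-,SO_{2n+1})$ and $H^1_{\Nis}(-,SO_{2n})$ are $\aone$--invariant on $\Sm_k^\aff$.

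The argument carries no new conceptual content: it is assembled entirely from Theorem~\ref{thm:homogeneousrep} and the homotopy invariance results already established. The one point requiring genuine care is the group theory in case (b), namely confirming that the relevant split orthogonal groups are isotropic so that Theorem~\ref{thm:hoinvisotropic} is available; and the reason the statement is split into the two cases is precisely that for $n\le 2$ one has $\aone$--invariance of $H^1_{\Nis}$ for $SL_2$ and $Sp_4$ over the much larger class of rings that are ind-smooth over a Dedekind ring with perfect residue fields (Theorems~\ref{thm:SLhomotopyinvariance} and~\ref{thm:sympletichomotopyinvariance}), while for the orthogonal groups needed when $n\ge 3$ one has $\aone$--invariance only via Theorem~\ref{thm:hoinvisotropic}, which forces the hypothesis that the base be an infinite field.
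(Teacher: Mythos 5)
Your proposal is correct and follows essentially the same route as the paper: case (a) uses the identifications $Q_2\cong SL_2/\gm{}$ and $Q_4\cong Sp_4/(Sp_2\times Sp_2)$ together with the homotopy invariance results for $SL_2$ (equivalently $Sp_2$), $Sp_4$ and $\gm{}$, and case (b) uses $Q_{2n}\cong SO_{2n+1}/SO_{2n}$, Lemma~\ref{lem:orthogonalquotient} for Zariski local triviality, and Theorem~\ref{thm:hoinvisotropic} for the split (hence isotropic) groups $SO_m$, all fed into Theorem~\ref{thm:homogeneousrep}. Your explicit verifications of Nisnevich local splitting via specialness and of the $\aone$--invariance hypotheses over smooth affine $R$--algebras are exactly the points the paper treats implicitly.
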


\begin{proof}
For $n = 1$ consider the identification $Q_2 \cong SL_2/\gm{}$. Affine homotopy invariance holds for $\gm{}$--torsors over an arbitrary regular base, and for torsors under $SL_2\cong Sp_2$  by assumption.  The result follows immediately from Theorem~\ref{thm:homogeneousrep}.  Similarly, for $n = 2$ consider the identification $Q_4 \cong Sp_4/(Sp_2 \times Sp_2)$.  Again, by assumption we may combine Theorems~\ref{thm:sympletichomotopyinvariance} and \ref{thm:homogeneousrep} to conclude.

For $n \geq 3$ we proceed slightly differently.  The $SO_{2n}$--torsor $SO_{2n+1} \to Q_{2n}$ is still Zariski locally trivial by Lemma~\ref{lem:orthogonalquotient}.  Since $SO_m$ is split for $m \geq 3$, we may apply Theorem~\ref{thm:hoinvisotropic} to conclude that $H^1_{\Nis}(-,SO_{m})$ is $\aone$--invariant on $\Sm_R^\aff$ for any integer $m \geq 3$.  Then, we apply Theorem~\ref{thm:homogeneousrep} to conclude.
\end{proof}

\begin{rem}
If $X = \Spec A$, then a map $f:X \to Q_{2n}$ yields an ideal $I \subset A$ and a surjection $\omega: (A/I)^{\oplus n} \to I/I^2$; the ideal $I$ is the ideal generated by the images of $x_1,\ldots,x_n,z$ in the coordinate presentation of the quadric.  The class of $f$ in $\pi_0(\Singaone Q_{2n})(X)$ depends only on the pair $(I,\omega)$ and is called the ``Segre class'' of $(I,\omega)$, see Fasel \cite[Theorem 2.0.2]{FaselComplete}. When $X$ is smooth over an infinite field, the Segre class provides an obstruction to lifting $\omega$ to a surjection $A^{\oplus n} \to I$ \cite[Theorem 3.2.8]{FaselComplete}.
\end{rem}

\subsubsection*{Zariski fiber bundles with affine space fibers}
If $F$ is a fixed $S$--scheme, we will say that an $S$--morphism $\pi: E \to B$ is a {\em Zariski fiber bundle of $S$--schemes with fibers isomorphic to $F$} if there exist an $S$--scheme $U$, a Zariski covering morphism $U \to B$ and an isomorphism $\varphi: U \times_B E \isomt U \times_S F$ over $U$.  The following result, which generalizes a result of Morel \cite[Theorem 8.9(2)]{MField}, applies to affine vector bundle torsors (a.k.a. Jouanolou--Thomason devices, see Weibel \cite[Definition 4.2 and Proposition 4.4]{WeibelHAK}).

\begin{lem}
\label{lem:affineinvarianceofsing}
Suppose $B \in \Sm_S$, and $\pi: E \to B$ is a Zariski fiber bundle of $S$--schemes with fibers isomorphic to ${\mathbb A}^n_S$.  For any $X=\Spec R\in \Sm_S^{\aff}$, the induced map
\[
\Singaone E(X) \longrightarrow \Singaone B(X)
\]
is an acyclic Kan fibration.  Moreover, $E$ is $\A^1$--naive if and only if $B$ is $\A^1$--naive.
\end{lem}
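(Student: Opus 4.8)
The plan is to deduce the ``moreover'' from the first assertion together with the fact that $\pi$ is an $\A^1$--weak equivalence, so essentially all of the work goes into showing that $\Singaone E(X)\to\Singaone B(X)$ is an acyclic Kan fibration for $X$ affine. Recall that a map of simplicial sets is an acyclic Kan fibration if and only if it has the right lifting property against the boundary inclusions $\partial\Delta^m\hookrightarrow\Delta^m$ for all $m\geq 0$, so I would unwind exactly this lifting property.

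\textbf{Reduction to a section--extension statement.} Because $E$ and $B$ are representable, the lifting problem translates into scheme theory. An $m$--simplex of $\Singaone B(X)$ is a morphism $Y:=X\times\Delta^m_{\aone}\to B$, and (since a representable presheaf carries a colimit of schemes to the corresponding limit) a map $\partial\Delta^m\to\Singaone E(X)$ is the same as a compatible family of maps into the faces, i.e.\ a morphism $Z\to E$ where $Z\subset Y$ is the closed subscheme given by the union of the faces of $X\times\Delta^m_{\aone}$. Thus a lifting problem against $\partial\Delta^m\hookrightarrow\Delta^m$ amounts to: a morphism $b\colon Y\to B$ from the affine scheme $Y=X\times\Delta^m_{\aone}$ together with a lift of $b|_Z$ through $\pi$; pulling $E$ back along $b$ turns this into a Zariski fiber bundle $\mathscr E:=Y\times_BE\to Y$ with fibers $\A^n_S$, a section of $\mathscr E$ over $Z$, and the requirement that this section extend over all of $Y$. (The case $m=0$, where $Z=\emptyset$, records that such a bundle over an affine base has a section at all.) So the first assertion follows once one knows: \emph{for every affine $S$--scheme $Y$, every closed subscheme $Z\subset Y$, and every Zariski fiber bundle $\mathscr E\to Y$ with fibers isomorphic to $\A^n_S$, every section of $\mathscr E$ over $Z$ extends to a section over $Y$.}

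\textbf{The section--extension statement.} This is the heart of the matter. Choose a finite cover of $Y$ by affine opens $Y_i$ over which $\mathscr E$ is trivial. Over each $Y_i$ a section of $\mathscr E$ extending the given one over $Z\cap Y_i$ exists, simply because $\mathscr O(Y_i)^{n}\to\mathscr O(Z\cap Y_i)^{n}$ is surjective ($Z\cap Y_i$ is closed in the affine $Y_i$); the only problem is to choose the local extensions compatibly on overlaps. In the case most relevant to the applications, namely when $\mathscr E\to Y$ is a torsor under a vector bundle (a Jouanolou--Thomason device), there is no problem at all: over the affine scheme $Y$ the torsor is trivial, so $\mathscr E\cong Y\times\A^n$ and one only needs the surjectivity of $\mathscr O(Y)^{n}\to\mathscr O(Z)^{n}$. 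For a general Zariski $\A^n$--bundle the gluing obstruction is a priori non-abelian; I would resolve it by exploiting that the fibers are affine spaces and the base is affine, pushing the obstruction into first Zariski cohomology groups of $Y$ with quasi-coherent coefficients (which vanish), using the case $n=1$ -- torsors under line bundles on affine schemes, together with the surjectivity of $\Gamma(Y,\mathscr L)\to\Gamma(Z,\mathscr L|_Z)$ -- as the base of an induction. I expect this non-linear patching step to be the main obstacle: it is the only place where both ``$Y$ affine'' and ``fibers $\cong\A^n$'' are genuinely used, and handling the transition automorphisms of $\mathscr E$ (which need not be affine-linear, nor even tame, for $n\geq 3$) requires care.

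\textbf{The ``moreover''.} The morphism $\pi\colon E\to B$ is an $\A^1$--weak equivalence: Zariski--locally on $B$ it is a projection $U\times\A^n\to U$, and the class of $\A^1$--weak equivalences is local for the Nisnevich (hence Zariski) topology on the target. Hence, if $\tilde E\to\tilde B$ denotes the map of Nisnevich--local $\A^1$--invariant fibrant replacements induced by $\pi$, it is an objectwise weak equivalence, so $\tilde E(X)\to\tilde B(X)$ is a weak equivalence for every $X$. Now for $X$ affine consider the commutative square
\[
\xymatrix{
\Singaone E(X) \ar[r]\ar[d]_{\pi} & \tilde E(X) \ar[d]^{\tilde\pi} \\
\Singaone B(X) \ar[r] & \tilde B(X)
}
\]
whose horizontal arrows are the comparison maps of Definition~\ref{defn:aonenaive}. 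Its right vertical arrow is a weak equivalence, and by the first assertion its left vertical arrow is an acyclic Kan fibration, in particular a weak equivalence; so, by the two-out-of-three property, the top arrow is a weak equivalence if and only if the bottom one is. Letting $X$ range over $\Sm_S^{\aff}$, this says exactly that $E$ is $\A^1$--naive if and only if $B$ is.
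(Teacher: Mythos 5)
Your reduction of the first assertion to a lifting problem against $\partial\Delta^m\subset\Delta^m$, and hence to a section--extension statement for the pullback bundle over the affine scheme $X\times\Delta^m_{\aone}$, matches the paper's strategy, and your treatment of the ``moreover'' (two-out-of-three in the comparison square with fibrant replacements) is fine. But at the step you yourself flag as ``the heart of the matter'' there is a genuine gap: you do not prove, or even cite, the statement that sections of a general Zariski fiber bundle with fibers $\A^n$ over an affine base extend from closed subschemes. Your proposed repair --- pushing the gluing obstruction into first Zariski cohomology with quasi-coherent coefficients and inducting on $n$ starting from line bundles --- presupposes a linear (or at least filtered) structure on the bundle that simply is not available: the transition data of such a bundle consists of arbitrary automorphisms of polynomial algebras, the ``difference'' of two local extensions of a section is not a section of anything quasi-coherent, and there is no evident reduction of an $\A^n$--bundle to $\A^1$--bundles or to torsors under vector bundles. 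So the non-abelian patching problem is not resolved by the sketch.

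The paper closes exactly this gap by invoking the theorem of Bass--Connell--Wright (obtained independently by Suslin): a finitely presented locally polynomial algebra over a ring is the symmetric algebra of a finitely generated projective module, i.e.\ the pullback $\Delta^m_R\times_B E\to\Delta^m_R$ is automatically a geometric vector bundle. Once the bundle is linear, the extension of sections from $\partial\Delta^m_R$ to $\Delta^m_R$ is immediate from projectivity of the corresponding module (equivalently, surjectivity of $\hom_A(P,A)\to\hom_A(P,A/I)$). This is a substantive input --- its proof is of Quillen-patching type and is not something one recovers by a soft cohomological argument --- so your proposal is incomplete precisely at the point where both hypotheses ``affine base'' and ``fibers $\cong\A^n$'' do the real work. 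If you add an appeal to that theorem (or reprove it), the rest of your argument goes through and agrees with the paper's.
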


\begin{proof}
By Goerss and Jardine \cite[Theorem I.11.2]{GoerssJardine}, it suffices to show that for any integer $n \geq 0$, given a diagram of the form
\[
\xymatrix{
\partial \Delta^n_R \ar[r]\ar[d] & E \ar[d]^{\pi} \\
\Delta^n_R \ar[r] & B
}
\]
there is a morphism $\Delta^n_R \to E$ making both resulting triangles commute.

Given a diagram as above, there is an induced map $\partial \Delta^n_R \to \Delta^n_R \times_B E$.  By the assumption on $\pi$, the pullback $\pi': \Delta^n_R \times_B E \to \Delta^n_R$ makes the ring of functions on $\Delta^n_R \times_B E$ into a locally polynomial algebra over $R[t_1,\ldots,t_n]$ in the sense of Bass--Connell--Wright \cite[Theorem 4.4]{BCW}.  Therefore, by \cite[Theorem 4.4]{BCW} we conclude that $\pi'$ is a geometric vector bundle over $\Delta^n_R$, i.e., the spectrum of a symmetric algebra over $\Delta^n_R$.
Now, if $\mathscr{E} \to \Delta^n_R$ is a geometric vector bundle, then the inclusion map $\partial \Delta^n_R \to \Delta^n_R$ induces a surjective map $\hom(\Delta^n_R,\mathscr{E}) \to \hom(\partial \Delta^n_R,\mathscr{E})$.  Therefore, the lift we hoped to construct is guaranteed to exist.

For the second statement, let $\tilde E$ and $\tilde B$ be Nisnevich-local $\A^1$--invariant replacements of $E$ and $B$, respectively, and consider the commutative square of simplicial presheaves
\[
\xymatrix{
\Singaone E \ar[r]\ar[d] & \tilde E \ar[d] \\
\Singaone B \ar[r] & \tilde B.
}
\]
Since the left vertical map is a weak equivalence on affines, the right vertical map is a weak equivalence. It follows that the upper horizontal map is a weak equivalence on affines if and only if the lower horizontal map is.
\end{proof}

\begin{ex}
If $X \in \Sm_S^{\aff}$ is an affine scheme, then any finitely presented Zariski fiber bundle of $S$--schemes $\pi: E \to X$ with fibers isomorphic to affine spaces is actually a vector bundle by the result of Bass--Connell--Wright mentioned above \cite{BCW}; this result was obtained independently by Suslin \cite{Suslinlocallypolynomial}.  On the other hand, if $X$ is not affine, then even if $\pi$ admits a section, it may not be isomorphic to a vector bundle: see Iarrobino \cite[Theorem 1]{Iarrobino} for an example with $X = \pone$.
\end{ex}

\subsubsection*{Homogeneous spaces with non-reductive stabilizers}
The following result extends and simplifies the proof of a theorem of Morel \cite[Theorem 8.9]{MField} (in particular, we allow the case $n = 2$).

\begin{cor}
\label{cor:SingAnMinusZero}
If $R$ is a ring that is ind-smooth over a Dedekind ring with perfect residue fields, then ${\mathbb A}^n \setminus 0$ is $\A^1$--naive.  In particular, for any smooth affine $R$--scheme $X$, there is a canonical bijection
\[
\pi_0\Sing^{\A^1} ({\mathbb A}^n \setminus 0)(X) \isomto [X,{\mathbb A}^n \setminus 0]_{\aone}.
\]
\end{cor}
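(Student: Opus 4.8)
The plan is to exhibit $\A^n\setminus 0$ as the base of a Zariski-locally trivial fiber bundle with affine-space fibers whose total space is the odd split smooth affine quadric $Q_{2n-1}$, and then to transfer $\A^1$-naivety from $Q_{2n-1}$ — which we already know by Theorem~\ref{thm:oddquadric} — down to the base by means of Lemma~\ref{lem:affineinvarianceofsing}. This reproves (and extends) Morel's result without going through strong $\A^1$-invariance of $K_1$-functors, which is why the case $n=2$ is included.

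Concretely, write $Q_{2n-1}=\{(x,y)\in\A^n\times\A^n:\sum_{i=1}^n x_iy_i=1\}$ and consider the first projection $\pi\colon Q_{2n-1}\to\A^n$, $(x,y)\mapsto x$. Its scheme-theoretic image is exactly $\A^n\setminus 0$: the fiber over the origin is empty, while over any open subscheme on which some $x_i$ is invertible the equation is solvable. First I would check that the resulting surjection $Q_{2n-1}\to\A^n\setminus 0$ is a Zariski fiber bundle with fibers $\A^{n-1}$. Over the basic open $D(x_i)\subset\A^n\setminus 0$, solving $y_i=x_i^{-1}(1-\sum_{j\ne i}x_jy_j)$ yields an isomorphism $\pi^{-1}(D(x_i))\cong D(x_i)\times\A^{n-1}$ over $D(x_i)$, and the $D(x_i)$ for $i=1,\dots,n$ cover $\A^n\setminus 0$. (In fact $\pi$ is a torsor under the rank $n-1$ vector bundle $\{\sum x_iy_i=0\}$, but this refinement is not needed.) Since $\A^n\setminus 0\in\Sm_S$, the hypotheses of Lemma~\ref{lem:affineinvarianceofsing} are satisfied; the degenerate case $n=1$, where $\pi$ is an isomorphism onto $\gm{}$, causes no trouble.

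Granting the bundle structure, Lemma~\ref{lem:affineinvarianceofsing} shows that $Q_{2n-1}$ is $\A^1$-naive if and only if $\A^n\setminus 0$ is. By Theorem~\ref{thm:oddquadric}, the hypothesis that $R$ is ind-smooth over a Dedekind ring with perfect residue fields guarantees that $Q_{2n-1}$ is $\A^1$-naive, hence so is $\A^n\setminus 0$. The displayed bijection $\pi_0\Singaone(\A^n\setminus 0)(X)\isomto[X,\A^n\setminus 0]_{\aone}$ is then immediate from the first assertion of Remark~\ref{rem:homotopygroups}. There is no real obstacle here; the only point deserving any attention is the elementary verification that $\pi$ trivializes Zariski-locally over $\A^n\setminus 0$, the one subtlety being to work over $\A^n\setminus 0$ rather than over $\A^n$, across which the fiber dimension would jump.
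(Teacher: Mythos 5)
Your proposal is correct and follows essentially the same route as the paper: the paper also factors through the Zariski-locally trivial affine-space bundle $SL_n/SL_{n-1}\cong Q_{2n-1}\to\A^n\setminus 0$ and then combines Lemma~\ref{lem:affineinvarianceofsing} with Theorem~\ref{thm:oddquadric}. Your explicit trivialization over the opens $D(x_i)$ is just a spelled-out version of the same bundle structure, so there is nothing substantively different.
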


\begin{proof}
The map $SL_n \to {\mathbb A}^n \setminus 0$ given by ``projection onto the first column'' factors through a map $SL_n/SL_{n-1} \to {\mathbb A}^n \setminus 0$; this map is a Zariski fiber bundle with fibers isomorphic to affine spaces.  By Lemma~\ref{lem:affineinvarianceofsing}, it suffices to show that $SL_n/SL_{n-1}$ is $\A^1$--naive.  This follows from Theorem~\ref{thm:oddquadric} via the standard isomorphism $SL_n/SL_{n-1} \cong Q_{2n-1}$ (send a matrix in $SL_n$ to the first row and first column of its inverse).
\end{proof}

\begin{lem}\label{lem:connectedness}
	Let $X$ be a simplicial set and $k\geq 0$. If $X$ has the right lifting property with respect to the inclusion $\partial\Delta^m\subset\Delta^m$ for every $m\leq k+1$, then $X$ is $k$--connected.
\end{lem}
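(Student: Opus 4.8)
The plan is to reduce to the case of a Kan complex and then to kill the homotopy groups by elementary lifting arguments against products with $\Delta^1$. The first ingredient is an elementary closure property: the class of monomorphisms of simplicial sets against which $X\to\ast$ has the right lifting property is saturated (closed under cobase change, transfinite composition, and retracts) and, by hypothesis, contains $\partial\Delta^m\hookrightarrow\Delta^m$ for $m\le k+1$; hence $X$ has the right lifting property against \emph{every} monomorphism $A\hookrightarrow B$ such that each nondegenerate simplex of $B$ not in $A$ has dimension $\le k+1$. I will use this in three guises: against the horn inclusions $\Lambda^m_i\hookrightarrow\Delta^m$ with $m\le k+1$; against $\mathrm{sd}\,\partial\Delta^m\hookrightarrow\mathrm{sd}\,\Delta^m$ with $m\le k+1$ (valid since $\mathrm{sd}\,\Delta^m$ has dimension $m$); and against $(\Delta^n\times\partial\Delta^1)\cup(\partial\Delta^n\times\Delta^1)\hookrightarrow\Delta^n\times\Delta^1$ with $n\le k$ (valid since $\Delta^n\times\Delta^1$ has dimension $n+1$).

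Next I would pass to $Y:=\mathrm{Ex}^\infty X$, Kan's functorial fibrant replacement. By the adjunction $\mathrm{sd}\dashv\mathrm{Ex}$, the second guise above implies that $\mathrm{Ex}\,X$ has the right lifting property against $\partial\Delta^m\hookrightarrow\Delta^m$ for $m\le k+1$; iterating, so does every $\mathrm{Ex}^n X$, and since $\partial\Delta^m$ and $\Delta^m$ are compact, any lifting problem into the sequential colimit $\mathrm{Ex}^\infty X=\colim_n\mathrm{Ex}^n X$ factors through a finite stage and is solved there. Thus $Y$ inherits the hypothesis of the lemma; as $Y$ is a Kan complex and $X\to Y$ is a weak equivalence, it suffices to prove that $Y$ is $k$--connected.

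For the Kan complex $Y$, the case $m=0$ gives $Y_0\ne\emptyset$ and the case $m=1$ gives an edge joining any two vertices, so $Y$ is nonempty and $\pi_0(Y)=\ast$. For $1\le n\le k$ and any $y\in Y_0$, every element of $\pi_n(Y,y)$ is, since $Y$ is Kan, represented by a simplicial map $f\colon\Delta^n\to Y$ carrying $\partial\Delta^n$ to $y$, and I must produce a homotopy rel $\partial\Delta^n$ from $f$ to the constant map $c_y$. The map from $L:=(\Delta^n\times\partial\Delta^1)\cup(\partial\Delta^n\times\Delta^1)$ to $Y$ that equals $f$ on $\Delta^n\times\{0\}$ and $c_y$ on both $\Delta^n\times\{1\}$ and $\partial\Delta^n\times\Delta^1$ is well defined (the pieces agree with $c_y$ on the overlaps $\partial\Delta^n\times\{0,1\}$), and by the third guise above it extends to $\Delta^n\times\Delta^1\to Y$, which is the desired homotopy. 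Hence $\pi_n(Y,y)=0$ for all $1\le n\le k$, so $Y$, and therefore $X$, is $k$--connected.

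I expect the only real subtlety to be the first reduction, namely transporting the low-dimensional lifting hypothesis from the possibly non-Kan $X$ to a genuine Kan complex; this is exactly what the compatibility of that hypothesis with barycentric subdivision (the bounded dimension of $\mathrm{sd}\,\Delta^m$) and with filtered colimits takes care of. If one preferred to avoid $\mathrm{Ex}^\infty$, one could instead argue directly on $|X|$: simplicially approximate a continuous map $S^{m-1}\cong|\mathrm{sd}^N\partial\Delta^m|\to|X|$ by a simplicial map $\mathrm{sd}^N\partial\Delta^m\to X$, extend it over $\mathrm{sd}^N\Delta^m$ by the lifting property, and realize — with the extra observation that for $m-1\ge1$ a free null-homotopy of a map $S^{m-1}\to|X|$ can be upgraded to a based one by contracting the sphere to the basepoint within the disk.
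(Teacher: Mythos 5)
Your argument is correct. The first half coincides with the paper's proof: both use the saturation of the lifting class (any monomorphism whose relative nondegenerate simplices have dimension $\leq k+1$ is built from the boundary inclusions $\partial\Delta^m\subset\Delta^m$, $m\leq k+1$) together with the adjunction $\mathrm{sd}\dashv\mathrm{Ex}$ and compactness to transport the hypothesis from $X$ to the Kan complex $\mathrm{Ex}^\infty X$; the paper phrases this as ``$X$ has the right lifting property with respect to $\mathrm{sd}^r(\partial\Delta^m)\subset\mathrm{sd}^r(\Delta^m)$,'' which is the same reduction. Where you diverge is the endgame: the paper applies one more adjunction, $\mathrm{sk}_{k+1}\dashv\mathrm{cosk}_{k+1}$, to identify the $m\leq k+1$ lifting condition on $\mathrm{Ex}^\infty X$ with contractibility of $\mathrm{cosk}_{k+1}\mathrm{Ex}^\infty X$, and then invokes the standard characterization of $k$--connectedness via the coskeleton; you instead kill $\pi_0$ and $\pi_n$, $1\leq n\leq k$, by hand, using lifts against $(\Delta^n\times\partial\Delta^1)\cup(\partial\Delta^n\times\Delta^1)\subset\Delta^n\times\Delta^1$ to produce null-homotopies rel $\partial\Delta^n$. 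Your route is more elementary and self-contained (no coskeleton input needed), at the cost of explicit homotopy-group manipulation; the paper's is shorter because both reductions are adjunction formalities. One presentational point: your ``third guise'' is stated as a lifting property of $X$ but is actually invoked for $Y=\mathrm{Ex}^\infty X$; this is harmless since $Y$ inherits the lifting hypothesis for $m\leq k+1$ and hence the same saturation closure, but it should be stated for $Y$.
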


\begin{proof}
	A simplicial set $X$ is $k$--connected if and only if the Kan complex $\mathrm{cosk}_{k+1}\mathrm{Ex}^\infty X$ is contractible, or equivalently has the right lifting property with respect to $\partial\Delta^m\subset \Delta^m$ for all $m$.
	By adjunction, this is the case if and only if $\mathrm{Ex}^\infty X$ has the right lifting property with respect to $\partial\Delta^m\subset \Delta^m$ for $m\leq k+1$. By definition of $\mathrm{Ex}^\infty$, it suffices to show that $X$ itself has the right lifting property with respect to $\mathrm{sd}^r(\partial\Delta^m)\subset\mathrm{sd}^r(\Delta^m)$ for all $r$ and all $m\leq k+1$. In fact, $X$ has the right lifting property with respect to any monomorphism between $(k+1)$--skeletal simplicial sets, since such a monomorphism is a transfinite composition of pushouts of $\partial\Delta^m\subset\Delta^m$ for $m\leq k+1$.
\end{proof}

\begin{prop}
\label{prop:connectedness}
Let $n,k\geq 0$ and let $R$ be a commutative ring such that the Bass stable range of $R[t_0,\dotsc,t_k]$ is at most $n$.
Then the simplicial set $\Sing^{\A^1}(\A^n\setminus 0)(R)$ is $k$--connected.
In particular, if $R$ is Noetherian of Krull dimension $d$, then $\Sing^{\A^1}(\A^n\setminus 0)(R)$ is $(n-d-2)$--connected.
\end{prop}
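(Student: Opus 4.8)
The plan is to deduce this from Lemma~\ref{lem:connectedness}. Write $X:=\Singaone(\A^n\setminus 0)(R)$, and for a commutative ring $B$ let $\mathrm{Um}_n(B)$ denote the set of unimodular rows of length $n$ over $B$, so that $X_j=\mathrm{Um}_n(\O(\Delta^j_R))$ with $\O(\Delta^j_R)\cong R[t_1,\dots,t_j]$. By Lemma~\ref{lem:connectedness} it is enough to show that $X$ has the right lifting property with respect to $\partial\Delta^m\subset\Delta^m$ for every $m\le k+1$; we may assume $n\ge 1$, and the case $m=0$ is immediate since $(1,0,\dots,0)\in\mathrm{Um}_n(R)$.

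So fix $1\le m\le k+1$ and set $A:=\O(\Delta^m_R)\cong R[t_1,\dots,t_m]$. I would first recall the standard fact that the boundary $\partial\Delta^m_R$ is represented by the principal divisor in $\Delta^m_R\cong\A^m_R$ cut out by $h:=t_0t_1\cdots t_m$, where $t_0:=1-t_1-\dots-t_m$ (the faces $\{t_i=0\}$ are pairwise generically transverse, so that the product of their ideals equals their intersection, and a morphism to the quasi-affine scheme $\A^n\setminus 0$ from the colimit of the faces --- i.e.\ from $\Spec(A/(h))$ --- is precisely an element of $\mathrm{Um}_n(A/(h))$). Thus a map $\sigma\colon\partial\Delta^m_R\to\A^n\setminus 0$ is an element $\sigma\in\mathrm{Um}_n(A/(h))$, and filling it amounts to lifting $\sigma$ through $A\twoheadrightarrow A/(h)$ to an element of $\mathrm{Um}_n(A)$.

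The key step will be a single application of the stable range hypothesis. Choose any preimage $f=(f_1,\dots,f_n)\in A^n$ of $\sigma$. Since $\sigma$ is unimodular modulo $(h)$, there are $b_1,\dots,b_n,c\in A$ with $f_1b_1+\dots+f_nb_n+hc=1$, so $(f_1,\dots,f_n,h)$ is a unimodular row of length $n+1$ over $A$. Because $m\le k+1$, the ring $A\cong R[t_1,\dots,t_m]$ is a quotient of $R[t_0,\dots,t_k]$, and since the Bass stable range does not increase under passage to quotient rings, the stable range of $A$ is at most $n$. Hence $(f_1,\dots,f_n,h)$ can be shortened: there exist $\lambda_1,\dots,\lambda_n\in A$ with $(f_1+\lambda_1h,\dots,f_n+\lambda_nh)\in\mathrm{Um}_n(A)$. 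This row reduces to $\sigma$ modulo $(h)$ and so defines a morphism $\Delta^m_R\to\A^n\setminus 0$ extending $\sigma$, which is the required lift. Lemma~\ref{lem:connectedness} then gives that $X$ is $k$--connected.

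For the final assertion I would take $R$ Noetherian of Krull dimension $d$ and set $k:=n-d-2$ (the statement being vacuous if $k<-1$); then $R[t_0,\dots,t_k]$ is Noetherian of Krull dimension $d+k+1=n-1$, so its Bass stable range is at most $n$ by Bass's theorem, and the first part applies. I expect the only genuine subtlety to be the identification of $\partial\Delta^m_R$ with the hypersurface $\{h=0\}$, and hence of maps out of it into $\A^n\setminus 0$ with unimodular rows over $A/(h)$; once that is granted, the argument is exactly the observation that ``unimodular along the boundary'' is the statement that $(f,h)$ is a unimodular row of length $n+1$, which the stable range hypothesis lets us contract back to length $n$ without disturbing its values on the boundary.
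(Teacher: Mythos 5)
Your proposal is correct and takes essentially the same route as the paper: Lemma~\ref{lem:connectedness} reduces the statement to surjectivity of $Um_n(\Delta^m_R)\to Um_n(\partial\Delta^m_R)$ for $m\leq k+1$, and the paper obtains this from the stable range bound (which does not increase under quotients) via precisely the shortening argument you spell out, phrased there as a general surjectivity statement for restriction of unimodular rows to closed subschemes; your use of the principal equation $h=t_0\cdots t_m$ is just the special case needed here. The identification of simplicial maps out of $\partial\Delta^m$ with unimodular rows over $\O(\partial\Delta^m_R)$, which you flag as the main subtlety, is used implicitly in the paper as well.
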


\begin{proof}
By Lemma~\ref{lem:connectedness}, it suffices to show that the map
\[
Um_n(\Delta^m_R) \to Um_n(\partial\Delta^m_R)
\]
is surjective for all $m\leq k+1$, where $Um_n(X)=\hom(X,\A^n\setminus 0)$ is the set of unimodular rows of length $n$ in $\O(X)$. By assumption, the Bass stable range of $\Delta^{k+1}_R$ is at most $n$. It follows that the Bass stable range of $\Delta^m_R$ is at most $n$, for all $m\leq k+1$.
Now the result is a special case of the following more general statement, which follows easily from the definition of Bass stable range: if $X$ is an affine scheme of Bass stable range $\leq n$ and $Y\subset X$ is a finitely presented closed subscheme, then the map $Um_n(X) \to Um_n(Y)$ is surjective.
\end{proof}

\begin{rem}
Under the assumption of Corollary~\ref{cor:SingAnMinusZero}, if $n \geq 3$, the set $\pi_0\Sing^{\A^1} ({\mathbb A}^n \setminus 0)(X)$ has a concrete description due to Fasel \cite[Theorem 2.1]{FaselUnimodular}.  Indeed, it is the quotient of the set $Um_{n}(X)$ of unimodular rows of length $n$ by the action of the subgroup $E_n(X) \subset SL_n(X)$ generated by elementary shearing matrices. In \textit{loc.~cit.}, it is assumed that $R$ is a field, but the proof works more generally using a result of Lindel--Popescu \cite[Proposition 2.1]{Popescu}. Taking $X=Q_{2n-1}$, we obtain a bijection
\[
[\A^n\setminus 0,\A^n\setminus 0]_{\A^1}\cong Um_n(Q_{2n-1})/E_n(Q_{2n-1}).
\]
By Corollary~\ref{cor:SingAnMinusZero}, we have $[S^1,{\mathbb A}^n \setminus 0]_{\aone,*} \cong \pi_1\Sing^{\A^1} ({\mathbb A}^n \setminus 0)(R)$, and Proposition~\ref{prop:connectedness} shows that this group is trivial if $n$ is at least the Bass stable range of $R[t_0,t_1]$. In that case, we may therefore identify $[\A^n\setminus 0,\A^n\setminus 0]_{\A^1}$ with the set of maps in the \emph{pointed} $\A^1$--homotopy category.  Note that $\colim_n [\A^n\setminus 0,\A^n\setminus 0]_{\A^1,*}$ is the set of endomorphisms of the motivic sphere spectrum over the ring $R$.
\end{rem}

The following result is Theorem~\ref{thmintro:parabolics}.

\begin{thm}
\label{thm:parabolics}
If $k$ is an infinite field, $G$ is an isotropic reductive $k$--group (see \textup{Definition~\ref{defn:isotropic}}) and $P \subset G$ is a parabolic $k$--subgroup possessing an isotropic Levi factor (e.g., if $G$ is split), then $G/P$ is $\A^1$--naive. In particular, for any smooth affine $k$--scheme $X$, there is a functorial bijection
\[
\pi_0(\Singaone G/P)(X) \isomto [X,G/P]_{\aone}.
\]
\end{thm}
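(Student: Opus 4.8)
The plan is to reduce the statement for $G/P$ to the corresponding statement for $G/L$, where $L$ is an isotropic Levi factor of $P$, and then to deduce the latter from Theorem~\ref{thm:homogeneousrep}.

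First I would invoke Lemma~\ref{lem:parabolics}: since $k$ is a field, $\Spec k$ is connected, so the quotients $G/L$ and $G/P$ exist as smooth $k$--schemes and the morphism $G/L \to G/P$ is a finite composition of torsors under vector bundles. A torsor under a vector bundle $\mathscr E$ on a scheme $B$ is, Zariski locally on $B$ (over any affine open trivializing $\mathscr E$, on which $H^1$ of a free sheaf vanishes), isomorphic to a trivial affine space over $B$; hence it is a Zariski fiber bundle with affine--space fibers in the sense of the paragraph preceding Lemma~\ref{lem:affineinvarianceofsing}. Every scheme occurring in the resulting tower is smooth over $k$, being built from the smooth $k$--schemes $G/P$ and $G/L$. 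Applying Lemma~\ref{lem:affineinvarianceofsing} to each stage of the tower, I conclude that $G/P$ is $\A^1$--naive if and only if $G/L$ is $\A^1$--naive.

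It therefore remains to prove that $G/L$ is $\A^1$--naive, which I would do by checking the hypotheses of Theorem~\ref{thm:homogeneousrep} with $S=\Spec k$ and $H=L$. The group schemes $G$ and $L$ are finitely presented smooth affine $k$--group schemes (recall a Levi factor is smooth reductive), $L$ is a closed subgroup of $G$, and $G/L$ exists as a $k$--scheme by Lemma~\ref{lem:parabolics}(i). By Lemma~\ref{lem:parabolics}(ii), $G\to G/L$ is a generically trivial $L$--torsor; since $\Spec k$ is regular Noetherian, Remark~\ref{rem:levisexist}(2) (resting on Bia\l ynicki-Birula and Nisnevich) shows that a generically trivial $L$--torsor over a Henselian local ring of a smooth $k$--scheme is trivial, so $G\to G/L$ is Nisnevich locally split. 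Finally, $H^1_{\Nis}(-,G)$ is $\A^1$--invariant on $\Sm_k^\aff$ by Theorem~\ref{thm:hoinvisotropic} because $G$ is isotropic, and $H^1_{\Nis}(-,L)$ is $\A^1$--invariant on $\Sm_k^\aff$ by Theorem~\ref{thm:hoinvisotropic} because $L$ is isotropic by hypothesis. Hence Theorem~\ref{thm:homogeneousrep} applies and $G/L$ is $\A^1$--naive. Combining this with the previous step shows $G/P$ is $\A^1$--naive, and the asserted functorial bijection $\pi_0(\Singaone G/P)(X)\isomto [X,G/P]_{\aone}$ for $X\in\Sm_k^\aff$ is then immediate from the definition of $\A^1$--naive together with Remark~\ref{rem:homotopygroups}.

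The main obstacle, such as it is, lies in the verification that $G\to G/L$ is Nisnevich locally split: generic triviality does not by itself yield Nisnevich local triviality, and one genuinely needs the field hypothesis together with the triviality of generically trivial reductive--group torsors over Henselian local rings. The parenthetical ``e.g., if $G$ is split'' is the observation that for split $G$ every parabolic admits a split reductive Levi factor, and every split reductive $k$--group is isotropic in the sense of Definition~\ref{defn:isotropic} (each almost--$k$--simple factor of the derived group is split and nontrivial, hence contains a split maximal torus and in particular a copy of $\gm{}$).
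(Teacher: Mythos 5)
Your proposal is correct and follows essentially the same route as the paper: reduce $G/P$ to $G/L$ via Lemma~\ref{lem:parabolics}(iii) and Lemma~\ref{lem:affineinvarianceofsing}, then apply Theorem~\ref{thm:homogeneousrep} with the homotopy-invariance input from Theorem~\ref{thm:hoinvisotropic} for both $G$ and the isotropic Levi $L$. The only (harmless) difference is in verifying that $G\to G/L$ is Nisnevich locally split: you use the Henselian-local Grothendieck--Serre/Nisnevich argument of Remark~\ref{rem:levisexist}(2), while the paper instead asserts the stronger Zariski local triviality via the elementary big-cell translation argument over an infinite field (or \cite[Th\'eor\`eme 2.1]{CTO92}); either suffices for Theorem~\ref{thm:homogeneousrep}.
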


\begin{rem}
Given a reductive $k$--group and a non-trivial parabolic subgroup $P \subset G$, it is not obvious that $P$ has a Levi factor.  Nevertheless, as mentioned in Remark~\ref{rem:levisexist}, our hypotheses guarantee that $P$ has a Levi factor.  If $L$ is a Levi factor for $P$, then $L$ may itself be anisotropic.
\end{rem}

\begin{proof}
Lemma~\ref{lem:parabolics}(ii) implies that $G \to G/L$ is generically trivial.  Since $k$ is assumed infinite and $L$ is reductive, we claim $G \to G/L$ is actually Zariski locally trivial.  An elementary argument for Zariski local triviality of $G \to G/L$ sketched in Remark~\ref{rem:levisexist}(2), but alternatively we can use \cite[Th\'eor\`eme 2.1]{CTO92}, to which, momentarily, implicit appeal will be made.

By Theorem~\ref{thm:homogeneousrep}, whose hypotheses hold by Theorem~\ref{thm:hoinvisotropic}, we conclude that $G/L$ is $\A^1$--naive.
By Lemma~\ref{lem:parabolics}(iii), $G/L \to G/P$ is a composition of Zariski fiber bundles with affine space fibers.  Hence, $G/P$ is also $\A^1$--naive by Lemma~\ref{lem:affineinvarianceofsing}.
\end{proof}

The above result can be significantly strengthened at the expense of further restrictions on the groups under consideration.

\begin{thm}
\label{thm:homogeneousspacesunderspecialgroups}
Suppose $R$ is ind-smooth over a Dedekind ring with perfect residue fields (for example, $R$ is Noetherian and regular over such a Dedekind ring).  If $G \cong GL_n$ or $Sp_{2n}$, and if $P \subset G$ is a standard parabolic subgroup, then $G/P$ is $\A^1$--naive. In particular, for any smooth affine $R$--scheme $X$, there is a functorial bijection
\[
\pi_0(\Singaone G/P)(X) \isomto [X,G/P]_{\aone}.
\]
\end{thm}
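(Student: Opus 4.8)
The plan is to run the argument of Theorem~\ref{thm:parabolics} again, with the homotopy invariance input (Theorem~\ref{thm:hoinvisotropic}) replaced by the more elementary statements available over the base $R$ in play: the Bass--Quillen result \cite[Theorem 5.2.1]{PartI} for $GL_m$ and Theorem~\ref{thm:sympletichomotopyinvariance} for $Sp_{2m}$. Throughout, all the group schemes involved are split and defined over $\Z$: $G$ is $GL_n$ or $Sp_{2n}$, a standard parabolic $P$ is defined over $\Z$, and a Levi factor $L\subset P$ is defined over $\Z$; by inspection of the root data, $L$ is isomorphic to a finite product of copies of $GL_m$ in the case $G=GL_n$, and to a finite product of copies of $GL_m$ together with one factor $Sp_{2m'}$ in the case $G=Sp_{2n}$ (with the convention that $GL_0$ and $Sp_0$ are trivial). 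In particular $L$ is a product of special groups, hence itself special.

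First I would record the structural facts. Applying Lemma~\ref{lem:parabolics} over the connected ring $\Z$ and then base changing to $R$, the quotients $G/L$ and $G/P$ exist as smooth $R$--schemes, $G\to G/L$ is an $L$--torsor, and $G/L\to G/P$ is a composition of torsors under vector bundles, i.e.\ of Zariski fiber bundles with affine space fibers in the sense of Lemma~\ref{lem:affineinvarianceofsing}. Since $L$ is special, the $L$--torsor $G\to G/L$ is automatically Zariski locally trivial, hence a fortiori Nisnevich locally split.

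Next I would verify the remaining hypotheses of Theorem~\ref{thm:homogeneousrep} for the closed subgroup $L\subset G$, namely that $H^1_{\Nis}(-,G)$ and $H^1_{\Nis}(-,L)$ are $\A^1$--invariant on $\Sm_R^\aff$. Because $H^1_{\Nis}(-,A\times B)\cong H^1_{\Nis}(-,A)\times H^1_{\Nis}(-,B)$, it suffices to treat $GL_m$ and $Sp_{2m}$ separately. For any $X=\Spec A\in\Sm_R^\aff$, the ring $A$ is again ind-smooth over the given Dedekind ring (a finitely presented smooth algebra over an ind-smooth algebra is ind-smooth), so \cite[Theorem 5.2.1]{PartI} shows that every vector bundle on $A[t]$ is extended from $A$; via the identification $H^1_{\Nis}(-,GL_m)\cong\mathscr V_m$ of Example~\ref{ex:bundlesandtorsors} this yields the desired $\A^1$--invariance for $GL_m$. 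For $Sp_{2m}$, the hypotheses of Theorem~\ref{thm:sympletichomotopyinvariance} are inherited by $A$, and that theorem gives the $\A^1$--invariance of $\mathscr{HV}_{2m}\cong H^1_{\Nis}(-,Sp_{2m})$. Hence Theorem~\ref{thm:homogeneousrep} applies and $G/L$ is $\A^1$--naive.

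Finally, since $G/L\to G/P$ is a composition of Zariski fiber bundles with affine space fibers, Lemma~\ref{lem:affineinvarianceofsing} shows that $G/P$ is $\A^1$--naive if and only if $G/L$ is; thus $G/P$ is $\A^1$--naive, and the asserted bijection $\pi_0(\Singaone G/P)(X)\isomto[X,G/P]_{\aone}$ then follows from Definition~\ref{defn:aonenaive} together with Remark~\ref{rem:homotopygroups}. The only delicate points are bookkeeping: checking that the conclusions of Lemma~\ref{lem:parabolics}, stated over a connected base, descend to the possibly disconnected ring $R$ (which is immediate here since $G$, $P$, $L$ are all obtained by base change from $\Z$), and that the homotopy invariance inputs for $GL_m$ and $Sp_{2m}$ remain valid with $R$ replaced by an arbitrary smooth $R$--algebra and assemble correctly over a product group. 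There is no deeper obstacle; this is the ``special groups'' analogue of Theorem~\ref{thm:parabolics} with more elementary ingredients in place of Theorem~\ref{thm:hoinvisotropic}.
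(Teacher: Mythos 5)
Your proposal is correct and follows essentially the same route as the paper: reduce the structural statements (existence of quotients, the Levi decomposition, Lemma~\ref{lem:parabolics}) to the split groups over $\Z$ and base change, use specialness of the Levi $L$ (a product of $GL_m$'s and possibly one $Sp_{2m'}$) to get Zariski local triviality of $G\to G/L$, apply Theorem~\ref{thm:homogeneousrep} with the homotopy invariance inputs \cite[Theorem 5.2.1]{PartI} and Theorem~\ref{thm:sympletichomotopyinvariance} in place of Theorem~\ref{thm:hoinvisotropic}, and then pass from $G/L$ to $G/P$ via Lemma~\ref{lem:parabolics}(iii) and Lemma~\ref{lem:affineinvarianceofsing}. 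Your extra bookkeeping (product decomposition of $H^1_{\Nis}$ over the Levi, inheritance of the ind-smoothness hypothesis by smooth $R$--algebras) just makes explicit what the paper leaves implicit.
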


\begin{proof}
Assume first that $R = \Z$.  If $P \subset G$ is a standard parabolic with Levi factor $L$, then $L$ is itself a special group in the sense of Grothendieck--Serre, i.e., all \'etale locally trivial torsors are Zariski locally trivial.  Thus, the map $G \to G/L$ in Lemma~\ref{lem:parabolics}(ii) is automatically Zariski locally trivial.  One sees that the map $G/L \to G/P$ is a Zariski fiber bundle with affine space fibers by combining Lemma~\ref{lem:parabolics}(iii) with the fact that all finitely generated projective $\Z$--modules are free.  By extending scalars to $R$, it follows that corresponding statements hold for the resulting group scheme over $R$.

With these modifications, the proof is essentially identical to that of Theorem~\ref{thm:parabolics}; however, instead of appealing to Theorem~\ref{thm:hoinvisotropic}, we use Theorem~\ref{thm:sympletichomotopyinvariance} or \cite[Theorem 5.2.1]{PartI} to establish the necessary homotopy invariance statement.
\end{proof}

\begin{ex}
\label{ex:poneoveraring}
Theorem \ref{thm:homogeneousspacesunderspecialgroups} applies if $P \subset GL_n$ is a maximal parabolic subgroup, in which case $G/P \cong \mathrm{Gr}_{m,n}$ for some integer $m \leq n$.
\end{ex}

\subsection{Affine representability for non-stable K-theory and strong $\A^1$--invariance results}
\label{ss:strongaoneinvariance}
Suppose $G$ is a smooth linear $R$--group scheme.  For any integer $i \geq 1$, one can define Karoubi--Villamayor-style non-stable K-theory functors attached to $G$ by means of the formula:
\[
KV_{i+1}^G(U) := \pi_i(\Singaone G)(U)
\]
In this form, the definition goes back to Jardine \cite[Theorem 3.8]{Jardineunstablektheory}, but had precursors in the work of Krusemeyer \cite[\S 3]{Krusemeyer}; see Wendt \cite{WendtChevalley} for a more detailed analysis of such functors in the context of $\aone$--homotopy theory.  As a straightforward application of our results, we obtain $\aone$--representability results for non-stable $KV$--functors.

\begin{thm}
\label{thm:isotropicaonelocal}
If $k$ is an infinite field, and $G$ is an isotropic reductive $k$--group (in the sense of \textup{Definition~\ref{defn:isotropic}}), then $G$ is $\A^1$--naive.
In particular, for any smooth affine $k$--scheme $U$, there are canonical isomorphisms
\[
KV_{i+1}^G(U) \cong [S^i \wedge U_+,G]_{\aone,*}.
\]
\end{thm}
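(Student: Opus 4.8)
The statement to prove is that for $k$ an infinite field and $G$ an isotropic reductive $k$--group, $G$ is $\A^1$--naive, and consequently $KV_{i+1}^G(U) = \pi_i(\Singaone G)(U) \cong [S^i \wedge U_+, G]_{\aone,*}$ for smooth affine $U$. The natural strategy is to realize $G$ as the homotopy fiber of $* \to BG$ and apply Theorem~\ref{thm:fiberaffinerepresentability}. More precisely, consider the homotopy fiber sequence of pointed simplicial presheaves
\[
G \longrightarrow * \longrightarrow B_{\Nis}G,
\]
which exhibits $G$ as $\mathbf R\Omega B_{\Nis}G$ by Lemma~\ref{lem:BG}(iii). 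To apply Theorem~\ref{thm:fiberaffinerepresentability} to this sequence with $\mathscr G = *$ and $\mathscr H = B_{\Nis}G$, I need: (i) $\mathscr G = *$ and $\mathscr H = B_{\Nis}G$ satisfy affine Nisnevich excision, and (ii) $\pi_0(\mathscr G)$ and $\pi_0(\mathscr H)$ are $\aone$--invariant on affine schemes. Condition (i) holds for $*$ trivially and for $B_{\Nis}G$ because it is Nisnevich-local by construction, hence satisfies Nisnevich excision by \cite[Theorem 3.2.5]{PartI}. Condition (ii) is trivial for $* $, and for $B_{\Nis}G$ it amounts to $\aone$--invariance of $\pi_0(B_{\Nis}G) \cong H^1_{\Nis}(-,G)$ on $\Sm_k^\aff$ (using Lemma~\ref{lem:BG}(ii)), which is exactly Theorem~\ref{thm:hoinvisotropic} applied to the isotropic reductive group $G$. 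Thus Theorem~\ref{thm:fiberaffinerepresentability} yields that $G$ (the homotopy fiber) is $\A^1$--naive.

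For the ``In particular'' clause, once $G$ is known to be $\A^1$--naive, Remark~\ref{rem:homotopygroups} directly gives, for any pointed presheaf that is $\A^1$--naive, the identification $\pi_n(\Singaone G)(U) \cong [S^n \wedge U_+, G]_{\aone,*}$ for every $U \in \Sm_k^\aff$ and $n \geq 0$. Applying this with $n = i$ and unwinding the definition $KV_{i+1}^G(U) := \pi_i(\Singaone G)(U)$ gives the asserted isomorphism $KV_{i+1}^G(U) \cong [S^i \wedge U_+, G]_{\aone,*}$. (One should note that $G$ is pointed by its identity section, so Remark~\ref{rem:homotopygroups} applies; also $G$ being reductive it is in particular a smooth linear $k$--group scheme, so $KV$ is defined.)

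I do not anticipate a serious obstacle here: the result is essentially an assembly of Theorem~\ref{thm:fiberaffinerepresentability}, Theorem~\ref{thm:hoinvisotropic}, Lemma~\ref{lem:BG}, and Remark~\ref{rem:homotopygroups}. The only point requiring a sentence of care is verifying that the homotopy fiber of $* \to B_{\Nis}G$ is indeed $G$ (rather than merely $\mathbf R\Omega B_{\Nis}G$), which is Lemma~\ref{lem:BG}(iii); and that $B_{\Nis}G$ is a legitimate choice of $\mathscr H$ rather than needing $B\mathbf{Tors}_{\Nis}(G)$ — but Lemma~\ref{lem:BG}(i) identifies the two up to weak equivalence, so either works. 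If one wanted to be careful about the pointedness/basepoint issue, one notes that $G \to * \to B_{\Nis}G$ is a homotopy fiber sequence of pointed simplicial presheaves in the sense defined before Proposition~\ref{prop:fiberSequence} (top-right corner a point), so Theorem~\ref{thm:fiberaffinerepresentability} applies verbatim.

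Concretely, the proof reads as follows.

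\begin{proof}
Since $G$ is reductive it is a smooth linear $k$--group scheme, so the functors $KV_{i+1}^G$ are defined.  By Lemma~\ref{lem:BG}(iii), there is a homotopy fiber sequence of pointed simplicial presheaves
\[
G \longrightarrow * \longrightarrow B_{\Nis}G.
\]
The simplicial presheaf $B_{\Nis}G$ is Nisnevich-local by construction, so it satisfies affine Nisnevich excision by \cite[Theorem 3.2.5]{PartI}; the same holds trivially for $*$.  By Lemma~\ref{lem:BG}(ii) we have $\pi_0(B_{\Nis}G) \cong H^1_{\Nis}(-,G)$, which is $\aone$--invariant on $\Sm_k^\aff$ by Theorem~\ref{thm:hoinvisotropic}, and $\pi_0(*)$ is evidently $\aone$--invariant.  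Hence the hypotheses of Theorem~\ref{thm:fiberaffinerepresentability} are satisfied, and we conclude that $G$ is $\A^1$--naive.

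Since $G$ is pointed by its identity section, Remark~\ref{rem:homotopygroups} applies and yields, for every $U \in \Sm_k^\aff$ and every $i \geq 0$,
\[
KV_{i+1}^G(U) = \pi_i(\Singaone G)(U) \cong [S^i \wedge U_+, G]_{\aone,*},
\]
as claimed.
\end{proof}
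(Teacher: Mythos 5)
Your proof is correct and follows essentially the same route as the paper: the paper simply invokes Theorem~\ref{thm:homogeneousrep} with $H = e$ (hypotheses supplied by Theorem~\ref{thm:hoinvisotropic}), and your argument is precisely the unwinding of that theorem's proof in this special case, namely applying Theorem~\ref{thm:fiberaffinerepresentability} to the fiber sequence $G \to * \to B_{\Nis}G$ via Lemma~\ref{lem:BG}. The deduction of the $KV$--statement from Remark~\ref{rem:homotopygroups} is likewise exactly what the paper intends.
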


\begin{proof}
Apply Theorem~\ref{thm:homogeneousrep} with $H = e$ (hypotheses being satisfied by Theorem~\ref{thm:hoinvisotropic}).
\end{proof}

\begin{rem}
\label{rem:isotropicaonelocal}
Results such as the above were studied initially by Morel \cite[Theorem 8.1]{MField} and Moser \cite{MoserSL2} (see also \cite[Theorem 5.3]{WendtRationallyTrivial}) for $G$ a general split group, and by the third author and K V{\"o}lkel in the isotropic reductive case \cite{VoelkelWendt}. These results depend crucially on first establishing homotopy invariance for non-stable $K_1$--functors via  ``elementary matrix'' techniques.  As a consequence these proofs do not easily extend to the important case where $G$ has semi-simple rank $1$, which was treated separately by Moser.  Our proof above makes no such assumption on the homotopy invariance of non-stable $K_1$--functors.  As a consequence, Theorem~\ref{thm:isotropicaonelocal} can also be used to slightly uniformize the proof of \cite[Theorem 3.4]{BalweSawant}.
\end{rem}

We can also establish the strong $\aone$--invariance of the sheafifications of the non-stable $K_1$--presheaves attached to arbitrary isotropic reductive $k$--groups with $k$ infinite.

\begin{thm}
\label{thm:wha1invariance}
Suppose $k$ is an infinite field, and $G$ is an isotropic reductive $k$--group (in the sense of \textup{Definition~\ref{defn:isotropic}}).  For any integer $n \geq 0$, the following statements hold.
\begin{enumerate}
\item[(i)] The Zariski sheaf $a_{\Zar}\pi_n(\Singaone G)$ is a Nisnevich sheaf.
\item[(ii)] The sheaf $a_{\Zar}\pi_n(\Singaone G)$ is strongly $\aone$--invariant.
\end{enumerate}
\end{thm}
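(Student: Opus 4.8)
\emph{Proof proposal.} The plan is to identify $a_{\Zar}\pi_n(\Singaone G)$ with an $\aone$--homotopy sheaf of the classifying space of $G$ and then to invoke strong $\aone$--invariance for such sheaves; note that since $\Singaone G$ is sectionwise a simplicial group, $\pi_n(\Singaone G)$ is a presheaf of groups (abelian for $n\geq 1$), so the statements make sense. By Theorem~\ref{thm:isotropicaonelocal} the group $G$ is $\aone$--naive, so Proposition~\ref{prop:A1naive} shows that $\mathscr G:=R_{\Zar}\Singaone G$ is Nisnevich--local and $\aone$--invariant and that $\Singaone G(U)\to\mathscr G(U)$ is a weak equivalence for every $U\in\Sm_k^\aff$. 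Since $G$ is finitely presented, $\Singaone G$ and $R_{\Zar}$ commute with the cofiltered systems of affine opens computing Zariski and Nisnevich stalks, so $a_{\Zar}\pi_n(\Singaone G)\cong a_{\Zar}\pi_n(\mathscr G)$; moreover the canonical map $G\to\mathscr G$ is an $\aone$--weak equivalence onto the $\aone$--local target $\mathscr G$, so $\mathscr G\simeq\Laone G$ and $a_{\Nis}\pi_n(\mathscr G)=\piaone_n(G)$. Thus assertion (i) is the statement that $a_{\Zar}\pi_n(\mathscr G)\to a_{\Nis}\pi_n(\mathscr G)$ is an isomorphism, and (ii) is strong $\aone$--invariance of $\piaone_n(G)$.

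Next I would exhibit $\mathscr G$ as a one--fold $\aone$--loop space. The classifying space $B_{\Nis}G$ is Nisnevich--local, hence satisfies affine Nisnevich excision, and by Lemma~\ref{lem:BG}(ii) its $\pi_0$ is $H^1_{\Nis}(-,G)$, which is $\aone$--invariant on $\Sm_k^\aff$ by Theorem~\ref{thm:hoinvisotropic}; therefore Theorem~\ref{thm:Gtorsors}(i) shows that $\mathscr B:=R_{\Zar}\Singaone B_{\Nis}G$ is Nisnevich--local and $\aone$--invariant, i.e.\ $\mathscr B\simeq\Laone BG$. Since $R_{\Zar}$ commutes with $\mathbf R\Omega$ (it preserves homotopy fiber sequences, as in the proof of Theorem~\ref{thm:A1fibration}) and, by Corollary~\ref{cor:SingOmega} applied on $\Sm_k^\aff$ together with \cite[Lemma 3.3.2]{PartI}, so does $\Singaone$, the equivalence $\mathbf R\Omega B_{\Nis}G\simeq G$ of Lemma~\ref{lem:BG}(iii) yields $\mathbf R\Omega\mathscr B\simeq\mathscr G$. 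Hence $a_{\Zar}\pi_n(\mathscr G)\cong a_{\Zar}\pi_{n+1}(\mathscr B)$ and $a_{\Nis}\pi_n(\mathscr G)\cong a_{\Nis}\pi_{n+1}(\mathscr B)=\piaone_{n+1}(BG)$, so for every $n\geq 0$ both assertions reduce to the statement that, for $m=n+1\geq 1$, the presheaf $\pi_m(\mathscr B)$ has isomorphic Zariski and Nisnevich sheafifications and this common sheaf is strongly $\aone$--invariant.

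Finally I would invoke the corresponding property of a pointed $\aone$--local simplicial presheaf (applied to $\mathscr B$): for $m\geq 1$, the presheaf $\pi_m(\Laone\mathscr X)$ of a pointed $\mathscr X$ has isomorphic Zariski and Nisnevich sheafifications, and this sheaf $\piaone_m(\mathscr X)$ is strongly $\aone$--invariant. Taking $\mathscr X=BG$, $m=n+1$, and unwinding the two preceding steps proves both (i) and (ii).

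The main obstacle is this last input. Over a perfect field it is Morel's theorem \cite{MField}; here $k$ is only assumed infinite, so one must either check that the proof of \emph{strong} (as opposed to \emph{strict}) $\aone$--invariance of $\piaone_m$ does not use perfectness, or --- in accordance with the ``representability implies strong $\aone$--invariance'' principle stressed in the introduction --- give a self-contained argument via a Postnikov--tower induction over the $\aone$--local space $\mathscr B$, whose only geometric inputs are affine Nisnevich excision and the $\aone$--invariance of $H^1_{\Nis}(-,G)$ on affine schemes (Theorem~\ref{thm:hoinvisotropic}), supplemented by the local--to--global principle (Corollary~\ref{cor:localtoglobalisotropic}) to compare sections over a local ring with those over its henselization.
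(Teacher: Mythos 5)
Your reduction is the same one the paper uses: exhibit the relevant sheaves as homotopy sheaves of $\mathscr B=R_{\Zar}\Singaone B_{\Nis}G$, which is Nisnevich-local and $\aone$--invariant by Theorems~\ref{thm:Gtorsors}(i) and \ref{thm:hoinvisotropic}, pass through Corollary~\ref{cor:SingOmega} and Lemma~\ref{lem:BG}(iii) to identify $a_{\Zar}\pi_n(\Singaone G)$ with $a_{\Zar}\pi_{n+1}(\Singaone B_{\Nis}G)$, and then appeal to Morel. (Your preliminary step via Theorem~\ref{thm:isotropicaonelocal} is harmless but not needed; the paper sheafifies directly without first replacing $\Singaone G$ by $R_{\Zar}\Singaone G$.) The only point where you stop short is exactly the input you flag as the ``main obstacle,'' and the resolution is the first of your two options: the paper applies Morel's results to $\mathscr X=\mathbf R\Omega^n R_{\Zar}\Singaone B_{\Nis}G$, namely \cite[Theorem 6.1]{MField} (strong $\aone$--invariance of $a_{\Nis}\pi_1$ of a Nisnevich-local $\aone$--invariant pointed space) for part (ii) and \cite[Corollary 6.9(2)]{MField} (the map $a_{\Zar}\pi_1\to a_{\Nis}\pi_1$ is an isomorphism, with the observation that the standing triviality assumption on $a_{\Nis}\pi_0$ is not used there) for part (i); as recorded in Remark~\ref{rem:strictaoneinvariance}, these particular results from \cite[Chapter 6]{MField} do not require $k$ to be perfect --- perfectness (and abelianness) only enters for the strict $\aone$--invariance statements via \cite[Theorem 5.46]{MField}. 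So no Postnikov-tower substitute is necessary; once you make those two citations precise and note that part (i) is itself part of Morel's statement rather than something to be proved separately, your argument coincides with the paper's proof.
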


\begin{proof}
We begin by recalling some key results of Morel \cite[Chapter 6]{MField}.  If $\mathscr{X}$ is a Nisnevich-local and $\aone$--invariant pointed simplicial presheaf on $\Sm_k$, the sheaf $a_{\Nis}\pi_1(\mathscr{X})$ is strongly $\aone$--invariant by \cite[Theorem 6.1]{MField}. Moreover, the map $a_{\Zar}\pi_1(\mathscr{X}) \to a_{\Nis}\pi_1(\mathscr{X})$ is an isomorphism by \cite[Corollary 6.9(2)]{MField} (the standing assumption that $a_{\Nis}\pi_0(\mathscr X)$ is trivial is not used in the proof).

By Theorems~\ref{thm:Gtorsors}(i) and \ref{thm:hoinvisotropic}, under the stated hypotheses on $k$, the simplicial presheaf $R_{\Zar}\Singaone B_{\Nis} G$ is Nisnevich-local and $\A^1$--invariant.  Applying the results of the previous paragraph to the simplicial presheaf
\[
\mathscr X=\mathbf R\Omega^n R_{\Zar}\Singaone B_{\Nis}G,
\]
we conclude that
\[
a_{\Zar}\pi_{n+1}(\Singaone B_{\Nis}G)
\]
is a strongly $\A^1$--invariant Nisnevich sheaf of groups for any $n\geq 0$. By Corollary~\ref{cor:SingOmega}, the map
\[
\pi_{n}(\Singaone \mathbf R\Omega B_{\Nis}G) \longrightarrow \pi_{n+1}(\Singaone B_{\Nis}G)
\]
is an isomorphism on affines, and hence it becomes an isomorphism after Zariski sheafification.  Finally, we conclude the proof by observing that $G\simeq \mathbf R\Omega B_{\Nis}G$ by Lemma~\ref{lem:BG} (iii).
\end{proof}

\begin{rem}
\label{rem:strictaoneinvariance}
We note that the results from \cite[Chapter 6]{MField} used in the proof of Theorem~\ref{thm:wha1invariance} do not require $k$ to be perfect. If the base field $k$ is in addition perfect, then, provided $a_{\Zar}\pi_n(\Singaone G)$ is abelian, we can use \cite[Theorem 5.46]{MField} to conclude that it is strictly $\A^1$--invariant.  The assumption that $k$ is infinite in the above statement only appears because of our appeal to Theorem~\ref{thm:hoinvisotropic}.  To remove this restriction, we would need homotopy invariance for torsors under isotropic reductive groups over finite fields.
\end{rem}

If $G$ is a reductive $k$--group, we can define $G(k)^+$ to be the normal subgroup of $G(k)$ generated by the $k$--points of subgroups of $G$ isomorphic to $\mathbb{G}_a$.  The Whitehead group of $G$ is defined by the formula
\[
W(k,G):=G(k)/G(k)^+;
\]
we refer the reader to P Gille's survey \cite{GilleBourbaki} for more details about Whitehead groups.  In particular, Tits showed that $W(k,G)$ detects whether $G(k)$ is projectively simple.  Results of Margaux allow us to connect non-stable $K_1$--functors (as above) with Whitehead groups.  More precisely, one has the following result.

\begin{prop}
\label{prop:margaux}
Suppose $k$ is an infinite field, and $G$ is an isotropic reductive $k$--group (in the sense of \textup{Definition~\ref{defn:isotropic}}).  For any finitely generated separable extension $L/k$, there are canonical isomorphisms
\[
\pi_0(\Singaone G)(L)\cong W(L,G).
\]
functorial with respect to field extensions.  Moreover, the assignment $L \mapsto W(L,G)$ extends to a strongly $\aone$--invariant sheaf on $\Sm_k$.
\end{prop}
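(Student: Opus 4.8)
The plan is to treat the two assertions in turn. For the isomorphism $\pi_0(\Singaone G)(L)\cong W(L,G)$, I would first unwind the left-hand side: since $G$ is a group scheme, $\Singaone G$ is a simplicial presheaf of groups, so $\pi_0(\Singaone G)(L)$ is the coequalizer of the two evaluation maps $\mathrm{ev}_0,\mathrm{ev}_1\colon G(L[t])\rightrightarrows G(L)$. A direct check, using the group operations applied to $\aone$--homotopies, shows that $N(L):=\{\,h(1)\mid h\in G(L[t]),\ h(0)=1\,\}$ is a normal subgroup of $G(L)$ and that $\pi_0(\Singaone G)(L)=G(L)/N(L)$; note also that $G_L$ is again isotropic reductive over the infinite field $L$, so $G(L)^{+}$ is defined. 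The inclusion $G(L)^{+}\subseteq N(L)$ is the easy direction: for a subgroup homomorphism $u\colon\ga\hookrightarrow G_L$ and $a\in L$, the element $h(t)=u(ta)$ lies in $G(L[t])$ with $h(0)=1$ and $h(1)=u(a)$, so $u(a)\in N(L)$, and $N(L)$ is normal.

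The reverse inclusion $N(L)\subseteq G(L)^{+}$ is the crux, and here I would invoke Margaux's structure theorem: since $G_L$ is isotropic, there is a product decomposition of the shape $G(L[t])=G(L)\cdot E(L[t])$, where $E$ denotes the elementary subgroup generated by the unipotent radicals of the minimal parabolic subgroups of $G_L$. Writing $h=g_0\cdot e$ with $g_0\in G(L)$ and $e\in E(L[t])$, evaluation at $t=0$ gives $1=h(0)=g_0\,e(0)$, hence $g_0=e(0)^{-1}\in E(L)$, and therefore $h(1)=g_0\,e(1)\in E(L)$. Since for a reductive group $E(L)$ coincides with the subgroup $G(L)^{+}$ generated by the $L$--points of the $\ga$--subgroups of $G_L$ (the relevant unipotent subgroups being split unipotent with vector-group graded pieces, cf.\ \cite[Theorem 5.4.3]{Conrad}), this yields $N(L)=G(L)^{+}$ and hence $\pi_0(\Singaone G)(L)\cong W(L,G)$; functoriality in $L$ is immediate, every step being compatible with the transition maps $G(L)\to G(L')$ and $G(L[t])\to G(L'[t])$ induced by $L\subseteq L'$. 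I expect the appeal to Margaux's theorem to be the main obstacle --- it is precisely what converts ``$\aone$--homotopic to the identity'' into ``lies in $G(L)^{+}$'' --- with a secondary point being the identification of $E(L)$ with the $\ga$--generated subgroup $G(L)^{+}$.

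For the second assertion, set $\mathbf W_G:=a_{\Zar}\pi_0(\Singaone G)$. By Theorem~\ref{thm:isotropicaonelocal}, $G$ is $\aone$--naive, so by Proposition~\ref{prop:A1naive} the simplicial presheaf $R_{\Zar}\Singaone G$ is Nisnevich-local and $\aone$--invariant and agrees with $\Singaone G$ on smooth affine schemes; and by Theorem~\ref{thm:wha1invariance}(i) and (ii) applied with $n=0$, $\mathbf W_G$ is a strongly $\aone$--invariant Nisnevich sheaf of groups. It remains to recognise its values on function fields: for $L$ a finitely generated separable extension of $k$, realised as the function field of a smooth integral $k$--scheme, the stalk of $\mathbf W_G$ at the generic point is, since sheafification preserves stalks, the filtered colimit $\colim_U\pi_0(\Singaone G)(U)$ over affine models $U$ of $L$; as $\Singaone G$ is built from the finitely presented scheme $G$, this colimit equals $\pi_0(\Singaone G)(L)$, which is $W(L,G)$ by the first part. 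Thus $L\mapsto W(L,G)$ is the restriction to function fields of the strongly $\aone$--invariant sheaf $\mathbf W_G$ on $\Sm_k$, as claimed.
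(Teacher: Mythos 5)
Your proposal is correct and takes essentially the same route as the paper: the field-level identification $\pi_0(\Singaone G)(L)\cong W(L,G)$ is in both cases delegated to Margaux's theorem (the paper quotes it directly, while you use it in the equivalent form $G(L[t])=G(L)\cdot E(L[t])$ together with the standard Borel--Tits identification $E(L)=G(L)^{+}$ and the routine bookkeeping with $N(L)$), and the second assertion is deduced from Theorem~\ref{thm:wha1invariance} exactly as in the paper, your stalk/colimit computation merely making explicit what the paper leaves implicit.
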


\begin{proof}
The first statement follows from Margaux \cite[Theorem 3.10]{Margaux} (see also Gille \cite[\S 4.3]{GilleBourbaki}) and only requires $G$ to be isotropic in the sense of Borel \cite[Definition V.20.1]{Borel}.  The second statement follows from the strong $\aone$--invariance of $a_{\Zar}\pi_0(\Singaone G)$ established in Theorem~\ref{thm:wha1invariance}(2).
\end{proof}

Whitehead groups are also related to arithmetic questions, e.g., regarding $R$--equivalence in $G(k)$ (see Gille \cite[\S 7]{GilleBourbaki} for a discussion of $R$--equivalence in the context under consideration).

\begin{cor}
\label{cor:structureofpi0}
Let $k$ be an infinite field and $G$ a semisimple simply-connected absolutely almost simple isotropic $k$--group, and set $\mathbf{G} := a_{\Zar}\pi_0(\Singaone G)$.  The following statements hold:
\begin{itemize}
\item[(i)] for any finitely generated separable extension $L/k$, there is an isomorphism of the form ${\mathbf G}(L) \cong G(L)/R$,
\item[(ii)] the contracted sheaf $\mathbf{G}_{-1}$ is trivial, and
\item[(iii)] if $k$ is furthermore perfect, and $G$ has classical type, then $\mathbf{G}$ is strictly $\aone$--invariant.
\end{itemize}
\end{cor}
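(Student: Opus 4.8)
The plan is to establish the three parts in order, deducing (i) and (iii) quickly from results already in hand and concentrating the work in (ii).

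\emph{Part (i).} Since $\Spec L$ has no nontrivial Zariski covering sieves, Zariski sheafification is harmless on a field: $\mathbf G(L)=\pi_0(\Singaone G)(L)$, which by Proposition~\ref{prop:margaux} is the Whitehead group $W(L,G)=G(L)/G(L)^+$. Because $G$ is semisimple, simply connected, absolutely almost simple and isotropic, the Kneser--Tits theory for such groups (see Gille \cite{GilleBourbaki} and the references there) identifies $G(L)^+$ with the normal subgroup $RG(L)$ of elements $R$-equivalent to $1$, so that $W(L,G)\isomto G(L)/R$; all the maps are natural in $L$, which gives (i).

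\emph{Part (ii).} By Morel, the contraction of a strongly $\aone$-invariant sheaf is again strongly $\aone$-invariant, hence unramified; thus $\mathbf G_{-1}=0$ as soon as $\mathbf G_{-1}(L)=0$ for every finitely generated separable $L/k$. Fix such an $L$; note $L(t)/k$ is again finitely generated and separable, so Proposition~\ref{prop:margaux} applies to it as well. Morel's description of the contraction (using $\aone$-invariance of $\mathbf G$ to identify $\mathbf G(\aone_L)$ with $\mathbf G(L)$, and unramifiedness to embed everything into $\mathbf G(L(t))=G(L(t))/R$) presents $\mathbf G_{-1}(L)$ as the quotient
\[
\mathbf G_{-1}(L)\;\cong\;\mathbf G(\mathbf{G}_{m,L})\big/\mathbf G(L),
\]
the numerator being the subgroup of $G(L(t))/R$ of classes unramified at every closed point of $\mathbf{G}_{m,L}$. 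Since $G$ is $\aone$-naive (Theorem~\ref{thm:isotropicaonelocal}), $\mathbf G(\mathbf{G}_{m,L})=\pi_0(\Singaone G)(L[t,t^{-1}])$, so the claim $\mathbf G_{-1}(L)=0$ is exactly $\mathbf{G}_m$-invariance of $\pi_0\Singaone G$ for this honestly semisimple, simply connected, absolutely almost simple $G$: every $\aone$-naive class of $L[t,t^{-1}]$-points of $G$ is constant. I would derive this from the Bass--Heller--Swan-type behaviour of non-stable $K_1$ over Laurent polynomial rings: the only potentially ``new'' contribution over $L[t,t^{-1}]$ is a negative-$K$-theory summand that is detected by the reduced norm, hence absent from the ``determinant one'' part; see Gille \cite{GilleBourbaki} and Wendt \cite{WendtChevalley}. (This step genuinely uses that $G$ has no central torus: for $G=\mathbf{G}_m$ the contraction is the nonzero sheaf $\underline{\Z}$.)

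\emph{Part (iii).} For $G$ of classical type the group $G(L)/R$ is abelian for every finitely generated separable $L/k$: for inner type $A$ it is the reduced Whitehead group $SK_1$ of a central simple algebra, for type ${}^2\!A$ its unitary analogue, for type $C$ one has $G(L)/R=1$, and for types $B$ and $D$ the corresponding computations for spinor groups are likewise abelian; see Gille \cite{GilleBourbaki} for a survey with references. Since $k$ is perfect, $\mathbf G$ is unramified and hence injects into its sections over generic points of smooth integral $k$-schemes, which by (i) are abelian; thus $\mathbf G$ is a sheaf of abelian groups. Being strongly $\aone$-invariant (Theorem~\ref{thm:wha1invariance}) over the perfect field $k$, it is then strictly $\aone$-invariant by Morel \cite[Theorem 5.46]{MField} (cf. Remark~\ref{rem:strictaoneinvariance}).

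I expect the crux to be the $\mathbf{G}_m$-invariance statement used in (ii): identifying $\mathbf G_{-1}(L)$ with the quotient $\mathbf G(\mathbf{G}_{m,L})/\mathbf G(L)$ is routine given Morel's theory, but pinning down the vanishing --- equivalently, a Bass--Heller--Swan decomposition for non-stable $K_1$ of isotropic semisimple simply connected groups together with the triviality of its negative-$K$-theory part --- is where the real input lies, and is the place where honest semisimplicity (rather than mere reductivity and isotropy) is essential.
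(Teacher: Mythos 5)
Your parts (i) and (iii) are essentially the paper's own argument: (i) is Proposition~\ref{prop:margaux} together with the Kneser--Tits identification $W(L,G)\cong G(L)/R$ for semisimple simply connected absolutely almost simple isotropic groups (\cite[Th\'eor\`eme 7.2]{GilleBourbaki}), and (iii) is the same reduction the paper makes --- abelianness of $G(L)/R$ in classical type (Chernousov--Merkurjev, see \cite[Th\'eor\`eme 7.7]{GilleBourbaki}), unramifiedness of $\mathbf G$ to check abelianness on fields, then \cite[Theorem 5.46]{MField}.

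Part (ii), however, has a genuine gap at exactly the point you flag as the crux. Your reduction is fine: $\mathbf G_{-1}$ is strongly $\aone$--invariant by \cite[Lemma 2.32]{MField}, hence unramified, so everything injects into sections over function fields and one must show that a class over $\gm{}_{,L}$ killed by evaluation at $t=1$ dies in $\mathbf G(L(t))\cong G(L(t))/R$. But you then invoke a ``Bass--Heller--Swan decomposition for non-stable $K_1$ over Laurent polynomial rings whose negative-$K$-theory summand is detected by the reduced norm.'' No such decomposition is available in the generality needed here; the reduced norm only makes sense for inner forms of type $A$; and, more seriously, the vanishing of the putative ``negative'' summand is essentially a restatement of $\mathbf G_{-1}=0$, so the argument is circular as written. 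The input that actually closes the gap, and the one the paper uses, is the invariance of Whitehead groups under purely transcendental extensions: $W(F,G)\to W(F(t),G)$ is a bijection, i.e., $\mathbf G(k(U))\to\mathbf G(k(U\times\gm{}))$ is bijective (\cite[Th\'eor\`eme 5.8]{GilleBourbaki}). Granting that, any $\xi\in\mathbf G_{-1}(U)\subset\mathbf G(U\times\gm{})$ maps to a class in $\mathbf G(k(U)(t))$ which is constant, i.e., comes from $\mathbf G(k(U))$, and evaluation at $t=1$ shows that class is trivial; unramifiedness then gives $\xi=0$. Replacing your BHS step by this citation completes (ii); note that semisimplicity and simple connectedness enter exactly there (your sanity check that $\gm{}$ itself has nontrivial contraction is the right one).
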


\begin{proof}
The first statement follows from Proposition~\ref{prop:margaux} and \cite[Th\'eor\`eme 7.2]{GilleBourbaki}.

For the second statement, recall that $\mathbf{G}_{-1}(U) = \ker((id, 1)^*: \mathbf{G}(U\times\gm{}) \to \mathbf{G}(U))$.  As $\mathbf{G}$ is strongly $\aone$--invariant by Theorem~\ref{thm:wha1invariance}, $\mathbf{G}_{-1}$ is also strongly $\aone$--invariant by Morel \cite[Lemma 2.32]{MField}.  In particular, it is an unramified sheaf, which implies that the map $\mathbf{G}(X) \to \mathbf{G}(k(X))$ is injective for any irreducible smooth scheme $X$.  By \cite[Theorem 5.8]{GilleBourbaki}, we conclude that $\mathbf{G}(k(U)) \to \mathbf{G}(k(U \times \gm{}))$ is a bijection and thus that $\mathbf{G}_{-1}(U)$ is trivial, for any $U\in\Sm_k$.

For the final statement, if $k$ is furthermore perfect, it suffices by \cite[Theorem 5.46]{MField} to show that $\mathbf{G}$ is an abelian group valued functor.  Because $\mathbf{G}$ is unramified, it suffices to check abelianness on sections over extensions of the base field.  By Point (i), if $G$ has classical type, this follows from a result of Chernousov--Merkurjev \cite[Th\'eor\`eme 7.7]{GilleBourbaki}.
\end{proof}

\begin{rem}
The statement $\mathbf{G}_{-1} = 0$ of Corollary~\ref{cor:structureofpi0}(ii) is equivalent to the assertion that $\mathbf{G}$ is a birational sheaf.  If $G$ is not simply-connected, then the sheaf $\mathbf{G}$ is not, in general, birational.  For example suppose $G$ is a split semisimple group having non-trivial algebraic fundamental group $\Pi$ (in the sense of Chevalley groups).  If we let $\mathscr{H}^1_{\et}(\Pi)$ be the Nisnevich sheaf associated with the presheaf $U \mapsto H^1_{\et}(U,\Pi)$, then $\mathbf{G} \cong \mathscr{H}^1_{\et}(\Pi)$, which is not birational.
\end{rem}

\subsection{Nilpotence for non-stable $K_1$ functors}
\label{ss:nilpotence}
In this section, we include one more sample application of our results: we give a uniform proof of some nilpotence results for non-stable $K_1$--functors discussed in the previous section; such nilpotence results have been studied for instance by Bak \cite{BakNilpotent} and Bak--Hazrat--Vavilov \cite{BakHazratVavilov}.  The main result of this section is Theorem~\ref{thm:nilpotentbyabelian} which solves \cite[Problem 6]{BakHazratVavilov} in a number of cases of interest (see Remark~\ref{rem:bhvproblem6} for more details).  The approach we pursue has the benefit that it is conceptually simple (modelled on classical topological results) and applies to rather general isotropic reductive $k$--groups.  The tradeoff to this generality is that unlike \cite{BakHazratVavilov} we are forced to restrict attention to smooth $k$--algebras with $k$ an infinite field.

We use the following notation/terminology.   If $(\mathscr{X},x)$ is a pointed simplicial presheaf on $\Sm_k$, we will say that $\mathscr{X}$ is Nisnevich-connected if $a_{\Nis}\pi_0(\mathscr{X})$ is trivial and, given an integer $n \geq 1$, we will say that $\mathscr{X}$ is Nisnevich $n$--connected if $a_{\Nis} \pi_i(\mathscr{X},x)$ is trivial for $i \leq n$.

Now, suppose $\mathscr{G}$ is a simplicial presheaf of group-like $h$--spaces on $\Sm_k$ ($h$--group for short) pointed by the identity.  In that case, there is an induced morphism $\mathscr{G} \to a_{\Nis}\pi_0\mathscr{G}$; this morphism is a morphism of $h$--groups.  Write $\mathscr{G}^0$ for the homotopy fiber of $\mathscr{G}$, so that there is a homotopy fiber sequence of the form
\[
\mathscr{G}^0 \longrightarrow \mathscr{G} \longrightarrow a_{\Nis}\pi_0\mathscr{G}.
\]
By construction, $\mathscr{G}^0$ is a Nisnevich-connected $h$--group.  Using this notation, we can adapt arguments of Whitehead \cite[Corollary 2.12]{Whitehead} to establish an abstract nilpotence result.

\begin{prop}
\label{prop:nilpotence}
Assume $k$ is a Noetherian ring of finite Krull dimension, and suppose $\mathscr{G}$ is a Nisnevich-local simplicial presheaf of $h$--groups on $\Sm_k$ (pointed by the identity).
\begin{itemize}
\item[(i)] For any $X \in \Sm_k$, there is an exact sequence of groups of the form
\[
1 \longrightarrow [X,\mathscr{G}^0] \longrightarrow [X,\mathscr{G}] \longrightarrow a_{\Nis}\pi_0(\mathscr{G})(X).
\]
\item[(ii)] If $X \in \Sm_k$ has Krull dimension $\leq d$, then $[X,\mathscr{G}^0]$ is nilpotent of class $\leq d$.
\end{itemize}
\end{prop}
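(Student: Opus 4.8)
The plan is to prove (i) directly from the defining fiber sequence of $\mathscr{G}^0$, and to prove (ii) by transcribing into the Nisnevich-local homotopy category (in which $[X,-]$ is understood throughout) the classical argument of Whitehead \cite[Corollary 2.12]{Whitehead}: a Nisnevich-connected $h$-group has a Nisnevich-local Postnikov tower built from principal fibrations, this tower is effectively \emph{finite} over a base of bounded cohomological dimension, and each principal stage contributes a central subgroup to $[X,\mathscr{G}^0]$. For (i): since $\mathscr{G}^0$ is the homotopy fiber of $q\colon\mathscr{G}\to a_{\Nis}\pi_0\mathscr{G}$, there is an extended homotopy fiber sequence $\mathbf{R}\Omega(a_{\Nis}\pi_0\mathscr{G})\to\mathscr{G}^0\to\mathscr{G}\stackrel{q}{\longrightarrow}a_{\Nis}\pi_0\mathscr{G}$; as $a_{\Nis}\pi_0\mathscr{G}$ is $0$-truncated its loop object is contractible, and because $\mathscr{G}$ and $\mathscr{G}^0$ are $h$-groups, applying $[X,-]$ gives an exact sequence of groups
\[
1\longrightarrow[X,\mathscr{G}^0]\longrightarrow[X,\mathscr{G}]\longrightarrow[X,a_{\Nis}\pi_0\mathscr{G}]=(a_{\Nis}\pi_0\mathscr{G})(X),
\]
the last equality holding because $a_{\Nis}\pi_0\mathscr{G}$ is a discrete Nisnevich sheaf of groups.

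For (ii), I would write $\tau_{\leq n}$ for Nisnevich-local truncation and set $\mathbf{A}_n:=a_{\Nis}\pi_n(\mathscr{G}^0)$, noting that $\mathbf{A}_n$ is abelian for every $n\geq 1$ since $\mathscr{G}^0$ is an $h$-space. Because $\mathscr{G}^0$ is then a simple object --- its $\mathbf{A}_1$ acts trivially on each $\mathbf{A}_n$ --- its Nisnevich-local Postnikov tower consists of principal fibrations, with $\tau_{\leq n}\mathscr{G}^0$ the homotopy fiber of a $k$-invariant map $\tau_{\leq n-1}\mathscr{G}^0\to K(\mathbf{A}_n,n+1)$ and $\tau_{\leq 0}\mathscr{G}^0=\ast$ by Nisnevich-connectedness. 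Since $k$ is Noetherian of finite Krull dimension, every object of $\Sm_k$ has finite Nisnevich cohomological dimension, so the Nisnevich $\infty$-topos over $k$ has convergent Postnikov towers and $\mathscr{G}^0\simeq\holim_n\tau_{\leq n}\mathscr{G}^0$. Fixing $X$ with $\dim X\leq d$, the Nisnevich cohomological dimension of $X$ is at most $d$, so $H^j_{\Nis}(X,\mathscr{F})=0$ whenever $j>d$ and $\mathscr{F}$ is an abelian Nisnevich sheaf; applying $[X,-]$ to the principal fibrations and using $[X,K(\mathbf{A}_n,m)]=H^m_{\Nis}(X,\mathbf{A}_n)$ yields exact sequences of groups
\[
H^n_{\Nis}(X,\mathbf{A}_n)\longrightarrow[X,\tau_{\leq n}\mathscr{G}^0]\longrightarrow[X,\tau_{\leq n-1}\mathscr{G}^0]\longrightarrow H^{n+1}_{\Nis}(X,\mathbf{A}_n),
\]
so $[X,\tau_{\leq n}\mathscr{G}^0]\to[X,\tau_{\leq n-1}\mathscr{G}^0]$ is an isomorphism for every $n>d$. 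The resulting eventual constancy of the tower $\{[X,\tau_{\leq n}\mathscr{G}^0]\}_n$ also kills the $\lim^1$ term in the Milnor sequence for $[X,\holim_n\tau_{\leq n}\mathscr{G}^0]$, giving a canonical isomorphism $[X,\mathscr{G}^0]\cong[X,\tau_{\leq d}\mathscr{G}^0]$.

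It then remains to analyze the \emph{finite} tower of $d$ group homomorphisms
\[
[X,\mathscr{G}^0]=[X,\tau_{\leq d}\mathscr{G}^0]\longrightarrow[X,\tau_{\leq d-1}\mathscr{G}^0]\longrightarrow\cdots\longrightarrow[X,\tau_{\leq 0}\mathscr{G}^0]=1.
\]
Since the $n$-th stage is a principal fibration of $h$-groups with fiber $K(\mathbf{A}_n,n)\simeq\mathbf{R}\Omega K(\mathbf{A}_n,n+1)$, Whitehead's Eckmann--Hilton argument \cite{Whitehead} shows that the kernel of $[X,\tau_{\leq n}\mathscr{G}^0]\to[X,\tau_{\leq n-1}\mathscr{G}^0]$ --- a quotient of $H^n_{\Nis}(X,\mathbf{A}_n)$ --- is a \emph{central} subgroup of $[X,\tau_{\leq n}\mathscr{G}^0]$. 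Taking preimages in $[X,\mathscr{G}^0]$ of these kernels then produces a chain of subgroups $1=N_0\subseteq N_1\subseteq\cdots\subseteq N_d=[X,\mathscr{G}^0]$ with $N_j/N_{j-1}$ central in $[X,\mathscr{G}^0]/N_{j-1}$ for each $j$; hence $[X,\mathscr{G}^0]$ is nilpotent of class at most $d$.

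The hard part will be the convergence input --- that $\mathscr{G}^0$ is recovered as the homotopy limit of its Nisnevich-local Postnikov tower and that this survives evaluation at the finite-dimensional scheme $X$ --- which is precisely where the finite Krull dimension hypotheses are needed, via the bound of Nisnevich cohomological dimension by Krull dimension and the consequent Postnikov completeness of the Nisnevich topos over $k$. The only other delicate point is that the Postnikov stages genuinely are principal fibrations \emph{of $h$-groups}; this follows because an $h$-group is simple and because truncations of $h$-groups, together with the relevant Eilenberg--MacLane objects, are again $h$-groups, so that Whitehead's centrality argument transcribes verbatim.
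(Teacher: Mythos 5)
Your part (i) is the same as the paper's (the long exact sequence of the fiber sequence plus $0$--truncatedness of $a_{\Nis}\pi_0\mathscr{G}$), but your part (ii) takes a genuinely different route. The paper does not use Postnikov towers at all: it reduces to $\mathscr{G}=\mathscr{G}^0$ Nisnevich-connected, observes (stalkwise) that $\mathscr{G}^{\sma n}$ is highly connected, invokes obstruction theory (Morel \cite[Lemma B.5]{MField}) to get $[X,\mathscr{G}^{\sma n}]=\ast$ when $\dim X$ is small relative to $n$, and then uses Whitehead's device that every iterated commutator in $[X,\mathscr{G}]$ factors as $X\to\mathscr{G}^{\sma n}\to\mathscr{G}$ (the strict identity of the $h$--group is what makes the factorization through the smash legitimate). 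This is shorter and needs no convergence input: the finite Krull dimension of $k$ enters only through the cohomological-dimension bound in the obstruction-theory step. Your argument instead runs through the Nisnevich-local Postnikov tower with principal stages, Postnikov completeness of the Nisnevich topos over a Noetherian finite-dimensional base, a Milnor sequence, and centrality of the stage kernels; this buys a central series of length exactly $d$ (and hence the same class bound), but at the cost of substantially more machinery, including a small slip in the Milnor-sequence step (the $\lim^1$ term involves the tower $[X,\mathbf{R}\Omega\tau_{\leq n}\mathscr{G}^0]$, not $[X,\tau_{\leq n}\mathscr{G}^0]$; it is still eventually constant by the same cohomological-dimension argument, so this is repairable).

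The one step you should justify more carefully is the centrality claim: the Eckmann--Hilton argument only shows that the kernel of $[X,\tau_{\leq n}\mathscr{G}^0]\to[X,\tau_{\leq n-1}\mathscr{G}^0]$, being a quotient of $H^n_{\Nis}(X,\mathbf{A}_n)$, is \emph{abelian}; it does not by itself place that kernel in the center of $[X,\tau_{\leq n}\mathscr{G}^0]$. The correct justification is, in effect, the paper's own trick applied stagewise: the commutator map $K(\mathbf{A}_n,n)\times\tau_{\leq n}\mathscr{G}^0\to\tau_{\leq n}\mathscr{G}^0$ kills the wedge (strict identity again) and so factors through $K(\mathbf{A}_n,n)\sma\tau_{\leq n}\mathscr{G}^0$, which is $n$--connected, while the target is $n$--truncated; hence the factored map is trivial and the kernel is central, with no dimension hypothesis on $X$. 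With that lemma supplied (and the standard facts you assert about principal Postnikov towers for simple objects and Postnikov convergence in the Nisnevich topos), your proof is correct; but note that once you are invoking the smash-commutator factorization anyway, the paper's direct argument makes the entire tower apparatus unnecessary.
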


\begin{proof}
Point (i) is immediate from the long exact sequence of maps into a homotopy fiber sequence and the fact that $a_{\Nis}\pi_0(\mathscr{G})$ is $0$--truncated.

For Point (ii), it suffices to assume $\mathscr{G} = \mathscr{G}^0$ is Nisnevich-connected.  In that case, $\mathscr{G}^{\sma n}$ is Nisnevich $n$--connected. Indeed, this follows from the corresponding connectivity estimate for smash products of simplicial sets by checking on stalks.  Now, a straightforward obstruction theory argument (see Morel \cite[Lemma B.5]{MField}) using the connectivity estimate we just mentioned shows that $[X,\mathscr{G}^{\sma n}] = \ast$ if $\dim X \leq n$.  To conclude, we simply observe that every $n$--fold commutator in $[X,\mathscr{G}]$ factors as $X \to \mathscr{G}^{\sma n} \to \mathscr{G}$ (here, we use the assumption that $\mathscr{G}$ is an $h$--group and thus has a strict identity).
\end{proof}

\begin{rem}
The result above is rather general.  Indeed, as is evident from the proof, it holds for simplicial $h$--group objects in the local homotopy theory of simplicial presheaves on a site for which Postnikov towers converge.
\end{rem}

Now, suppose $G$ is an isotropic reductive $k$--group in the sense of Definition \ref{defn:isotropic}.  Following Petrov and Stavrova \cite{PetrovStavrova}, for any commutative $k$--algebra $R$ and any parabolic $k$--subgroup $P \subset G$, we define the elementary subgroup $E_P(R)$ as the subgroup of $G(R)$ generated by the $R$--points of the unipotent radical of $P$ and the $R$--points of the unipotent radical of its opposite.  A priori $E_P(R)$ depends on $P$ and $E_P(R)$ need not be a normal subgroup of $G$.  However, \cite[Theorem 1]{PetrovStavrova} guarantees that if each semi-simple normal subgroup of $G$ has rank $\geq 2$, then $E_P(R)$ is both independent of $P$ and normal in $G(R)$; under these hypotheses we define $E(R) := E_P(R)$ for any choice of proper parabolic and define $K^G_1(R) := G(R)/E(R)$.

We can also consider $G^0(R) \subset G(R)$, the subset of $G(R)$ consisting of matrices $g$ for which there exists $g(t) \in G(R[t])$ with $g(0) = 1$ and $g(1) = g$; this subgroup is evidently normal.  By construction $E_P(R) \subset G^0(R)$ and $KV^G_1(R) = G(R)/G^0(R)$.   Therefore there is a short exact sequence of groups
\[
1 \longrightarrow G^0(R)/E(R) \longrightarrow K^G_1(R) \longrightarrow KV^G_1(R) \longrightarrow 1.
\]

\begin{thm}
\label{thm:nilpotentbyabelian}
Suppose $k$ is an infinite field, $G$ is an isotropic reductive $k$--group in the sense of \textup{Definition~\ref{defn:isotropic}} and $R$ is a smooth $k$--algebra of dimension $d$.
\begin{enumerate}
\item[(i)] If for every finitely generated separable extension $L/k$ the Whitehead group $W(L,G)$ is trivial (abelian), then $KV^G_1(R)$ is (an extension of an abelian group by) a nilpotent group of class $\leq d$.
\item[(ii)] If furthermore $k$ is perfect, for every finitely generated separable extension $L/k$ the Whitehead group $W(L,G)$ is trivial (abelian), and every semi-simple normal subgroup of $G$ has rank $\geq 2$, then $K^G_1(R)$ is (an extension of an abelian group by) a nilpotent group of class $\leq d$.
\end{enumerate}
\end{thm}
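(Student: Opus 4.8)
The plan is to deduce both statements from the abstract nilpotence result Proposition~\ref{prop:nilpotence}, applied to a suitable Nisnevich-local model of $G$ whose sheaf of path components is governed, via Proposition~\ref{prop:margaux}, by Whitehead groups.

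For part (i): since $G$ is $\A^1$--naive by Theorem~\ref{thm:isotropicaonelocal}, the Zariski-local replacement $\mathscr G:=R_{\Zar}\Singaone G$ is Nisnevich-local and $\A^1$--invariant by Proposition~\ref{prop:A1naive}; as $\Singaone G$ is a simplicial presheaf of groups, $\mathscr G$ may be taken to be a simplicial sheaf of groups, hence an $h$--group pointed by the identity. Because $\A^1$--naivety of $G$ also gives $\mathscr G(\Spec R)\simeq\Singaone G(\Spec R)$ for $\Spec R\in\Sm_k^\aff$, we have $[\Spec R,\mathscr G]\cong\pi_0(\Singaone G)(\Spec R)=KV^G_1(R)$. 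Feeding $\mathscr G$ into Proposition~\ref{prop:nilpotence} and writing $\mathbf W:=a_{\Nis}\pi_0(\mathscr G)$, we obtain, for $R$ of Krull dimension $d$, an exact sequence of groups
\[
1\longrightarrow [\Spec R,\mathscr G^0]\longrightarrow KV^G_1(R)\longrightarrow \mathbf W(\Spec R)
\]
in which $[\Spec R,\mathscr G^0]$ is nilpotent of class $\leq d$. It then remains to control $\mathbf W$. Using Theorem~\ref{thm:wha1invariance}(i) and the Zariski-local equivalence $\Singaone G\to\mathscr G$ one identifies $\mathbf W=a_{\Zar}\pi_0(\Singaone G)$, which by Theorem~\ref{thm:wha1invariance}(ii) is a strongly $\A^1$--invariant Nisnevich sheaf, and whose sections over a finitely generated separable field extension $L/k$ are $W(L,G)$ by Proposition~\ref{prop:margaux}. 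A strongly $\A^1$--invariant sheaf is unramified (Morel \cite{MField}), so $\mathbf W(X)$ injects into the product of the $\mathbf W(k(x))$ over the generic points $x$ of the components of any smooth $X$; since those residue fields are finitely generated and separable over $k$, the hypothesis that every $W(L,G)$ is trivial (resp.\ abelian) forces $\mathbf W$ to be the trivial (resp.\ an abelian) sheaf. In the trivial case the exact sequence gives $KV^G_1(R)\cong[\Spec R,\mathscr G^0]$, nilpotent of class $\leq d$; in the abelian case $KV^G_1(R)$ is an extension of a subgroup of the abelian group $\mathbf W(\Spec R)$ by $[\Spec R,\mathscr G^0]$, which is exactly the assertion of (i).

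For part (ii): I would invoke the short exact sequence $1\to G^0(R)/E(R)\to K^G_1(R)\to KV^G_1(R)\to 1$ recalled just before the statement, so that by part (i) it suffices to show the kernel $G^0(R)/E(R)$ is trivial, i.e.\ that $K^G_1(R)=KV^G_1(R)$ for every smooth $k$--algebra $R$. This is precisely the $\A^1$--invariance of the non-stable functor $K^G_1$ on $\Sm_k^\aff$: granting that $K^G_1(R)\to K^G_1(R[t])$ is a bijection, applying the two evaluations $t\mapsto 0,1$ (which both split the inclusion $R\hookrightarrow R[t]$) to a polynomial path $g(t)\in G(R[t])$ with $g(0)=1$ shows that every $g\in G^0(R)$ already lies in $E(R)$, whence $G^0(R)=E(R)$. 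Under the hypotheses of (ii) --- $G$ isotropic with every semisimple normal subgroup of rank $\geq 2$ (which is what makes $E(R)$ a well-defined normal subgroup of $G(R)$ independent of the parabolic, cf.\ \cite{PetrovStavrova}), and $R$ smooth, hence regular, over the infinite perfect field $k$ --- this homotopy invariance of $K^G_1$ is available from the work of Stavrova (see also \cite{PetrovStavrova, VoelkelWendt}); alternatively one can avoid citing it by combining the triviality of $G^0(R)/E(R)$ for $R$ a regular local ring with a local-to-global (``Quillen patching'') principle for the elementary subgroup in the spirit of Section~\ref{ss:localtoglobal}. Once $K^G_1(R)=KV^G_1(R)$, the conclusion of (ii) is immediate from (i). I expect the main obstacle to be exactly this last input: it is the one step that falls outside the $\A^1$--homotopy machinery of this paper and genuinely requires group-theoretic information about the elementary subgroup $E_P$ (and it is where the rank and perfectness hypotheses of (ii) are used); everything else, including the passage from (i) to (ii), is formal once Proposition~\ref{prop:nilpotence}, Theorem~\ref{thm:wha1invariance}, and Proposition~\ref{prop:margaux} are in hand.
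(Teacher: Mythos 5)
Your proposal is correct and follows essentially the same route as the paper: for (i), apply Proposition~\ref{prop:nilpotence} to $\mathscr G=R_{\Zar}\Singaone G$ (Nisnevich-local with $KV^G_1(R)=[\Spec R,\mathscr G]$ by Theorem~\ref{thm:isotropicaonelocal} and Proposition~\ref{prop:A1naive}), with $a_{\Nis}\pi_0(\mathscr G)$ controlled via Theorem~\ref{thm:wha1invariance} and Proposition~\ref{prop:margaux}; for (ii), use the displayed exact sequence together with Stavrova's homotopy-invariance theorem for $K_1^G$ to identify $K_1^G(R)$ with $KV_1^G(R)$. Your extra details (the unramifiedness argument passing from field sections to sheaf triviality/abelianness, and the evaluation-at-$t=0,1$ argument extracting $G^0(R)=E(R)$ from homotopy invariance) merely spell out steps the paper leaves implicit.
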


\begin{proof}
	Let $\mathscr G=R_{\Zar}\Singaone G$.
According to Theorem~\ref{thm:wha1invariance}, the Nisnevich sheaf $a_{\Nis}\pi_0(\mathscr G)$ is strongly $\aone$--invariant.  By Proposition~\ref{prop:margaux} the group of sections $a_{\Nis}\pi_0(\mathscr G)(L)$ over finitely generated extensions $L/k$ coincides with $W(L,G)$.  In particular, the assumption that $W(L,G)$ is trivial (abelian) for every finitely generated separable extension $L/k$ implies that the sheaf $a_{\Nis}\pi_0(\mathscr G)$ is trivial (abelian).

By Theorem~\ref{thm:isotropicaonelocal} and Proposition~\ref{prop:A1naive}, $\mathscr G$ is Nisnevich-local and $KV^G_1(R)=[\Spec R,\mathscr G]$. Point (i) then follows immediately from Proposition~\ref{prop:nilpotence}.

Consider the exact sequence appearing before the statement gives a surjective map $K^G_1(R) \to KV^G_1(R)$.  Under the additional hypotheses in Point (ii), it follows immediately from a result of Stavrova \cite[Theorem 1.3]{Stavorvanonstable} that this surjection is an isomorphism and Point (ii) follows from Point (i).
\end{proof}

\begin{rem}
\label{rem:bhvproblem6}
Combined with known structural results about $W(-,G)$ (viewed as a functor on the category of finitely generated extensions of the base field), the above result solves a problem posed by Bak, Hazrat, and Vavilov \cite[Problem 6]{BakHazratVavilov} in a number of new cases.  For example, in \cite[Th{\'e}or{\`e}me 6.1]{GilleBourbaki}, Gille summarizes results of Chernousov--Platonov that detail situations where $W(-,G)$ is trivial for all finitely generated separable extensions $L/k$.  See Corollary \ref{cor:structureofpi0}(iii) for hypotheses that guarantee $W(-,G)$ is an abelian group valued functor on the category of (e.g., if $G$ has classical type).  Furthermore, it has been conjectured that $W(-,G)$ always takes values in abelian groups.
\end{rem}

\bibliographystyle{gtart}
\bibliography{bundles}

\end{document}